\def\namedlabel#1#2{\begingroup
   \def\@currentlabel{#2}%
   \label{#1}\endgroup}
\theoremstyle{plain}
\newtheorem{theorem}{Theorem}[section]
\newtheorem{lemma}[theorem]{Lemma}
\newtheorem{corollary}[theorem]{Corollary}
\newtheorem{proposition}[theorem]{Proposition}
\theoremstyle{definition}
\newtheorem{definition}[theorem]{Definition}
\newtheorem{example}[theorem]{Example}
\theoremstyle{remark}
\newtheorem{remark}{Remark}
\begin{document}

\title{The hull-kernel topology on residuated lattices}

\author{
\name{Saeed Rasouli\textsuperscript{a}\thanks{CONTACT Saeed Rasouli. Email: srasouli@pgu.ac.ir} and Amin Dehghani\textsuperscript{b}}
\affil{\textsuperscript{a,b}Department of Mathematics, Persian Gulf University, Bushehr, 75169, Iran}
}

\maketitle
\begin{abstract}
The notion of hull-kernel topology on a collection of prime filters in a residuated lattice is introduced and investigated. It is observed that any collection of prime filters is a $T_0$ topological space under the hull-kernel and the dual hull-kernel topologies. It is proved that any collection of prime filters is a $T_1$ space if and only if it is an antichain, and it is a Hausdorff space if and only if it satisfies a certain condition. Some characterizations in which maximal filters forms a Hausdorff space are given. At the end, it is focused on the space of minimal prim filters, and it is shown that this space is a totally disconnected Hausdorff space. This paper is closed by a discussion abut the various forms of compactness and connectedness of this space.
\end{abstract}

\begin{keywords}
residuated lattice; maximal filter; prime filter; minimal prime filter; hull-kernel topology
\end{keywords}
\section{Introduction}

\cite{sto} established a correspondence between the category of distributive lattices and a certain category of topological spaces. He observed that the set of prime ideals of a Boolean algebra forms a topological space in a natural way. A form of Stone's representation theorem for Boolean algebras states that every Boolean algebra is isomorphic to the Boolean algebra of clopen sets of a non-empty compact totally disconnected Hausdorff space (Stone space). This isomorphism forms a category-theoretic duality between the categories of Boolean algebras and Stone spaces. \cite{kis} and \cite{hen} investigated on commutative semigroups with $0$ and commutative rings, respectively, via the space of minimal prime ideals. The space of minimal prime ideals for distributive lattices with $0$ is discussed in \cite{spe,spe2} where the author presented a characterization for various forms of compactness and connectedness. The space of minimal prime ideals of a ring without nilpotent elements is discussed in \cite{tha} and an interesting characterization of Baer rings is given. In \cite{leus0}, the author introduced the notion of Stone topology on the prime and maximal filters in a BL algebra and investigated them. He showed that the set of prime filters is a compact $T_0$ space and the set of maximal prime filters is a compact Hausdorff space under this topology. Recently, the space of minimal prime ideals of a poset is investigated by \cite{mund}.

In this work we study the hull-kernel topology on a collection of prime filters in a residuated lattice. This paper is organized in four sections as follow: In Section \ref{sec2}, some definitions and facts about residuated lattices and topology are recalled, which are used in the next sections. Moreover, notions of maximal, prime, and minimal prime filters are introduced and some characterizations of them are given. In Section \ref{sec3}, the hull and kernel operators on a collection of filters in a residuated lattices are introduced and a Galois connection is established between them (Theorem \ref{clotopo}). It is observed that any collection of prime filters is a $T_0$ topological space under the hull-kernel and the dual hull-kernel topologies (Theorem \ref{t0spacecon}). It is proved that any collection of prime filters is a $T_1$ space if and only if it is an antichain (Theorem \ref{t1spa}), and it is a Hausdorff space if and only if it satisfies a certain condition (Theorem \ref{hauspro}). This section further concentrates on maximal filters and some characterizations in which this collection forms a Hausdorff space. Section \ref{sec4} is focused on the space of minimal prim filters and it is shown that this space is a totally disconnected Hausdorff space (Corollary \ref{minspapro}). This section is closed by a discussion abut the various forms of compactness and connectedness of this space.
\section{Preliminaries}\label{sec2}

In this section we recall some definitions and facts about residuated lattices and topology.
\subsection{Residuated lattices}

This subsection is devoted to recall some definitions, properties and results relative to residuated lattices, which will be used
in the following. The results in the this section are original, excepting those that we cite from other papers.

An algebra $\mathfrak{A}=(A;\vee,\wedge,\odot,\rightarrow,0,1)$ is called a \textit{residuated lattice} if $\ell(\mathfrak{A})=(A;\vee,\wedge,0,1)$ is a bounded lattice, $(A;\odot,1)$ is a commutative monoid and $(\odot,\rightarrow)$ is an adjoint pair. A residuated lattice $\mathfrak{A}$ is called a \textit{MTL algebra} if satisfying the \textit{pre-linearity condition} (denoted by \ref{prel}):
\begin{enumerate}
\item [$(prel)$ \namedlabel{prel}{$(prel)$}] $(x\rightarrow y)\vee(y\rightarrow x)=1$, for all $x,y\in A$.
\end{enumerate}

In a residuated lattice $\mathfrak{A}$, for any $a\in A$, we put $\neg a:=a\rightarrow 0$. It is well-known that the class of residuated lattices is equational \citep{idz}, and so it forms a variety. The properties of residuated lattices were presented in \cite{gal}. For a survey of residuated lattices we refer to \cite{jip}.
\begin{remark}\label{resproposition}\citep[Proposition 2.2]{jip}
Let $\mathfrak{A}$ be a residuated lattice. The following conditions are satisfied for any $x,y,z\in A$:
\begin{enumerate}
  \item [$r_{1}$ \namedlabel{res1}{$r_{1}$}] $x\odot (y\vee z)=(x\odot y)\vee (x\odot z)$;
  \item [$r_{2}$ \namedlabel{res2}{$r_{2}$}] $x\vee (y\odot z)\geq (x\vee y)\odot (x\vee z)$.
  \end{enumerate}
\end{remark}
\begin{example}\label{rex2}
Let $A_6=\{0,a,b,c,d,1\}$ be a lattice whose Hasse diagram is below (see Figure \ref{graph6}).  Define $\odot$ and $\rightarrow$ on $A_7$ as follows:
\begin{eqnarray*}
\begin{array}{l|llllll}
  \odot & 0 & a & b & c & d &  1   \\ \hline
    0   & 0 & 0 & 0 & 0 & 0 &  0  \\
    a   & 0 & a & a & 0 & a &  a  \\
    b   & 0 & a & a & 0 & a &  b  \\
    c   & 0 & 0 & 0 & c & c &  c  \\
    d   & 0 & a & a & c & d &  d  \\
    1   & 0 & a & b & c & d &  1
  \end{array}& \hspace{1cm} &
  \begin{array}{l|llllll}
  \rightarrow & 0 & a & b & c & d &  1   \\ \hline
    0         & 1 & 1 & 1 & 1 & 1 &  1  \\
    a         & c & 1 & 1 & c & 1 &  1  \\
    b         & c & d & 1 & c & 1 &  1  \\
    c         & b & b & b & 1 & 1 &  1  \\
    d         & 0 & b & b & c & 1 &  1  \\
    1         & 0 & a & b & c & d &  1
  \end{array}
\end{eqnarray*}
\begin{figure}[h]
\centering
\includegraphics[scale=.1]{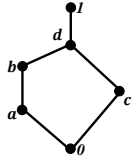}
\caption{The Hasse diagram of $\mathfrak{A}_6$.}
\label{graph6}
\end{figure}
Routine calculation shows that  $\mathfrak{A}_6=(A_6;\vee,\wedge,\odot,\rightarrow,0,1)$ is a residuated lattice.
\end{example}

Let $\mathfrak{A}$ be a residuated lattice. A non-void subset $F$ of $A$ is called a \textit{filter} of $\mathfrak{A}$ if $x,y\in F$ implies $x\odot y\in F$ and $x\vee y\in F$ for any $x\in F$ and $y\in A$. The set of filters of $\mathfrak{A}$ is denoted by $\mathscr{F}(\mathfrak{A})$. A filter $F$ of $\mathfrak{A}$ is called \textit{proper} if $F\neq A$. Clearly, $F$ is a proper filter if and only if $0\notin F$. For any subset $X$ of $A$ the \textit{filter of $\mathfrak{A}$ generated by $X$} is denoted by $\mathscr{F}(X)$. For each $x\in A$, the filter generated by $\{x\}$ is denoted by $\mathscr{F}(x)$ and called \textit{principal filter}. The set of principal filters is denoted by $\mathscr{PF}(\mathfrak{A})$. Let $\mathcal{F}$ be a collection of filters of $\mathfrak{A}$. Set $\veebar \mathcal{F}=\mathscr{F}(\cup \mathcal{F})$. It is well-known that $(\mathscr{F}(\mathfrak{A});\cap,\veebar,\textbf{1},A)$ is a frame and so it is a complete Heyting algebra.
\begin{example}\label{fex2}
Consider the residuated lattice $\mathfrak{A}_6$ from Example \ref{rex2}. Then $\mathscr{F}(\mathfrak{A}_6)=\{F_1=\{1\},F_2=\{d,1\},F_3=\{a,b,d,1\},F_4=\{c,d,1\},F_5=A_6\}$.
\end{example}

The following remark has a routine verification.
\begin{remark}\label{genfilprop}
Let $\mathfrak{A}$ be a residuated lattice, $F$ be a filter and $X$ be a subset of $A$. The following assertions hold for any $x,y\in A$:
\begin{enumerate}
  \item  [$(1)$ \namedlabel{genfilprop1}{$(1)$}] $\mathscr{F}(F,x):=F\veebar \mathscr{F}(x)=\{a\in A|f\odot x^n\leq a,~f\in F\}$;
  \item  [$(2)$ \namedlabel{genfilprop2}{$(2)$}] $x\leq y$ implies $\mathscr{F}(F,y)\subseteq \mathscr{F}(F,x)$.
  \item  [$(3)$ \namedlabel{genfilprop3}{$(3)$}] $\mathscr{F}(F,x)\cap \mathscr{F}(F,y)=\mathscr{F}(F,x\vee y)$;
  \item  [$(4)$ \namedlabel{genfilprop4}{$(4)$}] $\mathscr{F}(F,x)\veebar \mathscr{F}(F,y)=\mathscr{F}(F,x\odot y)$;
  \item  [$(5)$ \namedlabel{genfilprop5}{$(5)$}] $\mathscr{PF}(\mathfrak{A})$ is a sublattice of $\mathscr{F}(\mathfrak{A})$;
  \item  [$(6)$ \namedlabel{genfilprop6}{$(6)$}] if $A=\mathscr{F}(X)$, then $A=\mathscr{F}(Y)$ for some finite subset $Y\subseteq X$.
\end{enumerate}
\end{remark}

A proper filter of a residuated lattice $\mathfrak{A}$ is called \textit{maximal} if it is a maximal element in the set of all proper filters. The set of all maximal filters of $\mathfrak{A}$ is denoted by $Max(\mathfrak{A})$. A proper filter $P$ of $\mathfrak{A}$ is called \textit{prime} if $F_1\cap F_2=P$ implies $F_1=P$ or $F_2=P$ for any $F_1,F_2\in \mathscr{F}(\mathfrak{A}_{\nu})$. The set of all prime filters of $\mathfrak{A}$ is denoted by $Spec(\mathfrak{A})$.  It is obvious that $Max(\mathfrak{A})\subseteq Spec(\mathfrak{A})$. By Zorn's lemma follows that any proper filter is contained in a maximal filter and so in a prime filter.
\begin{proposition}\label{preqpro}
Let $\mathfrak{A}$ be a residuated lattice and $P$ be a proper filter of $\mathfrak{A}$. The following assertions are equivalent:
\begin{enumerate}
\item  [(1) \namedlabel{preqpro1}{(1)}] $P$ is prime;
\item  [(2) \namedlabel{preqpro2}{(2)}] $F_1\cap F_2\subseteq P$ implies $F_1\subseteq P$ or $F_2\subseteq P$ for any $F_1,F_2\in \mathscr{F}(\mathfrak{A}_{\nu})$;
\item  [(3) \namedlabel{preqpro3}{(3)}] $x\vee y\in P$ implies $x\in P$ or $y\in P$ for any $x,y\in A$.
\end{enumerate}
\end{proposition}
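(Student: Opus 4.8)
The plan is to establish the three conditions as a cycle of implications, namely \ref{preqpro1} $\Rightarrow$ \ref{preqpro3} $\Rightarrow$ \ref{preqpro2} $\Rightarrow$ \ref{preqpro1}, with essentially all of the work concentrated in the first implication. The implication \ref{preqpro2} $\Rightarrow$ \ref{preqpro1} is immediate: if $F_1\cap F_2=P$ then in particular $F_1\cap F_2\subseteq P$, so \ref{preqpro2} yields $F_1\subseteq P$ or $F_2\subseteq P$; but $F_1\cap F_2=P$ also forces $P\subseteq F_1$ and $P\subseteq F_2$, and combining the relevant pair of inclusions gives $F_1=P$ or $F_2=P$, which is exactly primeness.

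For \ref{preqpro1} $\Rightarrow$ \ref{preqpro3}, suppose $P$ is prime and $x\vee y\in P$. The idea is to test primeness against the two generated filters $\mathscr{F}(P,x)$ and $\mathscr{F}(P,y)$. By \ref{genfilprop3} we have $\mathscr{F}(P,x)\cap\mathscr{F}(P,y)=\mathscr{F}(P,x\vee y)$, and since $x\vee y\in P$ the right-hand side collapses to $P$ itself (adjoining to $P$ an element it already contains leaves it unchanged, as $\mathscr{F}(x\vee y)\subseteq P$). Hence $\mathscr{F}(P,x)\cap\mathscr{F}(P,y)=P$, so primeness forces $\mathscr{F}(P,x)=P$ or $\mathscr{F}(P,y)=P$. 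Because $x\in\mathscr{F}(P,x)$ and $y\in\mathscr{F}(P,y)$, this gives $x\in P$ or $y\in P$, as required.

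For \ref{preqpro3} $\Rightarrow$ \ref{preqpro2}, I would argue contrapositively. Assume $F_1\cap F_2\subseteq P$ while $F_1\not\subseteq P$ and $F_2\not\subseteq P$, and pick witnesses $x\in F_1\setminus P$ and $y\in F_2\setminus P$. Since a filter is closed under joins with arbitrary elements of $A$, the element $x\vee y$ lies in both $F_1$ and $F_2$, hence in $F_1\cap F_2\subseteq P$. But then \ref{preqpro3} gives $x\in P$ or $y\in P$, contradicting the choice of $x$ and $y$. Therefore $F_1\subseteq P$ or $F_2\subseteq P$.

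The only genuinely substantive step is \ref{preqpro1} $\Rightarrow$ \ref{preqpro3}, and its entire force rests on the identity $\mathscr{F}(P,x)\cap\mathscr{F}(P,y)=\mathscr{F}(P,x\vee y)$ recorded in \ref{genfilprop3}: this is what converts the element-wise join condition of \ref{preqpro3} into the filter-intersection condition that primeness directly addresses. The remaining two implications are purely formal manipulations of inclusions together with the defining upward-closure of filters under $\vee$, so I expect no real obstacle there.
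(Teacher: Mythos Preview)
Your proof is correct. The paper states this proposition without proof, so there is nothing to compare against; your argument is the standard one and each implication is handled cleanly. The only step requiring any input beyond definitions is \ref{preqpro1}~$\Rightarrow$~\ref{preqpro3}, and your use of \textsc{Remark}~\ref{genfilprop}\ref{genfilprop3} to obtain $\mathscr{F}(P,x)\cap\mathscr{F}(P,y)=\mathscr{F}(P,x\vee y)=P$ is exactly right.
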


A non-empty subset $\mathscr{C}$ of $\mathfrak{A}$ is called \textit{$\vee$-closed} if it is closed under the join operation, i.e $x,y\in \mathscr{C}$ implies $x\vee y\in \mathscr{C}$.
\begin{remark}\label{primclos}
It is obvious that a filter $P$ is prime if and only if $P^{c}$ is $\vee$-closed. Also, if $\mathscr{P}\subseteq Spec(\mathfrak{A})$, then $(\cup \mathscr{P})^{c}$ is a $\vee$-closed subset of $\mathfrak{A}$.
\end{remark}

The following result is an easy consequence of Zorn's lemma.
\begin{lemma}\label{0prfiltth}
If $\mathscr{C}$ is a $\vee$-closed subset of $\mathfrak{A}$ which does not meet the filter $F$, then $\mathscr{C}$ is contained in a $\vee$-closed subset $\textsf{C}$ which is maximal with respect to the property of not meeting $F$.
\end{lemma}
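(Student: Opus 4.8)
The plan is to apply Zorn's lemma to a suitably chosen poset of $\vee$-closed sets. First I would let $\mathcal{S}$ denote the collection of all $\vee$-closed subsets $D$ of $\mathfrak{A}$ that satisfy $\mathscr{C}\subseteq D$ and $D\cap F=\emptyset$, partially ordered by inclusion. The hypothesis guarantees that $\mathscr{C}$ itself lies in $\mathcal{S}$, so $\mathcal{S}$ is non-empty; the desired set $\textsf{C}$ will be produced as a maximal element of $(\mathcal{S},\subseteq)$.

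Next I would verify the chain condition. Let $\mathcal{T}=\{D_i\}_{i\in I}$ be a chain in $\mathcal{S}$ and set $D=\bigcup_{i\in I}D_i$. Clearly $\mathscr{C}\subseteq D$ and $D\cap F=\emptyset$, since each $D_i$ already has these properties. The point requiring the chain hypothesis is that $D$ is $\vee$-closed: given $x,y\in D$, we have $x\in D_i$ and $y\in D_j$ for some $i,j\in I$, and because $\mathcal{T}$ is totally ordered one of $D_i,D_j$ contains the other, say $D_i\subseteq D_j$; then $x,y\in D_j$, and as $D_j$ is $\vee$-closed we obtain $x\vee y\in D_j\subseteq D$. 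Hence $D\in\mathcal{S}$ and it is an upper bound for $\mathcal{T}$.

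Having checked that every chain in the non-empty poset $\mathcal{S}$ admits an upper bound, Zorn's lemma yields a maximal element $\textsf{C}\in\mathcal{S}$. By construction $\textsf{C}$ is a $\vee$-closed set containing $\mathscr{C}$ with $\textsf{C}\cap F=\emptyset$, and maximality in $\mathcal{S}$ is exactly maximality with respect to the property of not meeting $F$ among $\vee$-closed supersets of $\mathscr{C}$, as required.

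There is no real obstacle here: the only substantive step is the routine verification that a directed union of $\vee$-closed sets is again $\vee$-closed, which is immediate from the chain property as above. The remainder is a direct invocation of Zorn's lemma, matching the paper's description of the result as an easy consequence of it.
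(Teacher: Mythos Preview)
Your argument is correct and is exactly the standard Zorn's lemma argument the paper alludes to (the paper gives no explicit proof, merely stating the result is ``an easy consequence of Zorn's lemma''). One tiny remark: the lemma asks for $\textsf{C}$ maximal among \emph{all} $\vee$-closed subsets missing $F$, not only among those containing $\mathscr{C}$; but these coincide, since any $\vee$-closed $E\supseteq\textsf{C}$ with $E\cap F=\emptyset$ automatically contains $\mathscr{C}$ and hence lies in your poset $\mathcal{S}$.
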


The following important result is proved for pseudo-BL algebras \cite[Theorem 4.28]{din0}; however, it can be proved without difficulty in all residuated lattices.
\begin{theorem}\label{prfilth}(Prime filter theorem)
If $\mathscr{C}$ is a $\vee$-closed subset of $\mathfrak{A}$ which does not meet the filter $F$, then $F$ is contained in a filter $P$ which is maximal with respect to the property of not meeting $\mathscr{C}$; furthermore $P$ is prime.
\end{theorem}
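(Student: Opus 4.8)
The plan is to separate the statement into two tasks: producing the filter $P$ by a Zorn's lemma argument, and then verifying that such a $P$ is automatically prime.

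For the existence of $P$, I would consider the family $\mathcal{K}$ of all filters of $\mathfrak{A}$ that contain $F$ and are disjoint from $\mathscr{C}$, ordered by inclusion. The hypothesis guarantees $F\in\mathcal{K}$, so $\mathcal{K}$ is nonempty. Given a chain in $\mathcal{K}$, its union is again a filter (the defining closure conditions each involve only finitely many elements, so they are inherited by the union of a chain), it still contains $F$, and it remains disjoint from $\mathscr{C}$, since any element of $\mathscr{C}$ lying in the union would already lie in one member of the chain. Hence every chain has an upper bound in $\mathcal{K}$, and Zorn's lemma furnishes a maximal element $P$, which is precisely a filter containing $F$ that is maximal with respect to not meeting $\mathscr{C}$.

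For the primality of $P$, I would use the characterization in Proposition~\ref{preqpro} (condition \ref{preqpro3}) and argue by contradiction. Suppose $x\vee y\in P$ while $x\notin P$ and $y\notin P$. Then $\mathscr{F}(P,x)$ and $\mathscr{F}(P,y)$ each strictly contain $P$, so by the maximality of $P$ each must meet $\mathscr{C}$; I would pick $c_1\in\mathscr{C}\cap\mathscr{F}(P,x)$ and $c_2\in\mathscr{C}\cap\mathscr{F}(P,y)$. Since $\mathscr{C}$ is $\vee$-closed, $c_1\vee c_2\in\mathscr{C}$. Because filters are upward closed and $c_1\vee c_2$ dominates each $c_i$, the element $c_1\vee c_2$ belongs to both $\mathscr{F}(P,x)$ and $\mathscr{F}(P,y)$, hence to their intersection. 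Invoking identity \ref{genfilprop3} of Remark~\ref{genfilprop}, which gives $\mathscr{F}(P,x)\cap\mathscr{F}(P,y)=\mathscr{F}(P,x\vee y)$, together with $x\vee y\in P$ (so that $\mathscr{F}(P,x\vee y)=P$), I conclude $c_1\vee c_2\in P$. This contradicts $c_1\vee c_2\in\mathscr{C}$ and the disjointness of $P$ from $\mathscr{C}$, so $P$ is prime.

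The hard part will be the primality argument rather than the Zorn's lemma step, and within it the decisive move is to pass from the two separate witnesses $c_1,c_2$ to the single element $c_1\vee c_2$: the $\vee$-closedness of $\mathscr{C}$ keeps it inside $\mathscr{C}$, the upward closure of filters places it in both $\mathscr{F}(P,x)$ and $\mathscr{F}(P,y)$, and the meet-formula \ref{genfilprop3} then collapses this into membership in $P$. Everything else is bookkeeping.
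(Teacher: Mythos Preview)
Your argument is correct and is precisely the standard route for this result. The paper itself does not supply a proof: it merely remarks that the theorem is proved for pseudo-BL algebras in \cite[Theorem~4.28]{din0} and that it carries over to residuated lattices without difficulty. What you have written is exactly the kind of proof the paper is alluding to, and your use of Remark~\ref{genfilprop}\ref{genfilprop3} together with Proposition~\ref{preqpro}\ref{preqpro3} is the natural way to organize the primality step in this setting.
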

\begin{corollary}\label{intprimfilt}
Let $F$ be a filter of a residuated lattice $\mathfrak{A}$ and $X$ be a subset of $A$. The following assertions hold:
\begin{enumerate}
\item  [(1) \namedlabel{intprimfilt1}{(1)}]  If $X\nsubseteq F$, there exists a prime filter $P$ such that $F\subseteq P$ and $X\nsubseteq P$;
\item  [(2) \namedlabel{intprimfilt2}{(2)}] $\mathscr{F}(X)=\bigcap \{P\in Spec(\mathfrak{A})|X\subseteq P\}$.
\end{enumerate}
\end{corollary}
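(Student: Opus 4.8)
The plan is to derive both parts directly from the Prime Filter Theorem (Theorem \ref{prfilth}), the key observation being that every singleton is $\vee$-closed, since $x\vee x=x$. This lets me feed a single ``witness'' element into the theorem as the $\vee$-closed set to be avoided.

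For (1), since $X\nsubseteq F$ I would fix an element $x\in X\setminus F$. The set $\mathscr{C}=\{x\}$ is $\vee$-closed and, because $x\notin F$, it does not meet $F$. Applying Theorem \ref{prfilth} to $\mathscr{C}$ and $F$ yields a prime filter $P$ with $F\subseteq P$ that is maximal with respect to not meeting $\{x\}$; in particular $x\notin P$. Since $x\in X$, this gives $X\nsubseteq P$, as required, and $P$ is automatically proper because it is prime.

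For (2), the inclusion $\mathscr{F}(X)\subseteq\bigcap\{P\in Spec(\mathfrak{A})\mid X\subseteq P\}$ is immediate from the minimality of the generated filter: every prime filter $P$ occurring in the intersection is in particular a filter containing $X$, hence contains the smallest such filter $\mathscr{F}(X)$. For the reverse inclusion I would argue by contraposition. Suppose $a\notin\mathscr{F}(X)$. Then $\{a\}\nsubseteq\mathscr{F}(X)$, so applying part (1) with the filter $\mathscr{F}(X)$ in place of $F$ and the set $\{a\}$ in place of $X$ produces a prime filter $P$ with $\mathscr{F}(X)\subseteq P$ and $a\notin P$. As $X\subseteq\mathscr{F}(X)\subseteq P$, this $P$ belongs to the family indexing the intersection yet omits $a$; hence $a\notin\bigcap\{P\in Spec(\mathfrak{A})\mid X\subseteq P\}$. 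This establishes the reverse inclusion and completes (2).

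Since the argument is a routine application of the prime filter theorem, there is no serious obstacle; the only point demanding care is the degenerate case $\mathscr{F}(X)=A$, where no prime filter contains $X$ and the intersection runs over the empty family. Reading that empty intersection as the whole algebra $A$ keeps the identity in (2) valid, and this is consistent with the contrapositive step above, which becomes vacuous precisely because when $\mathscr{F}(X)=A$ there is no $a\notin\mathscr{F}(X)$ to exclude.
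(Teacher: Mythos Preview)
Your proof is correct and follows essentially the same route as the paper: both parts reduce to the Prime Filter Theorem via the singleton $\vee$-closed set $\{x\}$, and part (2) is obtained by applying part (1) to $\mathscr{F}(X)$ and the witness $\{a\}$. Your write-up is in fact more careful than the paper's, spelling out why singletons are $\vee$-closed and handling the degenerate case $\mathscr{F}(X)=A$ explicitly.
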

\begin{proof}
\begin{enumerate}
  \item [\ref{intprimfilt1}:] Let $x\in X-F$. By taking $\mathscr{C}=\{x\}$ it follows by Theorem \ref{prfilth}.
  \item [\ref{intprimfilt2}:] Set $\sigma_{X}=\{P\in Spec(\mathfrak{A})|X\subseteq P\}$. Obviously, we have $\mathscr{F}(X)\subseteq \bigcap \sigma_{X}$. Now let $a\notin \mathscr{F}(X)$. By \ref{intprimfilt1} follows that there exits a prime filter $P$ containing $\mathscr{F}(X)$ such that $a\notin P$. It shows that $a\notin \bigcap \sigma_{X}$.
\end{enumerate}
\end{proof}

Let $\mathfrak{A}$ be a residuated lattice and $X$ be a subset of $A$. A prime filter $P$ is called a \textit{minimal prime filter belonging to $X$} or $X$-\textit{minimal prime filter} if $P$ is a minimal element in the set of prime filters containing $X$. The set of $X$-minimal prime filters of $\mathfrak{A}$ is denoted by $Min_{X}(\mathfrak{A})$. A prime filter $P$ is called a \textit{minimal prime} if $P\in Min_{\{1\}}(\mathfrak{A})$. The set of minimal prime filters of $\mathfrak{A}$ is denoted by $Min(\mathfrak{A})$.

In following we give an important characterization for minimal prime filters.
\begin{theorem}\label{1mineq}(Minimal prime filter theorem)
Let $\mathfrak{A}$ be a residuated lattice and $F$ be a filter of $\mathfrak{A}$. A subset $P$ of $A$ is an $F$-minimal prime filter if and only if $P^{c}$ is a $\vee$-closed subset of $\mathfrak{A}$ which it is maximal with respect to the property of not meeting $F$.
\end{theorem}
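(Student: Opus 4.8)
The plan is to prove the two implications separately, in each case translating statements about the prime filter $P$ into statements about its complement $P^c$ via Remark \ref{primclos}, and using the Prime filter theorem (Theorem \ref{prfilth}) to manufacture prime filters out of $\vee$-closed sets. The guiding principle is that an inclusion $Q\subseteq P$ between filters corresponds to the reverse inclusion $P^c\subseteq Q^c$ between their complements, so minimality of $P$ should mirror maximality of $P^c$.

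For the forward direction, assume $P$ is an $F$-minimal prime filter. Primeness of $P$ gives, by Remark \ref{primclos}, that $P^c$ is $\vee$-closed, and $F\subseteq P$ gives $P^c\cap F=\emptyset$; it remains to establish maximality. I would argue by contradiction: if some $\vee$-closed set $\mathscr{D}\supsetneq P^c$ still avoids $F$, then Theorem \ref{prfilth} applied to $\mathscr{D}$ produces a prime filter $Q\supseteq F$ maximal with respect to not meeting $\mathscr{D}$, so $\mathscr{D}\subseteq Q^c$. Hence $P^c\subsetneq Q^c$, i.e. $Q\subsetneq P$, exhibiting a prime filter strictly between $F$ and $P$ and contradicting $F$-minimality.

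For the converse, assume $P^c$ is $\vee$-closed and maximal among $\vee$-closed sets not meeting $F$. The main obstacle is that this hypothesis does not a priori guarantee that $P$ is even a filter, so Remark \ref{primclos} cannot be invoked directly to conclude primeness. I would circumvent this by applying Theorem \ref{prfilth} to the $\vee$-closed set $P^c$ (which avoids $F$), obtaining a prime filter $Q\supseteq F$ maximal with respect to not meeting $P^c$; thus $P^c\subseteq Q^c$. Since $Q$ is prime, $Q^c$ is $\vee$-closed, and $F\subseteq Q$ forces $Q^c\cap F=\emptyset$. Now maximality of $P^c$ yields $Q^c=P^c$, that is $P=Q$, so $P$ genuinely is a prime filter containing $F$.

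Finally, to establish $F$-minimality, I would take any prime filter $R$ with $F\subseteq R\subseteq P$, note that $R^c$ is $\vee$-closed with $P^c\subseteq R^c$ and $R^c\cap F=\emptyset$, and invoke maximality of $P^c$ once more to get $R^c=P^c$, hence $R=P$. The only genuinely delicate point is recovering that $P$ is a filter in the converse direction; once the Prime filter theorem is used to pin $P$ down as the prime filter $Q$, the remaining steps are routine complement-flipping arguments.
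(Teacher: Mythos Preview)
Your proof is correct, and the converse direction is essentially identical to the paper's: both apply Theorem \ref{prfilth} to the $\vee$-closed set $P^c$ to manufacture a prime filter $Q$ with $F\subseteq Q\subseteq P$, then use maximality of $P^c$ to force $P=Q$; you are simply more explicit than the paper about the final $F$-minimality step.

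The forward direction differs. The paper first proves the converse, then handles the forward direction by invoking Lemma \ref{0prfiltth} (Zorn) to extend $P^c$ to a maximal $\vee$-closed set $\mathscr{C}$ avoiding $F$, applies the already-established converse to conclude $\mathscr{C}^c$ is an $F$-minimal prime filter contained in $P$, and deduces $\mathscr{C}=P^c$ from minimality of $P$. Your argument is more direct: you bypass Lemma \ref{0prfiltth} entirely and avoid bootstrapping from the converse, instead applying Theorem \ref{prfilth} straight to a hypothetical strictly larger $\vee$-closed $\mathscr{D}$ to produce a prime $Q\subsetneq P$ containing $F$, an immediate contradiction. Your route is marginally more economical; the paper's has the virtue of making the two directions visibly symmetric (each appeals to the other via the correspondence between $F$-minimal primes and maximal $\vee$-closed complements).
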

\begin{proof}
Let $P$ be a subset of $A$ such that $P^{c}$ is a $\vee$-closed subset of $\mathfrak{A}$
which is maximal w.r.t the property of not meeting $F$. By Proposition \ref{prfilth} there exists a prime filter $Q$ such that $Q$ not meeting $P^{c}$ and so $Q\subseteq P$. By \textsc{Remark} \ref{primclos}, $Q^{c}$ is a $\vee$-closed subset of $\mathfrak{A}$ and by hypothesis we have $P^{c}\subseteq Q^{c}$ and $Q^{c}\cap F=\emptyset$. So by maximality of $P^{c}$ we deduce that $P^{c}=Q^{c}$ and it means that $P=Q$. It shows that $P$ is a prime filter and moreover it shows that $P$ is an $F$-minimal prime filter.

Conversely, let $P$ be an $F$-minimal prime filter of $\mathfrak{A}$. By \textsc{Remark} \ref{primclos}, $P^{c}$ is a $\vee$-closed subset of $\mathfrak{A}$ such that $P^{c}\cap F=\emptyset$. By using Lemma \ref{0prfiltth} we can obtain a $\vee$-closed subset $\mathscr{C}$ of $\mathfrak{A}$ such that it is maximal with respect to the property of not meeting $F$. By case just proved, $\mathscr{C}'$ is an $F$-minimal prime filter such that $\mathscr{C}'\cap P^{c}=\emptyset$ and it implies $\mathscr{C}'\subseteq P$. By hypothesis $\mathscr{C}=P^{c}$ and it shows that $P^{c}$ is a $\vee$-closed subset of $\mathfrak{A}$ such that it is maximal with respect to the property of not meeting $F$.
\end{proof}
\begin{corollary}\label{primeminimal}
Let $\mathfrak{A}$ be a residuated lattice, $X$ be a subset of $A$ and $P$ be a prime filter containing $X$. Then there exists an $X$-minimal prime filter contained in $P$.
\end{corollary}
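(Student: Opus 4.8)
The plan is to reduce everything to the filter generated by $X$ and then invoke the Minimal prime filter theorem (Theorem \ref{1mineq}) together with the Zorn-type Lemma \ref{0prfiltth}. The first observation I would make is that a prime filter contains $X$ if and only if it contains $\mathscr{F}(X)$: since any prime filter is in particular a filter, $X\subseteq Q$ forces $\mathscr{F}(X)\subseteq Q$, and the reverse inclusion is trivial. Consequently the set of prime filters containing $X$ coincides with the set of prime filters containing $F:=\mathscr{F}(X)$, so being an $X$-minimal prime filter is the same as being an $F$-minimal prime filter. This reduction is the one genuinely conceptual step; everything afterwards is bookkeeping with complements.

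Having fixed $F=\mathscr{F}(X)$, I would start from the hypothesis that $P$ is a prime filter with $X\subseteq P$, hence $F\subseteq P$. By \textsc{Remark} \ref{primclos}, $P^{c}$ is a $\vee$-closed subset of $\mathfrak{A}$, and since $F\subseteq P$ we have $P^{c}\cap F=\emptyset$, i.e. $P^{c}$ does not meet $F$. Thus $P^{c}$ is a $\vee$-closed subset not meeting the filter $F$, which is exactly the situation required to apply Lemma \ref{0prfiltth}.

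Applying Lemma \ref{0prfiltth} to $\mathscr{C}=P^{c}$, I obtain a $\vee$-closed subset $\textsf{C}\supseteq P^{c}$ that is maximal with respect to the property of not meeting $F$. By Theorem \ref{1mineq}, the complement $\textsf{C}^{c}$ is then an $F$-minimal prime filter, and by the reduction above it is an $X$-minimal prime filter. Finally, taking complements in $P^{c}\subseteq \textsf{C}$ reverses the inclusion to $\textsf{C}^{c}\subseteq P$, so $\textsf{C}^{c}$ is the desired $X$-minimal prime filter contained in $P$.

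I do not expect a serious obstacle here: the argument is a direct concatenation of the stated results. The only points demanding care are the passage from the subset $X$ to the generated filter $\mathscr{F}(X)$ (needed because Lemma \ref{0prfiltth} and Theorem \ref{1mineq} are phrased for filters) and the repeated switching between a set and its complement, where one must keep track of the fact that inclusions reverse.
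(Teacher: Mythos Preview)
Your proof is correct and follows essentially the same route as the paper: pass to $F=\mathscr{F}(X)$, note $P^{c}$ is $\vee$-closed and disjoint from $F$, enlarge it via Lemma \ref{0prfiltth} to a maximal such $\textsf{C}$, and apply Theorem \ref{1mineq} to conclude that $\textsf{C}^{c}\subseteq P$ is an $F$-minimal (hence $X$-minimal) prime filter. Your explicit justification that $X$-minimal and $\mathscr{F}(X)$-minimal prime filters coincide is a nice addition that the paper leaves implicit.
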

\begin{proof}
By \textsc{Remark} \ref{primclos}, $P^{c}$ is a $\vee$-closed subset of $\mathfrak{A}$ such that $P^{c}\cap \mathscr{F}(X)=\emptyset$. By using Lemma \ref{0prfiltth} we can obtain a $\vee$-closed subset $\mathscr{C}$ of $\mathfrak{A}$ containing $P^{c}$ such that it is maximal with respect to the property of not meeting $\mathscr{F}(X)$. By Theorem \ref{1mineq}, $\mathscr{C}'$ is an $\mathscr{F}(X)$-minimal prime filter which it is contained in $P$.
\end{proof}

The following corollary should be compared with Corollary \ref{intprimfilt}.
\begin{corollary}\label{mininters}
Let $F$ be a filter of a residuated lattice $\mathfrak{A}$ and $X$ be a subset of $A$. The following assertions hold:
\begin{enumerate}
\item  [(1) \namedlabel{mininters1}{(1)}]  If $X\nsubseteq F$, there exists an $F$-minimal prime filter $\mathfrak{m}$ such that $X\nsubseteq \mathfrak{m}$;
\item  [(2) \namedlabel{mininters2}{(2)}] $\mathscr{F}(X)=\bigcap Min_{X}(\mathfrak{A})$.
\end{enumerate}
\end{corollary}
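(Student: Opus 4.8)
The plan is to obtain this corollary as the ``minimal prime'' refinement of Corollary~\ref{intprimfilt}, leaning entirely on the machinery just established: the fact that every prime filter above a given subset dominates an appropriate minimal prime filter (Corollary~\ref{primeminimal}) together with the prime-filter separation results of Corollary~\ref{intprimfilt}. No fresh appeal to Zorn's lemma should be needed, since all the existence content is already packaged in those statements.

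For \ref{mininters1}, I would start from the hypothesis $X\nsubseteq F$ and invoke Corollary~\ref{intprimfilt}\ref{intprimfilt1} to produce a prime filter $P$ with $F\subseteq P$ and $X\nsubseteq P$. Then I apply Corollary~\ref{primeminimal} with $F$ playing the role of the subset: since $P$ is a prime filter containing $F$, there is an $F$-minimal prime filter $\mathfrak{m}$ with $\mathfrak{m}\subseteq P$. The final step is the elementary contrapositive observation that $X\subseteq\mathfrak{m}\subseteq P$ would force $X\subseteq P$, contradicting the choice of $P$; hence $X\nsubseteq\mathfrak{m}$, as required.

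For \ref{mininters2}, I would prove the two inclusions separately. The inclusion $\mathscr{F}(X)\subseteq\bigcap Min_X(\mathfrak{A})$ is immediate: every $\mathfrak{m}\in Min_X(\mathfrak{A})$ contains $X$, hence contains the filter $\mathscr{F}(X)$ generated by $X$. For the reverse inclusion I would argue contrapositively: given $a\notin\mathscr{F}(X)$, I want an $X$-minimal prime filter omitting $a$. The key observation---and the one place that needs a moment's care---is that the prime filters containing $X$ are exactly the prime filters containing $\mathscr{F}(X)$, so that $Min_X(\mathfrak{A})=Min_{\mathscr{F}(X)}(\mathfrak{A})$; this lets me apply \ref{mininters1} with the filter $\mathscr{F}(X)$ and the singleton $\{a\}$. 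Since $a\notin\mathscr{F}(X)$ gives $\{a\}\nsubseteq\mathscr{F}(X)$, part \ref{mininters1} yields an $\mathscr{F}(X)$-minimal prime filter $\mathfrak{m}$ with $a\notin\mathfrak{m}$, and by the identification this $\mathfrak{m}$ lies in $Min_X(\mathfrak{A})$, witnessing $a\notin\bigcap Min_X(\mathfrak{A})$.

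The argument is essentially bookkeeping over the earlier results, so I do not anticipate a serious obstacle; the only subtlety is the parameter juggling---recognizing that the subset in Corollary~\ref{primeminimal} may be any subset (in particular a filter $F$) and that $X$-minimality is unchanged on passing from $X$ to $\mathscr{F}(X)$. Alternatively, \ref{mininters2} can be derived directly from Corollary~\ref{intprimfilt}\ref{intprimfilt2}: writing $\sigma_X$ for the set of prime filters containing $X$, one has $\mathscr{F}(X)=\bigcap\sigma_X$; the inclusion $Min_X(\mathfrak{A})\subseteq\sigma_X$ gives $\mathscr{F}(X)\subseteq\bigcap Min_X(\mathfrak{A})$, while Corollary~\ref{primeminimal} dominates each $P\in\sigma_X$ by some $\mathfrak{m}_P\in Min_X(\mathfrak{A})$ with $\mathfrak{m}_P\subseteq P$, whence $\bigcap Min_X(\mathfrak{A})\subseteq\bigcap\sigma_X=\mathscr{F}(X)$. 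I would mention this as the cleaner route if a self-contained derivation from \ref{mininters1} is not desired.
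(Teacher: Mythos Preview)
Your proposal is correct and matches the paper's proof. For \ref{mininters1} you do exactly what the paper does (combine Corollary~\ref{intprimfilt}\ref{intprimfilt1} with Corollary~\ref{primeminimal}); for \ref{mininters2} your ``alternative cleaner route'' via $\sigma_X$ and Corollary~\ref{primeminimal} is precisely the paper's argument, while your primary route through part~\ref{mininters1} is an equally valid minor repackaging of the same ingredients.
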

\begin{proof}
\begin{enumerate}
  \item [\ref{mininters1}:] It is a direct consequence of Corollary \ref{intprimfilt}\ref{intprimfilt1} and Corollary \ref{primeminimal}.
  \item [\ref{mininters2}:] Set $\sigma_{X}=\{P\in Spec(\mathfrak{A})|X\subseteq P\}$. By Corollary \ref{intprimfilt}\ref{intprimfilt2}, it is sufficient to show that $\bigcap Min_{X}(\mathfrak{A})=\bigcap \sigma_{X}$. It is obvious that $\bigcap \sigma_{X}\subseteq \bigcap Min_{X}(\mathfrak{A})$. Otherwise, let $a\in \bigcap Min_{X}(\mathfrak{A})$ and $P$ be an arbitrary element of $\sigma_{X}$. By Corollary \ref{primeminimal} there exists an $X$-minimal prime filter $\mathfrak{m}$ contained in $P$. Hence, $a\in \mathfrak{m}\subseteq P$ and it states that $\bigcap Min_{X}(\mathfrak{A})\subseteq \bigcap \sigma_{X}$.
\end{enumerate}
\end{proof}

Let $\mathfrak{A}$ be a residuated lattice and $F$ be a filter of $\mathfrak{A}$. For any subset $X$ of $A$ \textit{the coannihilator of $X$ belonging to $F$ in $\mathfrak{A}$} is denoted by $(F:X)$ and defined as follow:
\[(F:X)=\{a\in A|x\vee a\in F,\forall x\in X\}.\]
If $X=\{x\}$, we write $(F:x)$ instead of $(F:X)$ and in case $F=\{1\}$, we write $X^{\perp}$ instead of $(F:X)$.
\begin{proposition}\label{ccapspecx}
Let $\Pi$ be a collection of prime filters in a residuated lattice $\mathfrak{A}$. Then, for any subset $X$ of $A$, we have
\[(\bigcap \Pi:X)=\bigcap\{P\in \Pi|~X\nsubseteq P\}.\]
\end{proposition}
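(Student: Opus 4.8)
The plan is to prove the asserted identity by establishing the two inclusions separately, abbreviating $\Pi_X=\{P\in\Pi\mid X\nsubseteq P\}$. The guiding observation is that the two directions draw on two different features: the primeness of the members of $\Pi$ secures one inclusion, while the upward closure of filters secures the other. Recall that by definition of the coannihilator, $a\in(\bigcap\Pi:X)$ means precisely that $x\vee a\in\bigcap\Pi$, equivalently $x\vee a\in P$ for every $P\in\Pi$ and every $x\in X$.

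For the inclusion $(\bigcap\Pi:X)\subseteq\bigcap\Pi_X$, I would fix $a\in(\bigcap\Pi:X)$ and an arbitrary $P\in\Pi_X$, aiming to show $a\in P$. From the definition, $x\vee a\in\bigcap\Pi\subseteq P$ for every $x\in X$. Since $P\in\Pi_X$, there is some $x_0\in X$ with $x_0\notin P$; applying the primeness criterion of Proposition \ref{preqpro}\ref{preqpro3} to $x_0\vee a\in P$ then forces $a\in P$. As $P$ ranges over all of $\Pi_X$, this yields $a\in\bigcap\Pi_X$.

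For the reverse inclusion $\bigcap\Pi_X\subseteq(\bigcap\Pi:X)$, I would fix $a\in\bigcap\Pi_X$ and verify that $x\vee a\in P$ for every $P\in\Pi$ and every $x\in X$, which is exactly membership in the coannihilator of $X$ belonging to $\bigcap\Pi$. Here I split on whether $X\subseteq P$. If $X\subseteq P$, then $x\in P$, so upward closure of the filter $P$ gives $x\vee a\in P$. If $X\nsubseteq P$, then $P\in\Pi_X$, whence $a\in P$ by hypothesis, and again $x\vee a\in P$ by upward closure. In either case $x\vee a$ lies in every member of $\Pi$, hence in $\bigcap\Pi$, as required.

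I do not expect a genuine obstacle: once the correct property is matched to each inclusion, the argument is essentially bookkeeping. The only point meriting a little care is the case analysis in the second inclusion (distinguishing $X\subseteq P$ from $X\nsubseteq P$), together with the harmless degenerate situations $\Pi=\emptyset$ or $X=\emptyset$, where both sides collapse to $A$ under the standard conventions on empty intersections and vacuous quantification.
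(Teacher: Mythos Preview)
Your proof is correct and follows essentially the same approach as the paper: both directions are established exactly as you describe, with primeness used for $(\bigcap\Pi:X)\subseteq\bigcap\Pi_X$ and upward closure for the reverse. The only difference is cosmetic: the paper compresses your case split in the second inclusion into the single word ``obviously'' and does not comment on the degenerate cases.
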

\begin{proof}
Set $\Phi=\{P\in \Pi|~X\nsubseteq P\}$. Let $a\in (\bigcap \Pi:X)$ and $P\in \Phi$. Consider $x\in X\setminus P$. Since $x\vee a\in\bigcap \Pi \subseteq P$ so $a\in P$. It states that $a\in \bigcap\Phi$.

Conversely, let $a\in \bigcap\Phi$ and $x\in X$. Obviously, for any $P\in \Pi$ we have $x\vee a\in P$ and it states that $x\vee a\in \bigcap \Pi$. So $a\in (\bigcap \Pi:X)$.
\end{proof}
\begin{corollary}
Let $\mathfrak{A}$ be a residuated lattice. Then, for any subset $X$ of $A$, we have
\[X^{\perp}=\bigcap\{P\in Spec(\mathfrak{A})|~X\nsubseteq P\}=\bigcap\{\mathfrak{m}\in Min(\mathfrak{A})|~X\nsubseteq P\}.\]
\end{corollary}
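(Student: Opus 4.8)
The plan is to recognize $X^{\perp}$ as the coannihilator belonging to the trivial filter $\{1\}$ and then to invoke Proposition \ref{ccapspecx} twice, once with $\Pi=Spec(\mathfrak{A})$ and once with $\Pi=Min(\mathfrak{A})$. First I would record the immediate observation that, by the very definition of the coannihilator, $X^{\perp}=(\{1\}:X)$, since $X^{\perp}$ is simply the notation for $(F:X)$ in the case $F=\{1\}$.

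The only genuine ingredient is to realize the trivial filter as an intersection of prime filters. Applying Corollary \ref{intprimfilt}\ref{intprimfilt2} with $X$ replaced by $\{1\}$ gives $\{1\}=\mathscr{F}(\{1\})=\bigcap\{P\in Spec(\mathfrak{A})\mid \{1\}\subseteq P\}=\bigcap Spec(\mathfrak{A})$, where the last equality holds because every (prime) filter contains $1$. In the same way, Corollary \ref{mininters}\ref{mininters2} applied to $\{1\}$ yields $\{1\}=\bigcap Min_{\{1\}}(\mathfrak{A})=\bigcap Min(\mathfrak{A})$, using that $Min(\mathfrak{A})$ is by definition $Min_{\{1\}}(\mathfrak{A})$.

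With these two identities in hand, the result follows by direct substitution into Proposition \ref{ccapspecx}. Since $Spec(\mathfrak{A})$ is a collection of prime filters with $\bigcap Spec(\mathfrak{A})=\{1\}$, the proposition gives $X^{\perp}=(\{1\}:X)=(\bigcap Spec(\mathfrak{A}):X)=\bigcap\{P\in Spec(\mathfrak{A})\mid X\nsubseteq P\}$; and since $Min(\mathfrak{A})$ is likewise a collection of prime filters with $\bigcap Min(\mathfrak{A})=\{1\}$, the same proposition gives $X^{\perp}=(\bigcap Min(\mathfrak{A}):X)=\bigcap\{\mathfrak{m}\in Min(\mathfrak{A})\mid X\nsubseteq \mathfrak{m}\}$, which is the desired second equality.

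I do not expect any real obstacle here: the entire content of the statement is carried by Proposition \ref{ccapspecx}, and the proof amounts to specializing it to two particular collections of prime filters. The only point requiring a line of care is the verification that both index families are collections of prime filters whose intersection equals the trivial filter $\{1\}$, which is exactly what the two intersection corollaries supply when evaluated at $\{1\}$.
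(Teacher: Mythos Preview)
Your proof is correct and follows essentially the same approach as the paper's own proof, which simply cites Corollary~\ref{intprimfilt}\ref{intprimfilt2}, Corollary~\ref{mininters}, and Proposition~\ref{ccapspecx}. You have merely spelled out explicitly how these three ingredients combine: specialize the two intersection corollaries at $\{1\}$ to obtain $\bigcap Spec(\mathfrak{A})=\{1\}=\bigcap Min(\mathfrak{A})$, and then feed each collection into Proposition~\ref{ccapspecx}.
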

\begin{proof}
It follows by Corollary \ref{intprimfilt}\ref{intprimfilt2} and \ref{mininters}, and Proposition \ref{ccapspecx}.
\end{proof}

Let $\mathfrak{A}$ be a residuated lattice and $F$ be a filter of $\mathfrak{A}$. We set
\[D_{F}(P)=\{a\in A|(F:a)\nsubseteq P\}.\]
If $F=\{1\}$, we write $D(P)$ instead of $D_{F}(P)$.
\begin{proposition}\label{lemdivi}
 Let $\mathfrak{A}$ be a residuated lattice, $F$ be a filter of $\mathfrak{A}$ and $P$ be a prime filter of $\mathfrak{A}$. The following assertions hold:
\begin{enumerate}
  \item  [$(1)$ \namedlabel{lemdivi1}{$(1)$}] $F\subseteq D_{F}(P)=\cup_{x\notin P}(F:x)$;
  \item  [$(2)$ \namedlabel{lemdivi2}{$(2)$}] if $Q$ is a prime filter of $\mathfrak{A}$ containing $P$, then $D_{F}(Q)\subseteq D_{F}(P)$;
  \item  [$(3)$ \namedlabel{lemdivi3}{$(3)$}] if $P$ contains $F$, then $D_{F}(P)\subseteq P$;
%
\end{enumerate}
 \end{proposition}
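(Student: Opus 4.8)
The plan is to reduce every assertion to the symmetric description of the coannihilator: for $a,b\in A$ one has $b\in(F:a)$ if and only if $a\vee b\in F$ if and only if $a\in(F:b)$. This symmetry is immediate from the definition because $\vee$ is commutative, and it is the engine behind all three statements. I would state it explicitly at the outset, together with the two standing facts that $1\in F$ and, since $P$ is proper, $0\notin P$.

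For part \ref{lemdivi1} I would first prove the equality $D_{F}(P)=\bigcup_{x\notin P}(F:x)$. If $a\in D_{F}(P)$, then $(F:a)\nsubseteq P$, so there is some $b\in(F:a)$ with $b\notin P$; the symmetry gives $a\in(F:b)$ with $b\notin P$, so $a$ lies in the displayed union. Conversely, if $a\in(F:x)$ for some $x\notin P$, then $x\in(F:a)$ witnesses $(F:a)\nsubseteq P$, whence $a\in D_{F}(P)$. To see $F\subseteq D_{F}(P)$, take $f\in F$; since $0\notin P$ and $f\vee 0=f\in F$, we get $0\in(F:f)\setminus P$, so $(F:f)\nsubseteq P$ and $f\in D_{F}(P)$.

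For part \ref{lemdivi2}, assume $P\subseteq Q$ and let $a\in D_{F}(Q)$. Then $(F:a)\nsubseteq Q$, so some $b\in(F:a)$ satisfies $b\notin Q$; since $P\subseteq Q$, a fortiori $b\notin P$, hence $(F:a)\nsubseteq P$ and $a\in D_{F}(P)$. I would remark that this step uses only the inclusion $P\subseteq Q$ and the definition of $D_F$, not primeness.

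Part \ref{lemdivi3} is the one place where primeness is genuinely used. Assume $F\subseteq P$ and let $a\in D_{F}(P)$. By part \ref{lemdivi1} there is $x\notin P$ with $a\in(F:x)$, i.e.\ $x\vee a\in F\subseteq P$. Since $P$ is prime, Proposition \ref{preqpro}\ref{preqpro3} yields $x\in P$ or $a\in P$; as $x\notin P$, we conclude $a\in P$, giving $D_{F}(P)\subseteq P$. I do not anticipate a real obstacle: the proposition is essentially bookkeeping around the coannihilator, and the only points requiring care are remembering $0\notin P$ in part \ref{lemdivi1} and invoking the correct prime-filter characterization $(x\vee y\in P\Rightarrow x\in P$ or $y\in P)$ in part \ref{lemdivi3}.
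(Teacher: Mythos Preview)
Your proposal is correct and follows the same approach as the paper, which in fact only spells out part \ref{lemdivi3} (via the observation that $(F:a)\subseteq P$ for each $a\notin P$, then invoking \ref{lemdivi1}) and declares \ref{lemdivi1} and \ref{lemdivi2} routine. Your element-wise argument for \ref{lemdivi3} is the same computation unpacked, and your detailed treatments of \ref{lemdivi1} and \ref{lemdivi2} are exactly the routine verifications the paper omits.
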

\begin{proof}
We only prove the case \ref{lemdivi3}, because the other cases can be proved in a routine way.
\begin{enumerate}
\item [\ref{lemdivi3}:] Let $P$ contains $F$. Since $(F:a)\subseteq P$ for any $a\notin P$ so it follows by \ref{lemdivi1}.
%
%
\end{enumerate}
\end{proof}
The concept of a $pm$-lattice is introduced by \cite{paw} as a bounded distributive lattice in which each prime ideal is contained in a unique maximal prime ideal. In the following theorem a characterization of $pm$-lattices are given.
\begin{theorem}\citep{paw1}
Let $\mathfrak{A}$ be a bounded distributive lattice. Then $\mathfrak{A}$ is a $pm$-lattice if and only if for any two distinct maximal ideals $M_1$ and $M_2$, there exist $a_1\notin M_1$ and $a_2\notin M_2$ such that $a_1\vee a_2=0$.
\end{theorem}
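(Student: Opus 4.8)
The plan is to reformulate the $pm$ property as the non-existence of a prime ideal lying below two distinct maximal ideals, and then to convert that existence question into a meet-closure computation to which the prime ideal theorem (the order dual of Theorem \ref{prfilth}) applies. First I would record that the separating condition has to be read with a meet, namely that there exist $a_1 \notin M_1$ and $a_2 \notin M_2$ with $a_1 \wedge a_2 = 0$; written with $\vee$ the equation forces $a_1 = a_2 = 0 \in M_1$ and is vacuous, and the meet version is exactly the lattice analogue of the product condition $a_1 a_2 = 0$ characterising $pm$-rings. Throughout I would use that in a bounded distributive lattice maximal ideals are prime, that every proper ideal extends to a maximal ideal by Zorn's lemma, and that the complement of a prime ideal is a $\wedge$-closed up-set not containing $0$.

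For the converse direction (condition $\Rightarrow pm$), which is the routine one, I would take a prime ideal $P$ and suppose it lies below two distinct maximal ideals $M_1 \neq M_2$. Applying the hypothesis to the pair $M_1, M_2$ yields $a_1 \notin M_1$ and $a_2 \notin M_2$ with $a_1 \wedge a_2 = 0 \in P$. Since $P$ is prime, $a_1 \in P$ or $a_2 \in P$; but $P \subseteq M_1$ and $P \subseteq M_2$, so each case contradicts $a_1 \notin M_1$ or $a_2 \notin M_2$. Hence the maximal ideal above $P$ is unique and $\mathfrak{A}$ is a $pm$-lattice.

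The forward direction ($pm \Rightarrow$ condition) is the main obstacle and is where the prime ideal theorem is needed. Arguing contrapositively, I would fix distinct maximal ideals $M_1, M_2$ admitting no separating pair, i.e. $a_1 \wedge a_2 \neq 0$ for every $a_1 \notin M_1$ and $a_2 \notin M_2$, and build a prime ideal contained in $M_1 \cap M_2$, contradicting $pm$. The key step is to analyse the meet-closure $S$ of $M_1^c \cup M_2^c$: because each $M_i^c$ is itself $\wedge$-closed and contains $1$, every finite meet of elements of $M_1^c \cup M_2^c$ collapses to a single meet $u \wedge v$ with $u \notin M_1$ and $v \notin M_2$, so $S = \{u \wedge v : u \notin M_1,\ v \notin M_2\}$. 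The standing assumption says precisely that $0 \notin S$, so $S$ is a $\wedge$-closed set disjoint from the zero ideal $(0]$. Applying the order dual of Theorem \ref{prfilth} to $(0]$ and $S$, I obtain a prime ideal $P$ disjoint from $S$; then $P^c \supseteq S \supseteq M_1^c \cup M_2^c$ forces $P \subseteq M_1 \cap M_2$. Since $M_1 \neq M_2$ are maximal ideals both containing $P$, this contradicts the uniqueness in the $pm$ hypothesis.

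I expect the delicate points to be purely bookkeeping: checking that each $M_i^c$ is a $\wedge$-closed up-set avoiding $0$, verifying that the meet-closure genuinely reduces to the two-factor form $u \wedge v$, and invoking the dual prime ideal theorem with the correct roles of ``ideal'' and ``$\wedge$-closed set''. None of these steps requires computation beyond the structure already assembled in Section \ref{sec2}.
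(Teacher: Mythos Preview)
The paper does not give its own proof of this theorem: it is simply quoted from \citep{paw1} and then used to motivate the notion of an $F$-closed collection, so there is nothing in the paper to compare your argument against directly. That said, your proposal is correct and is essentially the classical argument; in particular your reading of the separation condition as $a_1 \wedge a_2 = 0$ rather than the printed $a_1 \vee a_2 = 0$ is right (with $\vee$ the condition is vacuous, and the filter-side analogue developed immediately afterwards in the paper, where one asks for $a_1 \vee a_2 \in F$, confirms that the ideal-side statement is meant with $\wedge$). Your use of the meet-closure $S=\{u\wedge v: u\notin M_1,\ v\notin M_2\}$ together with the dual of the prime filter theorem is exactly the construction the paper itself carries out in the filter setting inside the proof of Theorem \ref{hausnorm} (there with $\mathscr{C}=\{x\vee y: x\notin C,\ y\notin P\}$), so your approach is fully in line with the techniques of Section \ref{sec2}.
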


In the following definition we generalize the dual of this equivalent assertion.
\begin{definition}
Let $\mathfrak{A}$ be a residuated lattice, $\Pi$ be a collection of prime filters and $F$ be a filter containing in $\bigcap \Pi$. $\Pi$ is called \textit{$F$-closed} if for any distinct elements $P_1$ and $P_2$ in $\Pi$ there exist $a_1\notin P_1$ and $a_2\notin P_2$ such that $a_1\vee a_2\in F$.
\end{definition}
\begin{proposition}\label{pmprop}
Let $\mathfrak{A}$ be a residuated lattice, $\Pi$ be a collection of prime filters and $F$ be a filter containing in $\bigcap \Pi$.  The following assertions hold:
 \begin{enumerate}
   \item [(1) \namedlabel{pmprop1}{(1)}] $\Pi$ is $F$-closed;
   \item [(2) \namedlabel{pmprop2}{(2)}] for any $P\in \Pi$, $P$ is the unique element in $\Pi$ containing $D_{F}(P)$.
 \end{enumerate}
\end{proposition}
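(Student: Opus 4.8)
The plan is to read the proposition as an equivalence of its two assertions and to prove it by unwinding the definitions of $D_F$ and of $F$-closedness into the single relation $a_2\in(F:a_1)\Longleftrightarrow a_1\vee a_2\in F$. The bridge between the two conditions is the formula $D_{F}(P)=\cup_{x\notin P}(F:x)$ together with the inclusion $D_F(P)\subseteq P$ valid whenever $F\subseteq P$, both recorded in Proposition \ref{lemdivi}. Since $F\subseteq \bigcap\Pi$, every $P\in\Pi$ contains $F$, so $D_F(P)\subseteq P$ holds throughout; in particular each $P$ does contain $D_F(P)$, and hence the only real content of assertion (2) is the \emph{uniqueness} of such an element.

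For the implication $(1)\Rightarrow(2)$, I would fix $P\in\Pi$ and suppose, toward a contradiction, that some $Q\in\Pi$ with $Q\neq P$ also satisfies $D_F(P)\subseteq Q$. Applying the $F$-closedness of $\Pi$ to the distinct pair $P,Q$ yields elements $a_1\notin P$ and $a_2\notin Q$ with $a_1\vee a_2\in F$. The last relation says exactly $a_2\in(F:a_1)$, and since $a_1\notin P$ the union formula gives $(F:a_1)\subseteq D_F(P)$; hence $a_2\in D_F(P)\subseteq Q$, contradicting $a_2\notin Q$. This forces $P=Q$ and establishes uniqueness.

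For the converse $(2)\Rightarrow(1)$, I would take distinct $P_1,P_2\in\Pi$ and exploit that $P_1$ is the \emph{unique} element of $\Pi$ containing $D_F(P_1)$: since $P_2\neq P_1$, we must have $D_F(P_1)\nsubseteq P_2$, so there is some $a_2\in D_F(P_1)\setminus P_2$. The union formula then supplies an $a_1\notin P_1$ with $a_2\in(F:a_1)$, i.e.\ $a_1\vee a_2\in F$. The pair $(a_1,a_2)$ witnesses the defining condition of $F$-closedness for $P_1,P_2$, and as this pair was arbitrary, $\Pi$ is $F$-closed.

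I do not expect a genuine obstacle here: both directions amount to the same one-line translation run forwards and backwards, and the only point requiring a little care is ensuring the existence half of assertion (2) is automatic (which is where $F\subseteq\bigcap\Pi$, and hence $D_F(P)\subseteq P$, is used), so that (2) reduces purely to uniqueness. The one subtlety I would double-check is the asymmetry in the definition of $F$-closedness, namely that the witness $a_1$ must lie outside $P_1$ and $a_2$ outside $P_2$; in the $(1)\Rightarrow(2)$ step one applies the definition to the ordered pair $(P,Q)$ and in $(2)\Rightarrow(1)$ one reads it off for $(P_1,P_2)$, so the roles of the two witnesses line up correctly in each case.
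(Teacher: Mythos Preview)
Your proposal is correct and follows essentially the same route as the paper's proof: both directions proceed by contradiction/contrapositive using the translation $a_1\vee a_2\in F\Leftrightarrow a_2\in(F:a_1)$ and the description $D_F(P)=\bigcup_{x\notin P}(F:x)$. You are in fact slightly more careful than the paper in making explicit that $F\subseteq\bigcap\Pi$ gives $D_F(P)\subseteq P$ (so each $P$ automatically contains $D_F(P)$ and only uniqueness is at stake), a point the paper leaves implicit.
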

\begin{proof}
\item \ref{pmprop1}$\Rightarrow$\ref{pmprop2}: Let $P_1$ and $P_2$ be distinct elements in $\Pi$ such that $D_{F}(P_1)\subseteq P_2$. By hypothesis there exits $a_1\notin P_1$ and $a_2\notin P_2$ such that $a_1\vee a_2\in F$. It implies that $a_2\in D_{F}(P_1)\subseteq P_2$; a contradiction.
\item \ref{pmprop2}$\Rightarrow$\ref{pmprop1}: Let $P_1$ and $P_2$ be distinct elements in $\Pi$. So $D_{F}(P_1)\nsubseteq P_2$. Consider $a_2\in D_{F}(P_1)\setminus P_2$. Since $a_2\in D_{F}(P_1)$ so there exists $a_1\notin P_1$ such that $a_1\vee a_2\in F$.
\end{proof}

In the following, for a collection of prime filters $\Pi$ in a residuated lattice $\mathfrak{A}$ we let $S_{\Pi}=\{Q\in Spec(\mathfrak{A})|\bigcap \Pi\subseteq Q\subseteq P,~for~some~P\in \Pi\}$.
\begin{proposition}\label{chauspro}
Let $\mathfrak{A}$ be a residuated lattice, $\Pi$ be a collection of prime filters and $F$ be a filter containing in $\bigcap \Pi$. Consider the following assertions:
\begin{enumerate}
   \item [(1) \namedlabel{chauspro1}{(1)}] $\Pi$ is $F$-closed;
   \item [(2) \namedlabel{chauspro2}{(2)}] any element of $S_{\Pi}$ is contained in a unique element of $\Pi$;
   \item [(3) \namedlabel{chauspro3}{(3)}] $\Pi$ is an antichain.
 \end{enumerate}
 Then \ref{hauspro1} implies \ref{hauspro2} and \ref{hauspro2} implies \ref{hauspro3}. Moreover, \ref{hauspro3} implies \ref{hauspro1} provided that $(x\rightarrow y)\vee (y\rightarrow x)\in F$ for any $x,y\in A$.
\end{proposition}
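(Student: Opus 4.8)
The plan is to treat the three implications in turn, in each case leaning on the primeness of the members of $\Pi$ (through Proposition \ref{preqpro}) together with the standing hypothesis $F\subseteq \bigcap \Pi$, which lets $F$ be transported into any filter that sits above $\bigcap \Pi$.

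For \ref{chauspro1}$\Rightarrow$\ref{chauspro2}, existence of a containing element is built into the definition of $S_{\Pi}$, so the only work is uniqueness. I would suppose some $Q\in S_{\Pi}$ were contained in two distinct $P_1,P_2\in \Pi$ and invoke $F$-closedness to produce $a_1\notin P_1$ and $a_2\notin P_2$ with $a_1\vee a_2\in F$. Since $F\subseteq \bigcap \Pi\subseteq Q$ we get $a_1\vee a_2\in Q$, and as $Q$ is prime, Proposition \ref{preqpro}\ref{preqpro3} forces $a_1\in Q$ or $a_2\in Q$; either alternative contradicts $Q\subseteq P_1$, respectively $Q\subseteq P_2$, so $P_1=P_2$.

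For \ref{chauspro2}$\Rightarrow$\ref{chauspro3} I would argue by contraposition. If $\Pi$ is not an antichain, choose distinct comparable members, say $P_1\subsetneq P_2$. Then $P_1$ itself lies in $S_{\Pi}$, because $\bigcap \Pi\subseteq P_1\subseteq P_1$ with $P_1\in \Pi$, yet $P_1$ is contained in the two distinct elements $P_1$ and $P_2$ of $\Pi$, contradicting the uniqueness in \ref{chauspro2}.

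The genuinely computational step, and the one I expect to be the main obstacle, is \ref{chauspro3}$\Rightarrow$\ref{chauspro1}. Here I would first record that every filter $G$ satisfies modus ponens: since $G$ is closed under $\odot$ and upward closed and $x\odot(x\rightarrow y)\le y$, membership of $x$ and $x\rightarrow y$ in $G$ yields $y\in G$. Given distinct $P_1,P_2\in \Pi$, the antichain hypothesis supplies $a\in P_1\setminus P_2$ and $b\in P_2\setminus P_1$. Then $a\rightarrow b\notin P_1$, for otherwise modus ponens with $a\in P_1$ would force $b\in P_1$; symmetrically $b\rightarrow a\notin P_2$. The extra assumption $(a\rightarrow b)\vee(b\rightarrow a)\in F$ now furnishes exactly the witnesses $a_1=a\rightarrow b$ and $a_2=b\rightarrow a$ required for $F$-closedness. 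The delicate point is the index bookkeeping---pairing each non-membership with the correct filter---which is precisely what makes the prelinearity-type condition on $F$ the right hypothesis to close the cycle.
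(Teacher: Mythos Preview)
Your proof is correct. The arguments for \ref{chauspro2}$\Rightarrow$\ref{chauspro3} and \ref{chauspro3}$\Rightarrow$\ref{chauspro1} coincide with the paper's (the paper writes ``$x\rightarrow y\notin P_1$'' without justification; you supply the modus-ponens reasoning explicitly, which is fine).

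For \ref{chauspro1}$\Rightarrow$\ref{chauspro2} you take a more direct route than the paper. The paper first invokes Proposition~\ref{pmprop} to reformulate $F$-closedness as ``$P$ is the unique member of $\Pi$ containing $D_F(P)$'', then observes (via Proposition~\ref{lemdivi}) that $D_F(P)\subseteq Q$ whenever $Q\in S_\Pi$ with $Q\subseteq P$, so any $P'\in\Pi$ containing $Q$ also contains $D_F(P)$ and must equal $P$. You bypass $D_F$ entirely: assuming $Q\subseteq P_1$ and $Q\subseteq P_2$ with $P_1\neq P_2$, you pull the witnesses $a_1,a_2$ straight from the definition of $F$-closed, push $a_1\vee a_2\in F\subseteq Q$, and let primeness of $Q$ do the work. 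Your argument is shorter and self-contained; the paper's route has the advantage of tying the implication to the $D_F$ machinery used elsewhere (e.g.\ Corollary~\ref{dfppr}), but for this proposition alone your version is cleaner.
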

\begin{proof}
\item \ref{chauspro1}$\Rightarrow$\ref{chauspro2}: Let $Q\in S_{\Pi}$ and $Q\subseteq P$ for some $P\in \Pi$. Since $D_{F}(P)\subseteq Q$ so the result holds by Proposition \ref{pmprop}.
\item \ref{chauspro2}$\Rightarrow$\ref{chauspro3}: It is trivial, since $\Pi\subseteq S_{\Pi}$.

Now, let $(x\rightarrow y)\vee (y\rightarrow x)\in F$ for any $x,y\in A$.
\item \ref{chauspro3}$\Rightarrow$\ref{chauspro1}: Let $P_1$ and $P_2$ be distinct elements in $\Pi$. So there exist $x\in P_1\setminus P_2$ and $y\in P_2\setminus P_1$. It implies that $x\rightarrow y\notin P_1$ and $y\rightarrow x\notin P_2$. Since $(x\rightarrow y)\vee (y\rightarrow x)\in F$ so the result holds.
\end{proof}
\begin{corollary}\label{mtlcloant}
Let $\mathfrak{A}$ be a MTL algebra and $\Pi$ be a collection of prime filters in $\mathfrak{A}$. Then $\Pi$ is $\{1\}$-closed if and only if $\Pi$ is an antichain.
\end{corollary}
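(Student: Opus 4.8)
The plan is to obtain the corollary as a direct specialization of Proposition \ref{chauspro} to the least filter $F=\{1\}$. First I would note that $\{1\}$ is a filter and that it is contained in $\bigcap\Pi$, since every filter contains $1$; hence the standing hypotheses of Proposition \ref{chauspro} (that $F$ be a filter contained in $\bigcap\Pi$) are satisfied with this choice. With $F=\{1\}$, the two outer conditions in that proposition are exactly ``$\Pi$ is $\{1\}$-closed'' (assertion \ref{chauspro1}) and ``$\Pi$ is an antichain'' (assertion \ref{chauspro3}), so the desired equivalence is precisely the equivalence of the endpoints of the implication chain established there.

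For the forward implication, if $\Pi$ is $\{1\}$-closed then the unconditional part of Proposition \ref{chauspro} gives \ref{chauspro1}$\Rightarrow$\ref{chauspro2}$\Rightarrow$\ref{chauspro3}, so $\Pi$ is an antichain. This half uses no special feature of $\mathfrak{A}$ beyond its being a residuated lattice.

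For the converse I would invoke the moreover-clause of Proposition \ref{chauspro}, which supplies \ref{chauspro3}$\Rightarrow$\ref{chauspro1} under the proviso that $(x\rightarrow y)\vee(y\rightarrow x)\in F$ for all $x,y\in A$. The single point to verify, and the only place where the MTL hypothesis is needed, is this proviso: since $\mathfrak{A}$ satisfies the pre-linearity condition \ref{prel}, we have $(x\rightarrow y)\vee(y\rightarrow x)=1\in\{1\}=F$ for all $x,y\in A$, so the proviso holds automatically. Therefore every antichain of prime filters in an MTL algebra is $\{1\}$-closed, which completes the equivalence.

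The main—indeed the only—obstacle is checking that the proviso of Proposition \ref{chauspro} is met for $F=\{1\}$; this is exactly where pre-linearity enters and explains why the statement is formulated for MTL algebras rather than arbitrary residuated lattices. Once this is observed, the corollary is a straightforward reading of Proposition \ref{chauspro}.
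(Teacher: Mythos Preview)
Your proposal is correct and follows exactly the approach the paper takes: the paper's proof is simply ``It is straightforward by Proposition \ref{chauspro},'' and you have spelled out precisely why that proposition applies with $F=\{1\}$, including the role of pre-linearity in securing the proviso for \ref{chauspro3}$\Rightarrow$\ref{chauspro1}. There is nothing to add.
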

\begin{proof}
It is straightforward by Proposition \ref{chauspro}.
\end{proof}


The following corollary is a characterization for minimal prime filters.
\begin{theorem}\label{mincor}
Let $\mathfrak{A}$ be a residuated lattice, $F$ be a filter and $P$ be a prime filter containing $F$. The following assertions are equivalent:
\begin{enumerate}
  \item  [$(1)$ \namedlabel{mincor1}{$(1)$}] $P$ is an $F$-minimal prime filter;
  \item  [$(2)$ \namedlabel{mincor2}{$(2)$}] $P=D_{F}(P)$;
  \item  [$(3)$ \namedlabel{mincor3}{$(3)$}] for any $x\in A$, $P$ contains precisely one of $x$ or $(F:x)$.
\end{enumerate}
\end{theorem}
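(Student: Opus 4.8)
The plan is to prove the cyclic chain $\ref{mincor1}\Rightarrow\ref{mincor2}\Rightarrow\ref{mincor3}\Rightarrow\ref{mincor1}$. The arguments rest on one reformulation that I will use repeatedly: since $(F:a)=\{b\in A\mid a\vee b\in F\}$, membership $a\in D_{F}(P)$ is equivalent to the existence of some $b\notin P$ with $a\vee b\in F$, while $a\notin D_{F}(P)$ says exactly that $(F:a)\subseteq P$.

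The step $\ref{mincor2}\Rightarrow\ref{mincor3}$ should be purely definitional. The hypothesis $P=D_{F}(P)$ reads, for every $x\in A$, as the biconditional ``$x\in P$ if and only if $(F:x)\nsubseteq P$''. Hence if $x\in P$ then $(F:x)$ fails to lie in $P$, and if $x\notin P$ then $x\notin D_{F}(P)$, i.e. $(F:x)\subseteq P$; so exactly one of $x\in P$ and $(F:x)\subseteq P$ holds, which is \ref{mincor3}.

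For $\ref{mincor3}\Rightarrow\ref{mincor1}$ I would use primeness. Since $P$ is assumed prime and to contain $F$, only minimality is at issue, so I take any prime filter $Q$ with $F\subseteq Q\subseteq P$ and try to force $P\subseteq Q$. Given $a\in P$, hypothesis \ref{mincor3} rules out $(F:a)\subseteq P$, so there is $b\notin P$---hence $b\notin Q$---with $a\vee b\in F\subseteq Q$; applying primeness of $Q$ through Proposition \ref{preqpro}\ref{preqpro3} to $a\vee b\in Q$ gives $a\in Q$. Thus $P=Q$ and $P$ is $F$-minimal.

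The main obstacle is $\ref{mincor1}\Rightarrow\ref{mincor2}$. The inclusion $D_{F}(P)\subseteq P$ is already granted by Proposition \ref{lemdivi}\ref{lemdivi3}, so the work is to prove $P\subseteq D_{F}(P)$, and this is where minimality enters. I would invoke the Minimal prime filter theorem (Theorem \ref{1mineq}), which tells us that $P^{c}$ is $\vee$-closed and maximal with respect to not meeting $F$. Suppose for contradiction that $a\in P\setminus D_{F}(P)$. Then $a\notin F$ (as $F\subseteq D_{F}(P)$ by Proposition \ref{lemdivi}\ref{lemdivi1}) and no $p\in P^{c}$ can satisfy $a\vee p\in F$ (such a $p$ would witness $(F:a)\nsubseteq P$). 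I then form $\mathscr{C}=P^{c}\cup\{a\}\cup\{a\vee p\mid p\in P^{c}\}$ and check that it is $\vee$-closed and disjoint from $F$, yet strictly larger than $P^{c}$ because $a\in P$; this contradicts maximality of $P^{c}$. The $\vee$-closure is routine since $P^{c}$ is already $\vee$-closed and every new join collapses to the form $a\vee(p_{1}\vee p_{2})$, and the disjointness from $F$ is exactly the content of $a\notin D_{F}(P)$. The contradiction yields $P\subseteq D_{F}(P)$, closing the cycle.
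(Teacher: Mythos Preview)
Your proof is correct and follows essentially the same route as the paper's. In $\ref{mincor1}\Rightarrow\ref{mincor2}$ the paper also enlarges $P^{c}$ to the $\vee$-closed set $(x\vee P^{c})\cup P^{c}$ and invokes the maximality in Theorem~\ref{1mineq}; the only cosmetic difference is that the paper argues directly for an arbitrary $x\in P$ rather than by contradiction, and omits the singleton $\{x\}$ from the enlarged set (which is harmless since $x\vee P^{c}\subseteq P$ already forces the enlargement to be strict). Your $\ref{mincor3}\Rightarrow\ref{mincor1}$ via primeness of $Q$ is a slight variant of the paper's use of $D_{F}(P)\subseteq D_{F}(Q)\subseteq Q$, but the content is the same.
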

\begin{proof}
\item [] \ref{mincor1}$\Rightarrow$ \ref{mincor2}: Let $x\in P$. It is easy to check that $\mathscr{C}=(x\vee P^{c})\cup P^{c}$ is a $\vee$-closed subset of $\mathfrak{A}$. By Theorem \ref{1mineq} we obtain that $(x\vee P^{c})\cap F=\mathscr{C}\cap F\neq \emptyset$. Assume that $a\in (x\vee P^{c})\cap F$. So there exists $y\notin P$ such that $x\vee y=a\in F$ and it means that  $x\in D_{F}(P)$. The converse inclusion is evident by Proposition \ref{lemdivi}\ref{lemdivi3}.
\item [] \ref{mincor2}$\Rightarrow$ \ref{mincor3}: It is straightforward by Proposition \ref{lemdivi}\ref{lemdivi1}.
\item [] \ref{mincor3}$\Rightarrow$ \ref{mincor1}: Let $Q$ be a prime filter containing $F$ such that $Q\subseteq P$. Consider $x\in P$. So $(F:x)\nsubseteq P$ and it implies that $x\in D_{F}(P)\subseteq D_{F}(Q)\subseteq Q$ and this shows that $P=Q$.
\end{proof}
\begin{lemma}\label{minmdfp}
Let $\mathfrak{A}$ be a residuated lattice, $F$ be a filter and $P$ be a prime filter of $\mathfrak{A}$. Then any $D_{F}(P)$-minimal prime filter is contained in $P$.
\end{lemma}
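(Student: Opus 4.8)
The plan is to read $\mathfrak{m}$ off through the $\vee$-closed description of minimal prime filters in Theorem \ref{1mineq} and then run a maximality argument, after first reducing to the situation $F\subseteq P$.

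First I would dispose of a degenerate case. If no $D_{F}(P)$-minimal prime filter exists the statement is vacuous, so let $\mathfrak{m}$ be one. Then $\mathfrak{m}$ is a proper (prime) filter with $D_{F}(P)\subseteq \mathfrak{m}$, hence $0\notin D_{F}(P)$. Since $(F:0)=\{b\mid 0\vee b\in F\}=F$, we have $0\in D_{F}(P)$ if and only if $F\nsubseteq P$; therefore $0\notin D_{F}(P)$ forces $F\subseteq P$, and Proposition \ref{lemdivi}\ref{lemdivi3} then yields $D_{F}(P)\subseteq P$. This reduction is the first key point: without it one cannot compare membership in $P$ with membership in $D_{F}(P)$.

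Next, recalling (a routine check using \ref{res1} together with $x\odot y\leq x\wedge y$) that $D_{F}(P)$ is a filter, Theorem \ref{1mineq} tells us that $\mathfrak{m}^{c}$ is a $\vee$-closed subset maximal with respect to not meeting $D_{F}(P)$. To prove $\mathfrak{m}\subseteq P$ it suffices, contrapositively, to show $a\in\mathfrak{m}^{c}$ for every $a\notin P$. So fix $a\notin P$ and form the $\vee$-closed set generated by $\mathfrak{m}^{c}\cup\{a\}$; since $\mathfrak{m}^{c}$ is already $\vee$-closed (Remark \ref{primclos}), this set is $\mathscr{C}=\mathfrak{m}^{c}\cup\{a\}\cup\{a\vee s\mid s\in\mathfrak{m}^{c}\}$.

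The heart of the argument, and the step I expect to be the main obstacle, is to verify $\mathscr{C}\cap D_{F}(P)=\emptyset$. For $\mathfrak{m}^{c}$ this is clear since $D_{F}(P)\subseteq\mathfrak{m}$; for $a$ it follows from $D_{F}(P)\subseteq P$ and $a\notin P$. In the remaining case, suppose $a\vee s\in D_{F}(P)$ for some $s\in\mathfrak{m}^{c}$; then there is $d\notin P$ with $s\vee(a\vee d)=(a\vee s)\vee d\in F$, and as $a\notin P$ and $d\notin P$, primeness of $P$ gives $a\vee d\notin P$, so $a\vee d\in (F:s)\setminus P$ and hence $s\in D_{F}(P)\subseteq\mathfrak{m}$, contradicting $s\in\mathfrak{m}^{c}$. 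Thus $\mathscr{C}$ is a $\vee$-closed set containing $\mathfrak{m}^{c}$ and missing $D_{F}(P)$, so the maximality of $\mathfrak{m}^{c}$ forces $\mathscr{C}=\mathfrak{m}^{c}$; in particular $a\in\mathfrak{m}^{c}$. This proves $P^{c}\subseteq\mathfrak{m}^{c}$, that is, $\mathfrak{m}\subseteq P$. Everything beyond the reduction to $F\subseteq P$ and this last $\vee$-closed computation is routine bookkeeping.
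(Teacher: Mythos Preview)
Your argument is correct. The paper's proof is shorter and proceeds by direct contradiction via Theorem~\ref{mincor} rather than Theorem~\ref{1mineq}: assuming $x\in\mathfrak{m}\setminus P$, minimality gives $x\in D_{D_F(P)}(\mathfrak{m})$, so there is $y\notin\mathfrak{m}$ with $x\vee y\in D_F(P)$; unwinding, some $z\notin P$ has $x\vee y\vee z\in F$, and since $x\vee z\notin P$ one gets $y\in(F:x\vee z)\subseteq D_F(P)\subseteq\mathfrak{m}$, contradicting $y\notin\mathfrak{m}$. Your maximal $\vee$-closed framing is a dual packaging of the same idea: the decisive step in both proofs is the observation that if $u\notin P$ and $u\vee v\in D_F(P)$ then $v\in D_F(P)$ (your case ``$a\vee s$'', the paper's passage from $x\vee y\in D_F(P)$ to $y\in D_F(P)$). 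The paper's route avoids your explicit reduction to $F\subseteq P$, since it only uses $D_F(P)\subseteq\mathfrak{m}$ and never $D_F(P)\subseteq P$; on the other hand, your version makes explicit that $D_F(P)$ is a filter, a point the paper relies on silently when invoking Theorem~\ref{mincor}.
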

\begin{proof}
Let $\mathfrak{m}$ be a $D_{F}(P)$-minimal prime filter and $x\in \mathfrak{m}\setminus P$.  By Theorem \ref{mincor} we have $x\in D_{D_{F}(P)}(\mathfrak{m})$ and it implies that $x\vee y\in D_{F}(P)$ for some $y\notin \mathfrak{m}$. So there exists $z\notin P$ such that $z\in (F:x\vee y)$. It is easy to see that $y\in (F:x\vee z)\subseteq D_{F}(P)\subseteq \mathfrak{m}$ and it leads us to a contradiction.
\end{proof}
\begin{lemma}\label{minpridfp}
Let $\mathfrak{A}$ be a residuated lattice, $F$ be a filter and $P$ be a prime filter. We have
\[Min_{D_{F}(P)}(\mathfrak{A})=\{\mathfrak{m}|\mathfrak{m}\in Min_{F}(\mathfrak{A}),~\mathfrak{m}\subseteq P\}.\]
\end{lemma}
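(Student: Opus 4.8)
The plan is to prove the asserted set equality by establishing both inclusions, and the whole argument hinges on a single absorption fact that I would isolate first. The sub-claim I aim for is: if $Q$ is a prime filter with $F\subseteq Q\subseteq P$, then $D_{F}(P)\subseteq Q$. To prove it I would take $a\in D_{F}(P)$ and use the identity $D_{F}(P)=\cup_{x\notin P}(F:x)$ from Proposition \ref{lemdivi}\ref{lemdivi1} to write $a\vee x\in F\subseteq Q$ for some $x\notin P$. Since $Q\subseteq P$, this $x$ lies outside $Q$ as well; as $Q$ is prime and $a\vee x\in Q$, Proposition \ref{preqpro}\ref{preqpro3} forces $a\in Q$. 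Thus $D_{F}(P)$ is squeezed into every prime filter sitting between $F$ and $P$, which is exactly what will let me interchange the two minimality conditions.

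For the inclusion $\supseteq$, I would start with an $F$-minimal prime filter $\mathfrak{m}\subseteq P$. Since $F\subseteq\mathfrak{m}\subseteq P$, the sub-claim gives $D_{F}(P)\subseteq\mathfrak{m}$, so $\mathfrak{m}$ is a prime filter containing $D_{F}(P)$. For minimality, I would suppose $Q$ is a prime filter with $D_{F}(P)\subseteq Q\subseteq\mathfrak{m}$; then $F\subseteq Q\subseteq\mathfrak{m}$ (using $F\subseteq D_{F}(P)$), and the $F$-minimality of $\mathfrak{m}$ yields $Q=\mathfrak{m}$. Hence $\mathfrak{m}\in Min_{D_{F}(P)}(\mathfrak{A})$.

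For the inclusion $\subseteq$, I would take $\mathfrak{m}\in Min_{D_{F}(P)}(\mathfrak{A})$. Here Lemma \ref{minmdfp} immediately supplies $\mathfrak{m}\subseteq P$, and the chain $F\subseteq D_{F}(P)\subseteq\mathfrak{m}$ shows $\mathfrak{m}$ is a prime filter containing $F$. To get $F$-minimality I would suppose $Q$ is a prime filter with $F\subseteq Q\subseteq\mathfrak{m}$; then $Q\subseteq\mathfrak{m}\subseteq P$, so the sub-claim applies and gives $D_{F}(P)\subseteq Q$. Thus $D_{F}(P)\subseteq Q\subseteq\mathfrak{m}$, and the $D_{F}(P)$-minimality of $\mathfrak{m}$ forces $Q=\mathfrak{m}$. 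Hence $\mathfrak{m}$ is $F$-minimal and lies in the right-hand set.

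The only genuine obstacle is spotting and proving the sub-claim; once the identity $D_{F}(P)=\cup_{x\notin P}(F:x)$ is in hand, both inclusions collapse into the same short primeness argument, and everything else is bookkeeping with the definitions of $F$-minimal and $D_{F}(P)$-minimal prime filters (together with the ready-made containment $\mathfrak{m}\subseteq P$ from Lemma \ref{minmdfp}). I would therefore state the sub-claim explicitly at the start so that neither direction has to repeat the computation.
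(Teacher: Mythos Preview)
Your proof is correct and follows essentially the same double-inclusion strategy as the paper, including the appeal to Lemma~\ref{minmdfp} for the containment $\mathfrak{m}\subseteq P$ in the $\subseteq$ direction. The only cosmetic difference is that the paper obtains your sub-claim $D_{F}(P)\subseteq Q$ via the chain $D_{F}(P)\subseteq D_{F}(Q)\subseteq Q$ (Proposition~\ref{lemdivi}\ref{lemdivi2},\ref{lemdivi3}) and, in the $\supseteq$ minimality step, invokes the characterization $\mathfrak{m}=D_{F}(\mathfrak{m})$ from Theorem~\ref{mincor} rather than citing $F$-minimality directly.
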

\begin{proof}
Set $\mu=\{\mathfrak{m}|\mathfrak{m}\in Min_{F}(\mathfrak{A}),~\mathfrak{m}\subseteq P\}$. If $\mathfrak{m}\in \mu$, then by Proposition \ref{lemdivi}\ref{lemdivi2} and Theorem \ref{mincor} we obtain that $D_{F}(P)(\mathfrak{A})\subseteq D_{F}(\mathfrak{m})=\mathfrak{m}$. So $\mathfrak{m}$ is a prime filter containing $D_{F}(P)(\mathfrak{A})$. Assume that $\mathfrak{w}$ is a prime filter containing $D_{F}(P)(\mathfrak{A})$ such that $\mathfrak{w}\subseteq \mathfrak{m}$. Applying Proposition \ref{lemdivi}(\ref{lemdivi2} and \ref{lemdivi3}) and Theorem \ref{mincor}, we have $\mathfrak{m}=D_{F}(\mathfrak{m})\subseteq D_{F}(\mathfrak{w})\subseteq \mathfrak{w}$. It follows that $\mathfrak{w}=\mathfrak{m}$. Hence, $\mathfrak{m}\in Min_{D_{F}(P)(\mathfrak{A})}(\mathfrak{A})$.

Conversely, let $\mathfrak{m}\in Min_{D_{F}(P)(\mathfrak{A})}(\mathfrak{A})$. By Proposition \ref{lemdivi}\ref{lemdivi1} follows that $F\subseteq \mathfrak{m}$ and by Lemma \ref{minmdfp} follows that $\mathfrak{m}\subseteq P$. Suppose that $\mathfrak{w}$ is a prime filter containing $F$ such that $\mathfrak{w}\subseteq \mathfrak{m}$. Applying Proposition \ref{lemdivi}\ref{lemdivi2}, it shows that $D_{F}(P)\subseteq D_{F}(\mathfrak{w})\subseteq \mathfrak{w}$. Therefore, $\mathfrak{w}$ is a prime filter containing $D_{F}(P)$ and so $\mathfrak{w}=\mathfrak{m}$. It shows that $\mathfrak{m}$ is an $F$-minimal prime filter and so $\mathfrak{m}\in \mu$.
\end{proof}
\begin{corollary}\label{dfmpr}
Let $\mathfrak{A}$ be a residuated lattice, $F$ be a filter and $P$ be a prime filter. We have
\[D_{F}(P)=\bigcap \{\mathfrak{m}|\mathfrak{m}\in Min_{F}(\mathfrak{A}),~\mathfrak{m}\subseteq P\}.\]
\end{corollary}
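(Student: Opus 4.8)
The plan is to read the claimed identity off Lemma~\ref{minpridfp} together with Corollary~\ref{mininters}\ref{mininters2}, once one observes that $D_{F}(P)$ is itself a filter. Applying Corollary~\ref{mininters}\ref{mininters2} with $X=D_{F}(P)$ gives
\[
\mathscr{F}(D_{F}(P))=\bigcap Min_{D_{F}(P)}(\mathfrak{A}),
\]
while Lemma~\ref{minpridfp} identifies the index set on the right with $\{\mathfrak{m}\in Min_{F}(\mathfrak{A})\mid \mathfrak{m}\subseteq P\}$. Hence the whole statement collapses to the single assertion $\mathscr{F}(D_{F}(P))=D_{F}(P)$, i.e. that $D_{F}(P)$ is closed under the filter operations; the remaining work is exactly this verification. (Non-voidness is free, since $F\subseteq D_{F}(P)$ by Proposition~\ref{lemdivi}\ref{lemdivi1}.)

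For upward closure I would show that $a\leq c$ forces $(F:a)\subseteq(F:c)$: if $a\vee b\in F$ then $c\vee b\geq a\vee b$ lies in $F$ because $F$ is upward closed, so $(F:a)\nsubseteq P$ yields $(F:c)\nsubseteq P$ and $c\in D_{F}(P)$. The substantial step — and the one I expect to be the main obstacle — is closure under $\odot$. Given $a,b\in D_{F}(P)$, Proposition~\ref{lemdivi}\ref{lemdivi1} produces $x,y\notin P$ with $a\vee x,\,b\vee y\in F$. Since $F$ is a filter, $(a\vee x)\odot(b\vee y)\in F$, and expanding this product by the distributivity law \ref{res1} gives
\[
(a\odot b)\vee(a\odot y)\vee(x\odot b)\vee(x\odot y)\in F.
\]
Because each of $a\odot y,\,x\odot b,\,x\odot y$ lies below $x\vee y$, this element is below $(a\odot b)\vee(x\vee y)$, so upward closure of $F$ gives $(a\odot b)\vee(x\vee y)\in F$, that is $x\vee y\in(F:a\odot b)$. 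Finally $x,y\notin P$ and primeness of $P$ (Proposition~\ref{preqpro}\ref{preqpro3}) force $x\vee y\notin P$, whence $(F:a\odot b)\nsubseteq P$ and $a\odot b\in D_{F}(P)$.

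With $D_{F}(P)$ now known to be a filter we have $\mathscr{F}(D_{F}(P))=D_{F}(P)$, and chaining the two displayed equalities yields
\[
D_{F}(P)=\bigcap Min_{D_{F}(P)}(\mathfrak{A})=\bigcap\{\mathfrak{m}\mid \mathfrak{m}\in Min_{F}(\mathfrak{A}),~\mathfrak{m}\subseteq P\},
\]
as required. The only genuine difficulty here is the $\odot$-closure computation, which pins down why the cross terms can be absorbed into $x\vee y$ using \ref{res1} and the upward closure of $F$; everything else is bookkeeping on top of the cited lemmas.
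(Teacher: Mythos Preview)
Your proof is correct and follows exactly the route the paper takes: the paper's one-line proof simply cites Corollary~\ref{mininters} and Lemma~\ref{minpridfp}, which is precisely your chain $\mathscr{F}(D_{F}(P))=\bigcap Min_{D_{F}(P)}(\mathfrak{A})=\bigcap\{\mathfrak{m}\in Min_{F}(\mathfrak{A})\mid \mathfrak{m}\subseteq P\}$. The difference is that you explicitly supply the step $\mathscr{F}(D_{F}(P))=D_{F}(P)$ by verifying that $D_{F}(P)$ is a filter, whereas the paper leaves this entirely implicit; your $\odot$-closure computation via \ref{res1} is a genuine detail the paper never records.
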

\begin{proof}
It follows by Corollary \ref{mininters} and Lemma \ref{minpridfp}.
\end{proof}
\begin{corollary}\label{dfppr}
Let $\mathfrak{A}$ be a residuated lattice, $F$ be a filter and $P$ be a prime filter. We have
\[D_{F}(P)=\bigcap \{Q\in Spec(\mathfrak{A})|F\subseteq Q\subseteq P\}.\]
\end{corollary}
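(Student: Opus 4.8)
The plan is to derive this directly from Corollary \ref{dfmpr}, which already expresses $D_{F}(P)$ as an intersection of $F$-minimal prime filters lying below $P$; the present statement merely enlarges the family over which we intersect to all prime filters between $F$ and $P$, and the whole point is that this enlargement leaves the intersection unchanged. To set up notation I would write $\sigma=\{Q\in Spec(\mathfrak{A})\mid F\subseteq Q\subseteq P\}$ and $\mu=\{\mathfrak{m}\mid \mathfrak{m}\in Min_{F}(\mathfrak{A}),~\mathfrak{m}\subseteq P\}$, so that $D_{F}(P)=\bigcap \mu$ by Corollary \ref{dfmpr}, and the goal becomes the equality $\bigcap\sigma=\bigcap\mu$.

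For one inclusion I would observe that $\mu\subseteq\sigma$: an $F$-minimal prime filter is prime and contains $F$ by definition, and the members of $\mu$ additionally lie below $P$, so each of them belongs to $\sigma$. Intersecting over the larger family therefore produces a smaller set, which gives $\bigcap\sigma\subseteq\bigcap\mu=D_{F}(P)$.

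For the reverse inclusion I would fix an arbitrary $Q\in\sigma$ and apply Corollary \ref{primeminimal} with $X=F$ to the prime filter $Q$, which contains $F$: this yields an $F$-minimal prime filter $\mathfrak{m}$ with $\mathfrak{m}\subseteq Q$. Since $\mathfrak{m}\subseteq Q\subseteq P$, we have $\mathfrak{m}\in\mu$, whence $D_{F}(P)=\bigcap\mu\subseteq\mathfrak{m}\subseteq Q$. As $Q\in\sigma$ was arbitrary, this forces $D_{F}(P)\subseteq\bigcap\sigma$, and combining the two inclusions establishes the claimed equality.

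The argument is essentially bookkeeping once Corollaries \ref{dfmpr} and \ref{primeminimal} are available, so I do not expect a genuine obstacle; the only delicate point is the verification that $\mu$ is truly a subfamily of $\sigma$ while, conversely, every member of $\sigma$ dominates some member of $\mu$, since it is precisely this two-sided relationship between the index families that makes the two intersections coincide.
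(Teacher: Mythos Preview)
Your argument is correct and follows the same overall scaffold as the paper: both introduce $\mu$ and $\sigma$, use $\mu\subseteq\sigma$ together with Corollary~\ref{dfmpr} for the inclusion $\bigcap\sigma\subseteq D_{F}(P)$, and then argue the reverse inclusion separately.

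The only difference lies in how the reverse inclusion $D_{F}(P)\subseteq\bigcap\sigma$ is obtained. You shrink each $Q\in\sigma$ down to an $F$-minimal prime via Corollary~\ref{primeminimal} and then invoke $D_{F}(P)=\bigcap\mu$ again; the paper instead appeals directly to the monotonicity properties of $D_{F}$ from Proposition~\ref{lemdivi} (parts \ref{lemdivi2} and \ref{lemdivi3}) to get $D_{F}(P)\subseteq D_{F}(Q)\subseteq Q$ for each $Q\in\sigma$, without revisiting minimal primes. Both routes are one line; yours stays within the ``intersection of minimal primes'' picture established in Corollary~\ref{mininters}, while the paper's is marginally more self-contained since it avoids a second appeal to Corollary~\ref{dfmpr}.
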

\begin{proof}
Set $\mu=\{\mathfrak{m}|\mathfrak{m}\in Min_{F}(\mathfrak{A}),~\mathfrak{m}\subseteq P\}$ and $\sigma=\{Q\in Spec(\mathfrak{A})|F\subseteq Q\subseteq P\}$. Since $\mu\subseteq \Sigma$ so we have $\bigcap \sigma\subseteq \bigcap \mu=D_{F}(P)$. On the other hand, by Proposition \ref{chauspro} follows that $D_{F}(P)\subseteq \bigcap \sigma$ and it shows the result.
\end{proof}


\subsection{Topology}

In this section we recall some definitions, properties and results relative to topology.

A set $A$ with a family $\tau$ of its subsets is called a
topological space, denoted by $(A;\tau)$, if $\emptyset,A\in\tau$ and it is closed under finite intersection and arbitrary unions. The set of all topologies on $A$ will be denoted by $\tau(A)$. It is obvious that $(\mathcal{P}(A);\tau(A))$ is a closed set system. The closure operator associated with the closed set system $(\mathcal{P}(A);\tau(A))$ is denoted by $\tau^{\mathfrak{A}}:\mathcal{P}^2(A)\longrightarrow\mathcal{P}^2(A)$. Thus for all subset $X$ of $\mathcal{P}(A)$, $\tau^{A}(X)=\bigcap\{\tau\in \tau(A)\mid X\subseteq \tau\}$ is the smallest topology of $A$ that contains $X$. $\tau^{A}(X)$ is called \textit{the topology on $A$ generated by $X$} and $X$ is called a \textit{subbase} for the topological space $(A;\tau^{A}(X))$.  It is well-known that if $(A;\tau)$ is a topological space and $X$ is a subset of $\tau$, then $X$ is a subbase of $A$ if and only if $\tau^{A}(X)=\tau$. When there is no ambiguity $\tau^{A}(X)$ shall be denoted by $\tau(X)$.

The members of $\tau$ are called \textit{open sets} of $A$ and their complements are called \textit{closed sets} of $A$. If $X$ is a subset of $A$, the smallest closed set containing $X$ is called the \textit{closure} of $X$ and is denoted by $\overline{X}$, $cl_{\tau}(X)$ or $cl_{A}(X)$. It is well-known that $a\in \overline{X}$ if and only if any open set containing $a$ meets $X$. An open set containing $a\in A$ is called a \textit{neighbourhood} of $a$. The set of all neighbourhoods of $a\in A$ will be denoted by $\mathscr{N}_{a}$. A family $\mathscr{B}\subseteq \tau$ is said to be a \textit{base (base for close sets)} of $\tau$ if each open (close) set is the union (intersection) of members of $\mathscr{B}$. One can see that a family $\mathscr{B}$ of subsets of $A$ is a base for a topological space if for all $N_1,N_2\in\mathscr{B}$ and any element $a\in N_1\cap N_2$ there is $N\in\mathscr{B}$ such that $a\in N\subseteq N_1\cap N_2$ and $\cup\mathscr{B}=A$. This topological space is denoted by $(A;\tau_{\mathscr{B}})$ and it is called the generated topology by $\mathscr{B}$ on $A$ \citep{eng}.

Let $(A;\tau)$ and $(B;\varsigma)$ be topological spaces. A function $f:A\longrightarrow B$ is called a \textit{continuous} map if $f^{-1}(U)\in\tau$ for all $U\in\varsigma$. $f$ is called an \textit{open} map if $f(U)\in\varsigma$ for all $U\in\tau$.
\begin{proposition}\label{comcont}\citep{eng}
Let $(A;\tau)$ and $(B;\varsigma)$ be topological spaces. The following assertions hold:
\begin{enumerate}
  \item [$(1)$ \namedlabel{comcont1}{$(1)$}]$f:A\longrightarrow B$ is continuous if and only if $f^{-1}(Y)\subseteq A$ is closed, for all closed set $Y\subseteq B$;
  \item [$(2)$ \namedlabel{comcont2}{$(2)$}] $f:A\longrightarrow B$ is continuous if and only if $f(\overline{X})\subseteq \overline{f(X)}$, for all $X\subseteq A$;
  \item [$(3)$ \namedlabel{comcont3}{$(3)$}] if $\mathscr{B}$ is a base for $B$, then $f:A\longrightarrow B$ is continuous if and only if $f^{-1}(N)$ is open in $A$, for all $N\in \mathscr{B}$;
  \item [$(4)$ \namedlabel{comcont4}{$(4)$}] if $X$ is a subbase for $B$, then $f:A\longrightarrow B$ is continuous if and only if $f^{-1}(N)$ is open in $A$, for all $N\in X$.
\end{enumerate}
\end{proposition}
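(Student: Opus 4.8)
The plan is to establish the four equivalences in sequence, each new part resting on the one before it, so that the only genuinely topological input is the open--closed duality hidden in the definition of continuity; everything else is bookkeeping with preimages. I would treat \ref{comcont1} as the foundation, derive \ref{comcont2} from it through the closure operator, and then obtain \ref{comcont3} and \ref{comcont4} by reducing an arbitrary open set to a generating family.

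First I would prove \ref{comcont1}. The key identity is $f^{-1}(B\setminus Y)=A\setminus f^{-1}(Y)$, valid for every $Y\subseteq B$. Since $Y$ is closed exactly when $B\setminus Y$ is open, and $f^{-1}(Y)$ is closed exactly when $A\setminus f^{-1}(Y)=f^{-1}(B\setminus Y)$ is open, the assertion ``$f^{-1}(Y)$ is closed for every closed $Y$'' reads verbatim as ``$f^{-1}(U)$ is open for every open $U$'', which is the definition of continuity. Because $Y\mapsto B\setminus Y$ is a bijection between the closed and the open subsets of $B$, both directions drop out simultaneously.

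Next, for \ref{comcont2} I would invoke \ref{comcont1} on each side. For the forward implication, assume $f$ is continuous and fix $X\subseteq A$; the set $\overline{f(X)}$ is closed, so by \ref{comcont1} its preimage $f^{-1}(\overline{f(X)})$ is a closed set containing $X$, hence it contains $\overline{X}$, and this rearranges to $f(\overline{X})\subseteq\overline{f(X)}$. For the converse, suppose the closure inequality holds for every $X$ and let $Y\subseteq B$ be closed; taking $X=f^{-1}(Y)$ yields $f(\overline{X})\subseteq\overline{f(X)}\subseteq\overline{Y}=Y$, whence $\overline{X}\subseteq f^{-1}(Y)=X$, so $f^{-1}(Y)$ is closed and \ref{comcont1} delivers continuity. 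The forward directions of \ref{comcont3} and \ref{comcont4} are immediate, since base and subbase members are themselves open. For the converse of \ref{comcont3}, I would write an arbitrary open $U\subseteq B$ as $U=\bigcup_i N_i$ with $N_i\in\mathscr{B}$ and use $f^{-1}(\bigcup_i N_i)=\bigcup_i f^{-1}(N_i)$ to conclude that $f^{-1}(U)$ is open. The converse of \ref{comcont4} then reduces to \ref{comcont3}: every member of the base generated by a subbase $X$ is a finite intersection $N_1\cap\cdots\cap N_k$ of subbase elements, and since $f^{-1}$ commutes with finite intersections, openness of each $f^{-1}(N_j)$ propagates to the whole generated base.

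The bulk of the argument is routine manipulation of preimages, so I expect no serious difficulty. The one step demanding a little care is the converse of \ref{comcont2}: here one must remember that in general only $f(f^{-1}(Y))\subseteq Y$ holds (not equality), and that the closedness of $Y$ is what supplies $\overline{Y}=Y$. These two observations are precisely what make the test set $X=f^{-1}(Y)$ succeed, so the ``hard part'', such as it is, lies in choosing that $X$ and not in any deeper topology.
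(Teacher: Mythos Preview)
Your argument is correct and is the standard textbook proof of these equivalences. Note, however, that the paper does not supply its own proof of this proposition: it is stated with a citation to Engelking's \emph{General Topology} and used as a background fact, so there is nothing in the paper to compare your proof against.
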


  Now, we recalling some separation axioms in a topological space. Let $(A;\tau)$ be a topological space.
  \begin{itemize}
  \item  $A$ is called a $T_0$ \textit{space} if for any pair of distinct points, there exists an open set containing exactly one of these points.
  \item  $A$ is called a $T_1$ \textit{space} if for every pair of distinct points, there exists an open set containing of each point not containing the other.
  \item  $A$ is called a $T_2$ \textit{space (Hausdorff space)} if any two distinct points are separated by open sets.
  \item  $A$ is called a \textit{normal} if any two disjoint closed subsets of $A$ are separated by open sets.
  \item  $A$ is called a $T_4$ \textit{space (normal Hausdorff space )} if it is both $T_1$ and normal.
\end{itemize}

\begin{remark}\label{t1pro}
\begin{enumerate}
  \item $A$ is a $T_1$ space if and only if every point $a\in A$ is the intersection of all its neighbourhoods if and only if for every $a\in A$ the set $\{a\}$ is closed.
  \item In the above definitions we can apply ``basic open set" instead of ``open set".
\end{enumerate}
\end{remark}

  A topological space $(A;\tau)$ is called a \textit{compact space} provided that every family of closed subsets of $A$, which has the finite intersection property (i.e., every finite subfamily has a nonempty intersection), has nonempty intersection.
  \begin{lemma}\label{norhau}\citep{eng}
  If $A$ is a compact space, $B$ is a Hausdorff space and  $f:A\longrightarrow B$ is a continuous map, then $f$ is a close map.
  \end{lemma}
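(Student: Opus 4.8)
The plan is to reduce the statement to three standard facts about compactness, each proved directly from the finite-intersection-property definition adopted above. Let $C$ be an arbitrary closed subset of $A$; I must show that $f(C)$ is closed in $B$. The three facts are: (i) a closed subspace of a compact space is compact; (ii) the continuous image of a compact space is compact; (iii) a compact subspace of a Hausdorff space is closed. Granting these, the chain is immediate: by (i) the set $C$ is compact, by (ii) the image of the restriction $f|_C\colon C\to B$, namely $f(C)$, is compact, and by (iii) $f(C)$ is closed in the Hausdorff space $B$, which is exactly what is required.

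For (i) I would take a family of sets closed in $C$ having the finite intersection property; since $C$ is closed in $A$, each such set is also closed in $A$, so the family has nonempty intersection by compactness of $A$, proving $C$ compact. For (ii), given a family $\{F_i\}$ of closed subsets of $f(A)$ with the finite intersection property, I would pull back along $f$: each $f^{-1}(F_i)$ is closed in $A$ (the closedness of $F_i$ in the subspace $f(A)$ lifts through continuity of $f$), and the preimages inherit the finite intersection property, because any point witnessing $F_{i_1}\cap\cdots\cap F_{i_n}\neq\emptyset$ lies in $f(A)$ and hence has a preimage lying in $f^{-1}(F_{i_1})\cap\cdots\cap f^{-1}(F_{i_n})$. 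Compactness of $A$ then yields a point $x\in\bigcap_i f^{-1}(F_i)$, whence $f(x)\in\bigcap_i F_i$, so that intersection is nonempty and $f(A)$ is compact.

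The step I expect to be the main obstacle is (iii), since the finite-intersection-property formulation makes ``compact implies closed'' less transparent than the usual open-cover argument, and one must engineer the right family of closed sets. Let $K$ be compact in the Hausdorff space $B$ and suppose $y\in\overline{K}$; I would show $y\in K$, so that $\overline{K}=K$. Consider the family $\{K\cap\overline{V}\mid V\in\mathscr{N}_y\}$ of sets closed in $K$. It has the finite intersection property: for $V_1,\dots,V_n\in\mathscr{N}_y$ the intersection $V=V_1\cap\cdots\cap V_n$ is again a neighbourhood of $y$, it meets $K$ because $y\in\overline{K}$, and $K\cap\overline{V}$ is nonempty and contained in each $K\cap\overline{V_j}$. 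By compactness of $K$ there is a point $z\in K$ with $z\in\overline{V}$ for every $V\in\mathscr{N}_y$. If $z\neq y$, then Hausdorffness of $B$ would furnish disjoint neighbourhoods $W\ni z$ and $V\ni y$, and $W$ would witness $z\notin\overline{V}$, contradicting the choice of $z$. Hence $z=y$, so $y\in K$, $K$ is closed, and the lemma follows.
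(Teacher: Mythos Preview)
Your argument is correct and follows the standard route: closed subsets of compact spaces are compact, continuous images of compact sets are compact, and compact subsets of Hausdorff spaces are closed. Your care in phrasing each step through the finite-intersection-property definition adopted in the paper is appropriate, and the argument for (iii) via the family $\{K\cap\overline{V}\mid V\in\mathscr{N}_y\}$ is clean.

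However, note that the paper does not actually prove this lemma at all: it is stated with the citation \citep{eng} to Engelking's \emph{General Topology} and left without proof, as a standard background fact from general topology. So there is no ``paper's own proof'' to compare against; you have supplied a proof where the authors chose simply to quote the result. Your write-up would serve perfectly well as a self-contained justification, but strictly speaking nothing was expected here beyond the citation.
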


\section{The hull-kernel topology}\label{sec3}

In this section we introduce and study the notion of hull-kernel topology in residuated lattices.
\begin{definition}\label{defhulker}
Let $\mathfrak{A}$ be a residuated lattice and $\Gamma$ be a collection of filters in $\mathfrak{A}$. We have the following definitions:
\begin{itemize}
  \item  [$(1)$ \namedlabel{defhulker1}{$(1)$}] A mapping $h_{\Gamma}:\mathcal{P}(A)\longrightarrow \mathcal{P}(\Gamma)$ defined by $h_{\Gamma}(X)=\{F\in \Gamma|X\subseteq F\}$ for any $X\subseteq A$, is called a $\Gamma$-\textit{hull operator} on $\mathfrak{A}$;
  \item  [$(2)$ \namedlabel{defhulker2}{$(2)$}] a mapping $k_{\Gamma}:\mathcal{P}(\Gamma)\longrightarrow \mathcal{P}(A)$ defined by $k_{\Gamma}(\mathcal{F})=\cap \mathcal{F}$ for any $\mathcal{F}\subseteq \Gamma$, is called a $\Gamma$-\textit{kernel operator} on $\mathfrak{A}$.
\end{itemize}

For any $x\in A$, $h_{\Gamma}(\{x\})$ is denoted by $h_{\Gamma}(x)$ and when there is no ambiguity we drop the subscript $\Gamma$.
\end{definition}
\begin{proposition}\label{1hopro}
Let $\mathfrak{A}$ be a residuated lattice and $\Gamma$ be a collection of filters in $\mathfrak{A}$. The following assertions hold for any $X,Y\subseteq A$ and $\mathcal{F}\subseteq \Gamma$:
\begin{enumerate}
\item  [$(1)$ \namedlabel{1hopro1}{$(1)$}] $X\subseteq k(\mathcal{F})$ if and only if $\mathcal{F}\subseteq h(X)$;
\item  [$(2)$ \namedlabel{1hopro2}{$(2)$}] $h(X)=h(\mathscr{F}(X))$;
\item  [$(3)$ \namedlabel{1hopro3}{$(3)$}] $h(X)=\Gamma$ if and only if $\mathscr{F}(X)\subseteq k(\Gamma)$. In particular, $h(\emptyset)=h(1)=\Gamma$;
\item  [$(4)$ \namedlabel{1hopro4}{$(4)$}] if $A\notin \Gamma$, then $h(A)=h(0)=\emptyset$;
\item  [$(5)$ \namedlabel{1hopro5}{$(5)$}] $h(X)\cup h(Y)\subseteq h(\mathscr{F}(X)\cap\mathscr{F}(Y))$.
\end{enumerate}
\end{proposition}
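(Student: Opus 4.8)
The plan is to treat all five assertions as direct consequences of unwinding the definitions of $h$ and $k$, with the single recurring tool being the universal property of generated filters: for any filter $F$ and any $X\subseteq A$, one has $X\subseteq F$ if and only if $\mathscr{F}(X)\subseteq F$. I would prove \ref{1hopro1} first, since it is the Galois-connection property on which the remaining parts lean. Here the argument is purely set-theoretic: $X\subseteq k(\mathcal{F})=\bigcap\mathcal{F}$ says precisely that $X\subseteq F$ for every $F\in\mathcal{F}$, and since $\mathcal{F}\subseteq\Gamma$ this is exactly the statement $\mathcal{F}\subseteq h(X)=\{F\in\Gamma\mid X\subseteq F\}$.

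For \ref{1hopro2} I would invoke the universal property directly: a member $F\in\Gamma$ satisfies $X\subseteq F$ if and only if $\mathscr{F}(X)\subseteq F$, so the two hull sets coincide. Assertion \ref{1hopro3} then follows by combining the first two parts. Observe that $h(X)\subseteq\Gamma$ always holds, so $h(X)=\Gamma$ is equivalent to $\Gamma\subseteq h(X)$; applying \ref{1hopro1} with $\mathcal{F}=\Gamma$ turns this into $X\subseteq k(\Gamma)$, and since $k(\Gamma)=\bigcap\Gamma$ is itself a filter, the universal property upgrades this to $\mathscr{F}(X)\subseteq k(\Gamma)$. The particular case is immediate once one notes $\mathscr{F}(\emptyset)=\mathscr{F}(1)=\{1\}$, which sits inside every filter and hence inside $k(\Gamma)$.

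Assertion \ref{1hopro4} rests on the observation recorded in Section \ref{sec2} that a filter is proper exactly when it omits $0$, together with the fact that $A\subseteq F$ or $0\in F$ each force $F=A$. Since $A\notin\Gamma$ by hypothesis, no member of $\Gamma$ can contain $A$ or $0$, so both hulls are empty. Finally, for \ref{1hopro5} I would take $F\in h(X)\cup h(Y)$; without loss of generality $X\subseteq F$, whence $\mathscr{F}(X)\subseteq F$ and therefore $\mathscr{F}(X)\cap\mathscr{F}(Y)\subseteq\mathscr{F}(X)\subseteq F$, placing $F$ in $h(\mathscr{F}(X)\cap\mathscr{F}(Y))$.

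I do not anticipate any genuine obstacle: every step is a one-line unfolding of a definition. The only point requiring the slightest care is the consistent use of the universal property of $\mathscr{F}(-)$, which silently converts statements about containment of a generating set into statements about containment of the generated filter; this is precisely what links the raw definition of $h$ to the filter-theoretic formulations appearing in parts \ref{1hopro2}, \ref{1hopro3}, and \ref{1hopro5}.
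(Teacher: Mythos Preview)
Your proposal is correct and follows essentially the same approach as the paper: both unwind the definitions of $h$ and $k$ directly, use the universal property of $\mathscr{F}(X)$ for \ref{1hopro2} and \ref{1hopro5}, and derive \ref{1hopro3} from \ref{1hopro1} and \ref{1hopro2}. Your treatment of \ref{1hopro3} and \ref{1hopro4} is slightly more explicit than the paper's (which simply says ``follows by \ref{1hopro1} and \ref{1hopro2}'' and ``obvious'', respectively), but the content is identical.
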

\begin{proof}
\begin{enumerate}
  \item [\ref{1hopro1}:] If $X\subseteq k(\mathcal{F})$, then for any $F\in \mathcal{F}$ follows that $X\subseteq F$ and so $F\in h(X)$. Conversely, $\mathcal{F}\subseteq h(X)$ implies that $X\subseteq F$ for any $F\in \mathcal{F}$ and it states that $X\subseteq k(\mathcal{F})$.
  \item [\ref{1hopro2}:] It follows by this fact that $\mathscr{F}(X)$ is the least filter containing $X$.
  \item [\ref{1hopro3}:] It follows by \ref{1hopro1} and \ref{1hopro2}.
  \item [\ref{1hopro4}:] It is obvious.
  \item [\ref{1hopro5}:]  Let $X$ and $Y$ be two subsets of $A$ and $F\in h(X)\cup h_(Y)$ for some $F\in \Gamma$. So $X\subseteq F$ or $Y\subseteq F$ and it implies that $\mathscr{F}(X)\subseteq F$ or $\mathscr{F}(Y)\subseteq F$. It states that $\mathscr{F}(X)\cap \mathscr{F}(X)\subseteq F$ and it follows that $F\in h(\mathscr{F}(X)\cap \mathscr{F}(X))$.
\end{enumerate}
\end{proof}

Recalling that, a pair $(f,g)$ is called a \textit{(contravariant or antitone) Galois connection} between posets $\mathscr{A}=(A;\leq)$ and $\mathscr{B}=(B;\preccurlyeq)$ if $f:A\longrightarrow B$ and $g:B\longrightarrow A$ are functions such that for all $a\in A$ and $b\in B$, $a\leq g(b)$ if and only if $b\preccurlyeq f(a)$. It is well known that $(f,g)$ is a Galois connection if and only if $gf,fg$ are inflationary and $f,g$ are antitone \citep[Theorem 2]{gar} .
\begin{proposition}\label{fgpropep}\cite{gar}
Let $(f,g)$ be a Galois connection between posets $\mathscr{A}$ and $\mathscr{B}$. The following assertions hold:
\begin{enumerate}
  \item  [$(1)$ \namedlabel{fgpropep1}{$(1)$}] $fgf=f$ and $gfg=g$;
  \item  [$(2)$ \namedlabel{fgpropep2}{$(2)$}] if $\vee X$ exists for some $X\subseteq A$ then $\wedge f(X)$ exists and $\wedge f(X)=f(\vee X)$;
  \item  [$(3)$ \namedlabel{fgpropep3}{$(3)$}] $gf$ is a closure operator on $\mathscr{A}$ and $\mathscr{C}_{gf}=g(B)$.
\end{enumerate}
\end{proposition}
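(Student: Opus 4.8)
The plan is to lean entirely on the characterization quoted just before the statement, namely that $gf$ and $fg$ are inflationary and that $f$ and $g$ are antitone; combined with the antisymmetry of the two orders, each item should fall out quickly. I would also exploit the fact that the defining biconditional $a\leq g(b)\iff b\preccurlyeq f(a)$ is symmetric in $f$ and $g$, so that every claim about $f$ has a dual claim about $g$ proved by the same argument.

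For \ref{fgpropep1}, I would start from $a\leq gf(a)$ (inflationariness of $gf$) and apply the antitone map $f$ to obtain $fgf(a)\preccurlyeq f(a)$. For the reverse inequality, instantiate the inflationariness of $fg$ at $b=f(a)$, giving $f(a)\preccurlyeq fg(f(a))=fgf(a)$. Antisymmetry of $\preccurlyeq$ then yields $fgf=f$, and the identity $gfg=g$ follows by interchanging the roles of $f$ and $g$.

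For \ref{fgpropep2}, I would set $s=\vee X$ and argue that $f(s)$ is the meet of $f(X)$. That $f(s)$ is a lower bound of $f(X)$ is immediate from antitonicity, since $x\leq s$ gives $f(s)\preccurlyeq f(x)$ for every $x\in X$. To see it is the \emph{greatest} lower bound, take any lower bound $b$ of $f(X)$; then $b\preccurlyeq f(x)$ for all $x$, which by the defining biconditional reads $x\leq g(b)$, so $g(b)$ is an upper bound of $X$ and hence $s\leq g(b)$ by the universal property of $\vee X$. Translating back through the biconditional gives $b\preccurlyeq f(s)$, as required; thus $\wedge f(X)$ exists and equals $f(\vee X)$.

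For \ref{fgpropep3}, the three closure-operator axioms are checked in turn: $gf$ is inflationary by hypothesis, it is monotone as a composite of two antitone maps, and it is idempotent because $gf\circ gf=g(fgf)=gf$ using \ref{fgpropep1}. For the description of the closed elements, a closed $a$ satisfies $a=gf(a)\in g(B)$, while conversely any $a=g(b)\in g(B)$ is fixed since $gf(g(b))=gfg(b)=g(b)$, again by \ref{fgpropep1}; hence $\mathscr{C}_{gf}=g(B)$. The only point demanding genuine care is \ref{fgpropep2}, where one must pass from $\mathscr{B}$ back to $\mathscr{A}$ and then forward again through the biconditional in exactly the right direction, keeping track of which order reverses; everything else is a routine application of antitonicity together with the two inflationary inequalities.
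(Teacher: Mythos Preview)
Your argument is correct and is the standard verification of these Galois-connection facts. Note, however, that the paper does not supply its own proof of this proposition: it is stated with a citation to \cite{gar} and left unproved, so there is nothing in the paper to compare your approach against. Your write-up would serve perfectly well as the missing justification.
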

\begin{proposition}\label{gchkpro}
Let $\mathfrak{A}$ be a residuated lattice and $\Gamma$ be a collection of filters in $\mathfrak{A}$. The pair $(h,k)$ is a Galois connection on $\mathcal{P}(\Gamma)$.
\end{proposition}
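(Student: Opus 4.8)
The plan is to read off the Galois connection directly from the equivalence already recorded in Proposition \ref{1hopro}\ref{1hopro1}. Recall the definition of an antitone Galois connection between posets: a pair $(f,g)$ with $f:\mathscr{A}\to\mathscr{B}$ and $g:\mathscr{B}\to\mathscr{A}$ such that $a\leq g(b)$ if and only if $b\preccurlyeq f(a)$. Here I would take $\mathscr{A}=(\mathcal{P}(A);\subseteq)$ and $\mathscr{B}=(\mathcal{P}(\Gamma);\subseteq)$, with $f=h$ and $g=k$; both $h$ and $k$ are genuine maps between these power sets by Definition \ref{defhulker}.

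The single substantive identity to invoke is Proposition \ref{1hopro}\ref{1hopro1}, which asserts that for every $X\subseteq A$ and every $\mathcal{F}\subseteq\Gamma$ we have $X\subseteq k(\mathcal{F})$ if and only if $\mathcal{F}\subseteq h(X)$. Reading both $\leq$ and $\preccurlyeq$ as inclusion, this is verbatim the defining biconditional $X\leq k(\mathcal{F})\Leftrightarrow\mathcal{F}\preccurlyeq h(X)$, so $(h,k)$ is an antitone Galois connection. No further work is required.

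As a cross-check---and an alternative route not relying on \ref{1hopro1}---one could instead verify the four-part criterion quoted just before Proposition \ref{fgpropep}, namely that $h$ and $k$ are antitone and that $kh$ and $hk$ are inflationary. Antitonicity is immediate from the defining formulas, since enlarging $X$ can only shrink $h(X)=\{F\in\Gamma\mid X\subseteq F\}$, and enlarging $\mathcal{F}$ can only shrink $k(\mathcal{F})=\cap\mathcal{F}$. Inflationarity of $kh$ says $X\subseteq\bigcap\{F\in\Gamma\mid X\subseteq F\}$, which holds because every $F$ in that intersection contains $X$; inflationarity of $hk$ says that every $F\in\mathcal{F}$ satisfies $\cap\mathcal{F}\subseteq F$, which is clear. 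There is no real obstacle in either route: the content is entirely bookkeeping, and the only point worth stating explicitly is the identification of the two posets and the direction of the maps, so that the biconditional of \ref{1hopro1} lines up with the definition of a Galois connection.
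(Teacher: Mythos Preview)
Your proof is correct and follows exactly the paper's approach: the paper's proof consists of the single sentence ``It follows by Proposition \ref{1hopro}\ref{1hopro1},'' which is precisely the route you take. Your additional cross-check via antitonicity and inflationarity is sound but unnecessary for the present purpose.
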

\begin{proof}
It follows by Proposition \ref{1hopro}\ref{1hopro1}.
\end{proof}
\begin{corollary}\label{hukerprop}
Let $\mathfrak{A}$ be a residuated lattice and $\Gamma$ be a collection of filters in $\mathfrak{A}$. The following assertions hold for any $X\subseteq A$ and $\mathcal{F}\subseteq \Gamma$:
\begin{enumerate}
  \item  [$(1)$ \namedlabel{hukerprop1}{$(1)$}] $kh$ and $hk$ are inflationary;
  \item  [$(2)$ \namedlabel{hukerprop2}{$(2)$}] $h$ and $k$ are antitone;
  \item  [$(3)$ \namedlabel{hukerprop3}{$(3)$}] $\cap_{X\in \mathscr{X}}h(X)=h(\cup \mathscr{X})$ for any $\mathscr{X}\subseteq \mathcal{P}(A)$;
  \item  [$(4)$ \namedlabel{hukerprop4}{$(4)$}] $\cap_{\mathcal{F}\in \mathscr{F}}k(\mathcal{F})=k(\cup \mathscr{F})$ for any $\mathscr{F}\subseteq \mathcal{P}(\Gamma)$;
  \item  [$(5)$ \namedlabel{hukerprop5}{$(5)$}] $hkh(X)=h(X)$ and $khk(\mathcal{F})=k(\mathcal{F})$;
  \item  [$(6)$ \namedlabel{hukerprop6}{$(6)$}] $hk$ is a closure operator on $\Gamma$ and $\mathscr{C}_{hk}=\{h(X)|X\subseteq A\}$.
\end{enumerate}
\end{corollary}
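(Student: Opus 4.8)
The plan is to read everything off the Galois connection $(h,k)$ established in Proposition \ref{gchkpro}, feeding it into the abstract machinery of Proposition \ref{fgpropep} and the cited equivalence that $(f,g)$ is a Galois connection exactly when $gf,fg$ are inflationary and $f,g$ are antitone \citep[Theorem 2]{gar}. Throughout I would fix the identification $f=h\colon\mathcal{P}(A)\to\mathcal{P}(\Gamma)$ and $g=k\colon\mathcal{P}(\Gamma)\to\mathcal{P}(A)$, so that $gf=kh$ is an operator on $\mathcal{P}(A)$ while $fg=hk$ is an operator on $\mathcal{P}(\Gamma)$; the only thing to watch is this composition bookkeeping. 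With this in place, \ref{hukerprop1} and \ref{hukerprop2} are immediate: since $(h,k)$ is a Galois connection, the cited equivalence gives at once that $kh=gf$ and $hk=fg$ are inflationary and that $h=f$ and $k=g$ are antitone.

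For \ref{hukerprop3} I would apply Proposition \ref{fgpropep}\ref{fgpropep2} to $f=h$: in $\mathcal{P}(A)$ the join of a family $\mathscr{X}$ is $\cup\mathscr{X}$, and in $\mathcal{P}(\Gamma)$ the meet of $\{h(X)\mid X\in\mathscr{X}\}$ is the intersection, whence $h(\cup\mathscr{X})=\bigcap_{X\in\mathscr{X}}h(X)$. (Equivalently one can unwind the definitions directly: $F\in h(\cup\mathscr{X})$ iff $\cup\mathscr{X}\subseteq F$ iff $X\subseteq F$ for every $X\in\mathscr{X}$ iff $F\in\bigcap_{X}h(X)$.) Item \ref{hukerprop4} is the dual statement, obtained by applying the symmetric form of Proposition \ref{fgpropep}\ref{fgpropep2} to $g=k$, which is legitimate because swapping the two posets and the two maps turns a Galois connection into a Galois connection; concretely it also drops out of the definition $k=\cap$, since $k(\cup\mathscr{F})=\bigcap(\cup\mathscr{F})=\bigcap_{\mathcal{F}\in\mathscr{F}}(\bigcap\mathcal{F})=\bigcap_{\mathcal{F}\in\mathscr{F}}k(\mathcal{F})$.

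Finally, \ref{hukerprop5} is precisely the pair of identities $fgf=f$ and $gfg=g$ of Proposition \ref{fgpropep}\ref{fgpropep1}, that is, $hkh=h$ and $khk=k$; and \ref{hukerprop6} follows by applying the symmetric form of Proposition \ref{fgpropep}\ref{fgpropep3} to the composition $fg=hk$, which shows that $hk$ is a closure operator on $\mathcal{P}(\Gamma)$ whose family of closed elements is $f(\mathcal{P}(A))=\{h(X)\mid X\subseteq A\}$. I expect no genuine obstacle here: the whole corollary is bookkeeping over a single Galois connection. The one point requiring a little care is that \ref{hukerprop4} and \ref{hukerprop6} invoke the $g$-side (dual) versions of Proposition \ref{fgpropep}, which are justified precisely because the defining adjunction $X\subseteq k(\mathcal{F})\iff\mathcal{F}\subseteq h(X)$ of Proposition \ref{1hopro}\ref{1hopro1} is symmetric in the two maps.
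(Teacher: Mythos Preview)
Your proposal is correct and follows exactly the paper's approach: the paper's proof is the single line ``The proof is an immediate consequence of Proposition \ref{fgpropep} and Corollary \ref{gchkpro},'' and you have simply unpacked which clause of Proposition \ref{fgpropep} yields each item, together with the legitimate observation that the dual forms needed for \ref{hukerprop4} and \ref{hukerprop6} follow from the symmetry of the adjunction in Proposition \ref{1hopro}\ref{1hopro1}. There is nothing to add.
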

\begin{proof}
The proof is an immediate consequence of Proposition \ref{fgpropep} and Corollary \ref{gchkpro}.
\end{proof}
\begin{proposition}\label{22hopro}
Let $\mathfrak{A}$ be a residuated lattice and $\Gamma$ be a collection of filters in $\mathfrak{A}$. The following assertions hold for any $x,y\in A$:
\begin{enumerate}
\item  [$(1)$ \namedlabel{22hopro1}{$(1)$}] if $x\leq y$, then $h(x)\subseteq h(y)$;
\item  [$(2)$ \namedlabel{22hopro2}{$(2)$}] $h(x)\cup h(y)\subseteq h(x\vee y)$;
\item  [$(3)$ \namedlabel{22hopro3}{$(3)$}] $h(x)\cap h(y)=h(x\odot y)$.
\end{enumerate}
\end{proposition}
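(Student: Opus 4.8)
The plan is to work directly from the unwound description of the hull operator: for $z\in A$ one has $h(z)=\{F\in\Gamma\mid z\in F\}$. All three statements will then reduce to the two defining properties of a filter, namely that a filter is upward closed (if $z\in F$ and $z\leq w$, then $w=z\vee w\in F$) and closed under $\odot$.

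For \ref{22hopro1}, I would take $F\in h(x)$, so that $x\in F$; since $x\leq y$, upward closure gives $y\in F$, whence $F\in h(y)$, proving $h(x)\subseteq h(y)$. For \ref{22hopro2} no fresh argument is needed: from $x\leq x\vee y$ and $y\leq x\vee y$, part \ref{22hopro1} yields $h(x)\subseteq h(x\vee y)$ and $h(y)\subseteq h(x\vee y)$, and taking the union gives the claim. One should note that this inclusion is in general strict, since $x\vee y\in F$ need not force $x\in F$ or $y\in F$ unless $F$ is prime (cf. Proposition \ref{preqpro}\ref{preqpro3}).

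For \ref{22hopro3} I would argue by double inclusion. If $F\in h(x)\cap h(y)$, then $x,y\in F$, so closure of $F$ under $\odot$ gives $x\odot y\in F$, i.e.\ $F\in h(x\odot y)$. Conversely, recall that in any residuated lattice $\odot$ is isotone and $1$ is its unit, so $x\odot y\leq x\odot 1=x$ and likewise $x\odot y\leq y$; hence if $x\odot y\in F$, upward closure forces $x\in F$ and $y\in F$, giving $F\in h(x)\cap h(y)$.

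I expect the only point requiring care to be the reverse inclusion in \ref{22hopro3}, where one must invoke $x\odot y\leq x$ and $x\odot y\leq y$; everything else is immediate from the monotonicity of $h$ established in \ref{22hopro1}. As an alternative, \ref{22hopro3} can be obtained formally from earlier results: $h(x)\cap h(y)=h(\{x\}\cup\{y\})$ by Corollary \ref{hukerprop}\ref{hukerprop3}, and then $h(\{x,y\})=h(\mathscr{F}(\{x,y\}))=h(\mathscr{F}(x\odot y))=h(x\odot y)$ using Proposition \ref{1hopro}\ref{1hopro2} together with the identity $\mathscr{F}(x)\veebar\mathscr{F}(y)=\mathscr{F}(x\odot y)$ from Remark \ref{genfilprop}\ref{genfilprop4}.
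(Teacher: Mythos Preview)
Your argument is correct, but it proceeds along a different route than the paper. You work directly from the definition $h(z)=\{F\in\Gamma\mid z\in F\}$ and the two filter axioms (upward closure and closure under $\odot$), whereas the paper derives all three items from the previously assembled machinery: for \ref{22hopro1} it passes through principal filters via $\mathscr{F}(y)\subseteq\mathscr{F}(x)$ (Remark~\ref{genfilprop}\ref{genfilprop2}) together with $h(X)=h(\mathscr{F}(X))$ and the antitonicity of $h$ from the Galois connection; for \ref{22hopro2} it uses $h(x)\cup h(y)\subseteq h(\mathscr{F}(x)\cap\mathscr{F}(y))$ (Proposition~\ref{1hopro}\ref{1hopro5}) combined with $\mathscr{F}(x)\cap\mathscr{F}(y)=\mathscr{F}(x\vee y)$; and for \ref{22hopro3} it uses exactly the ``alternative'' chain you sketched at the end. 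Your direct approach is more elementary and self-contained, and makes explicit precisely which filter axiom drives each inclusion; the paper's approach exploits the Galois-connection framework just set up and keeps everything at the level of generated filters, which is more in the spirit of the surrounding development. Either is perfectly adequate here.
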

\begin{proof}
\begin{enumerate}
  \item [\ref{22hopro1}:] Let $x\leq y$. By \textsc{Remark} \ref{genfilprop}\ref{genfilprop2} we have $\mathscr{F}(y)\subseteq \mathscr{F}(x)$ and by Proposition \ref{1hopro}\ref{1hopro2} and Corollary \ref{hukerprop}\ref{hukerprop2} follows that $h(x)\subseteq h(y)$.
  \item [\ref{22hopro2}:] By \textsc{Remark} \ref{genfilprop}\ref{genfilprop3} and Proposition \ref{1hopro}(\ref{1hopro2} and \ref{1hopro5}) we obtain that $h(x)\cup h(y)\subseteq h(\mathscr{F}(x)\cap\mathscr{F}(y))=h(\mathscr{F}(x\vee y))=h(x\vee y)$.
  \item [\ref{22hopro3}:] By \textsc{Remark} \ref{genfilprop}\ref{genfilprop4}, Proposition \ref{1hopro}\ref{1hopro2} and Corollary \ref{hukerprop}\ref{hukerprop3} we obtain that $h(x)\cap h(y)=h(\{x,y\})=h(\mathscr{F}(\{x,y\}))=h(\mathscr{F}(x\odot y))=h(x\odot y)$.
\end{enumerate}
\end{proof}

Recalling that a collection $\Pi$ of prime filters of a residuated lattice $\mathfrak{A}$ is called \textit{full} if any proper filter of $\mathfrak{A}$ is contained in some member of $\Pi$. Obviously, $Max(\mathfrak{A})$ and $Spec(\mathfrak{A})$ are full sets of prime filters.
\begin{proposition}\label{3hopro}
Let $\mathfrak{A}$ be a residuated lattice and $\Pi$ be a collection of prime filters in $\mathfrak{A}$. The following assertions hold for any $X,Y\subseteq A$ and $x,y\in A$:
\begin{enumerate}
\item  [$(1)$ \namedlabel{3hopro1}{$(1)$}] $h(X)\cup h(Y)=h(\mathscr{F}(X)\cap\mathscr{F}(Y))$;
\item  [$(2)$ \namedlabel{3hopro2}{$(2)$}] $h(x)\cup h(y)=h(x\vee y)$;
\item  [$(3)$ \namedlabel{3hopro3}{$(3)$}] if $\mathscr{F}(X)=A$, then $h(X)=\emptyset$;
\item  [$(4)$ \namedlabel{3hopro4}{$(4)$}] if $\Pi$ is full, then $h(X)=\emptyset$ if and only if $\mathscr{F}(X)=A$;
\item  [$(5)$ \namedlabel{3hopro5}{$(5)$}] if $\Pi=Spec(\mathfrak{A})$, then $kh(X)=\mathscr{F}(X)$;
\item  [$(6)$ \namedlabel{3hopro6}{$(6)$}] if $\Pi=Spec(\mathfrak{A})$, then $h(X)=h(Y)$ if and only if $\mathscr{F}(X)=\mathscr{F}(Y)$;
\item  [$(7)$ \namedlabel{3hopro7}{$(7)$}] $h(X)\cup h(X^{\perp})=\Gamma$.
\end{enumerate}
\end{proposition}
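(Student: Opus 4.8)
The plan is to obtain each identity by upgrading an inclusion that already holds for an arbitrary collection of filters, the decisive extra input being the primeness of the members of $\Pi$. For \ref{3hopro1}, the inclusion $h(X)\cup h(Y)\subseteq h(\mathscr{F}(X)\cap\mathscr{F}(Y))$ is precisely Proposition \ref{1hopro}\ref{1hopro5}, so only the reverse inclusion requires an argument. Given $P\in\Pi$ with $\mathscr{F}(X)\cap\mathscr{F}(Y)\subseteq P$, primeness in the form of Proposition \ref{preqpro}\ref{preqpro2} forces $\mathscr{F}(X)\subseteq P$ or $\mathscr{F}(Y)\subseteq P$, that is $X\subseteq P$ or $Y\subseteq P$, whence $P\in h(X)\cup h(Y)$. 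Part \ref{3hopro2} is then immediate: using $h(x)=h(\mathscr{F}(x))$ from Proposition \ref{1hopro}\ref{1hopro2} and the identity $\mathscr{F}(x)\cap\mathscr{F}(y)=\mathscr{F}(x\vee y)$ from Remark \ref{genfilprop}\ref{genfilprop3}, specialising \ref{3hopro1} to $X=\{x\}$ and $Y=\{y\}$ gives $h(x)\cup h(y)=h(\mathscr{F}(x)\cap\mathscr{F}(y))=h(x\vee y)$.

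For \ref{3hopro3}, rewrite $h(X)=h(\mathscr{F}(X))$ by Proposition \ref{1hopro}\ref{1hopro2}; if $\mathscr{F}(X)=A$ and some $P\in h(X)$ existed, then $A\subseteq P$, contradicting that the prime filter $P$ is proper. Part \ref{3hopro4} supplies the converse under fullness: if $\mathscr{F}(X)\neq A$ then $\mathscr{F}(X)$ is a proper filter, hence contained in some $P\in\Pi$ by fullness, giving $X\subseteq P$ and $P\in h(X)$, so $h(X)\neq\emptyset$; combined with \ref{3hopro3} this yields the stated equivalence.

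Parts \ref{3hopro5} and \ref{3hopro6} rest on the prime filter representation. Taking $\Pi=Spec(\mathfrak{A})$, the definitions of $h$ and $k$ give $kh(X)=\bigcap\{P\in Spec(\mathfrak{A})\mid X\subseteq P\}$, and this intersection equals $\mathscr{F}(X)$ by Corollary \ref{intprimfilt}\ref{intprimfilt2}, proving \ref{3hopro5}. For \ref{3hopro6}, the implication $\mathscr{F}(X)=\mathscr{F}(Y)\Rightarrow h(X)=h(Y)$ is Proposition \ref{1hopro}\ref{1hopro2}, while the converse follows by applying $k$ and invoking \ref{3hopro5}, namely $\mathscr{F}(X)=kh(X)=kh(Y)=\mathscr{F}(Y)$.

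The only genuinely new computation, and the step I expect to be the main point, is \ref{3hopro7}. Since $h$ takes values in $\mathcal{P}(\Pi)$, only $\Pi\subseteq h(X)\cup h(X^{\perp})$ needs proof. Fix $P\in\Pi$; if $X\subseteq P$ we are done, so assume there is some $x\in X\setminus P$. For any $a\in X^{\perp}$ the definition $X^{\perp}=(\{1\}:X)$ gives $x\vee a=1\in P$, and primeness via Proposition \ref{preqpro}\ref{preqpro3} together with $x\notin P$ forces $a\in P$; hence $X^{\perp}\subseteq P$ and $P\in h(X^{\perp})$. The recurring theme throughout is thus that primeness converts the general inclusions of Propositions \ref{1hopro} and \ref{22hopro} into equalities, with \ref{3hopro1} acting as the engine for \ref{3hopro2} and \ref{3hopro6}. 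The one subtlety to flag is purely notational: the right-hand side of \ref{3hopro7} should read $\Pi$ rather than $\Gamma$, since the ambient collection in this proposition is the prime-filter collection $\Pi$.
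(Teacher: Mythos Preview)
Your proof is correct and, for parts \ref{3hopro1}--\ref{3hopro6}, essentially identical to the paper's argument: the same citations (Proposition~\ref{1hopro}\ref{1hopro5}, Proposition~\ref{preqpro}, Remark~\ref{genfilprop}\ref{genfilprop3}, Corollary~\ref{intprimfilt}\ref{intprimfilt2}) are invoked in the same order and for the same purpose.

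The only divergence is in \ref{3hopro7}. The paper does not argue elementwise as you do; instead it applies \ref{3hopro1} directly to obtain $h(X)\cup h(X^{\perp})=h(\mathscr{F}(X)\cap X^{\perp})$ and then observes that this equals $h(1)$, invoking Proposition~\ref{1hopro}\ref{1hopro3}. That route is slicker in that it recycles \ref{3hopro1}, but it silently uses the fact that $\mathscr{F}(X)\cap X^{\perp}\subseteq\bigcap\Pi$, whose verification is precisely the prime-filter computation you wrote out. So your direct argument is a bit more self-contained, while the paper's is more structural; neither gains anything substantial over the other. Your observation about the $\Gamma$/$\Pi$ typo is also correct.
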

\begin{proof}
\begin{enumerate}
  \item [\ref{3hopro1}:] Let $P\in h(\mathscr{F}(X)\cap\mathscr{F}(Y))$. Hence, $\mathscr{F}(X)\cap\mathscr{F}(Y)\subseteq P$ and in spirit of Proposition \ref{preqpro} follows that $X\subseteq \mathscr{F}(X)\subseteq P$ or $Y\subseteq \mathscr{F}(Y)\subseteq P$. It states that $P\in h(X)\cup h(Y)$. The converse inclusion follows by Proposition \ref{1hopro}\ref{1hopro5}.
  \item [\ref{3hopro2}:] By \textsc{Remark} \ref{genfilprop}\ref{genfilprop3}, Proposition \ref{1hopro}\ref{1hopro2} and \ref{3hopro1} follows that $h(x)\cup h(y)=h(\mathscr{F}(x)\cap\mathscr{F}(y))=h(\mathscr{F}(x\vee y))=h(x\vee y)$.
  \item [\ref{3hopro3}:] It is evident, since each prime filter is proper.
  \item [\ref{3hopro4}:] Let $\Pi$ be full and $h(X)=\emptyset$. If $\mathscr{F}(X)$ is proper, then there exists $P\in \Pi$ so that $X\subseteq \mathscr{F}(X)\subseteq P$. It means that $P\in h(X)$; a contradiction. The converse follows by \ref{3hopro3}.
  \item [\ref{3hopro5}:] It follows by Corollary \ref{intprimfilt}\ref{intprimfilt2}.
  \item [\ref{3hopro6}:] Let $\Pi=Spec(\mathfrak{A})$. If $h(X)=h(Y)$, applying \ref{3hopro5}, it results that $\mathscr{F}(X)=\mathscr{F}(Y)$. Conversely, if $\mathscr{F}(X)=\mathscr{F}(Y)$, applying Proposition \ref{1hopro}\ref{1hopro2}, it follows that $h(X)=h(Y)$.
  \item [\ref{3hopro7}:] By \ref{3hopro1} and Proposition \ref{1hopro}\ref{1hopro3} we have
  \[h(X)\cup h(X^{\perp})=h(\mathscr{F}(X)\cap X^{\perp})=h(1)=\Gamma.\]
\end{enumerate}
\end{proof}

A closure operator is called \textit{topological} if it preserves finite unions and empty-set. If $cl$ is a topological closure operator on $A$, then $\{X|cl(X^{c})=X^{c}\}$ is a topology on $A$ which is called the generated topology by the topological closure operator $cl$ \citep[Proposition 1.2.7]{eng}
\begin{theorem}\label{clotopo}
Let $\mathfrak{A}$ be a residuated lattice and $\Pi$ be a collection of prime filters in $\mathfrak{A}$. Then $hk$ is a topological closure operator on $\Pi$.
\end{theorem}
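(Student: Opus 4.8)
The plan is to invoke the machinery already assembled for the pair $(h,k)$ and to reduce the statement to two checks. By Corollary~\ref{hukerprop}\ref{hukerprop6} we already know that $hk$ is a closure operator on $\Pi$; recall that a closure operator is \emph{topological} precisely when it additionally fixes the empty set and preserves finite unions. So it remains only to verify that $hk(\emptyset)=\emptyset$ and that $hk(\mathcal{F}\cup\mathcal{G})=hk(\mathcal{F})\cup hk(\mathcal{G})$ for all $\mathcal{F},\mathcal{G}\subseteq\Pi$, the general finite case then following by an obvious induction with the empty union giving the empty-set clause.

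For the empty set I would compute directly: $k(\emptyset)=\cap\emptyset=A$, so $hk(\emptyset)=h(A)$. Since every member of $\Pi$ is a prime, hence proper, filter we have $A\notin\Pi$, and therefore $h(A)=\emptyset$ by Proposition~\ref{1hopro}\ref{1hopro4}. This disposes of the empty-set condition.

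For finite unions, one inclusion is free: $hk$ is monotone (being a closure operator), so from $\mathcal{F},\mathcal{G}\subseteq\mathcal{F}\cup\mathcal{G}$ we get $hk(\mathcal{F})\cup hk(\mathcal{G})\subseteq hk(\mathcal{F}\cup\mathcal{G})$. For the reverse inclusion I would first rewrite $hk(\mathcal{F}\cup\mathcal{G})$ using Corollary~\ref{hukerprop}\ref{hukerprop4}, which gives $k(\mathcal{F}\cup\mathcal{G})=k(\mathcal{F})\cap k(\mathcal{G})$, whence $hk(\mathcal{F}\cup\mathcal{G})=h\big(k(\mathcal{F})\cap k(\mathcal{G})\big)$. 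The crucial observation is that $k(\mathcal{F})=\cap\mathcal{F}$ and $k(\mathcal{G})=\cap\mathcal{G}$ are intersections of filters, hence themselves filters, so $\mathscr{F}(k(\mathcal{F}))=k(\mathcal{F})$ and $\mathscr{F}(k(\mathcal{G}))=k(\mathcal{G})$. Setting $X=k(\mathcal{F})$ and $Y=k(\mathcal{G})$ and applying Proposition~\ref{3hopro}\ref{3hopro1} then yields $h(X)\cup h(Y)=h(\mathscr{F}(X)\cap\mathscr{F}(Y))=h(X\cap Y)$, that is, $hk(\mathcal{F})\cup hk(\mathcal{G})=hk(\mathcal{F}\cup\mathcal{G})$, as required.

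The argument is short once the earlier results are in place, and the single point carrying genuine content is the reverse inclusion in the union clause. For an \emph{arbitrary} collection of filters only $h(X)\cup h(Y)\subseteq h(\mathscr{F}(X)\cap\mathscr{F}(Y))$ holds (Proposition~\ref{1hopro}\ref{1hopro5}), and the opposite inclusion can fail; it is exactly the primeness of the members of $\Pi$ — used through Proposition~\ref{3hopro}\ref{3hopro1}, which in turn rests on the characterization of prime filters in Proposition~\ref{preqpro} — that makes $h$ carry the meet $X\cap Y$ of filters to the union $h(X)\cup h(Y)$. I therefore expect this step to be the heart of the proof, and the reason the hypothesis that $\Pi$ consists of prime filters cannot be dropped.
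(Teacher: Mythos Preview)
Your proof is correct and follows essentially the same route as the paper: invoke Corollary~\ref{hukerprop}\ref{hukerprop6} for the closure-operator part, compute $hk(\emptyset)=h(A)=\emptyset$ using that prime filters are proper, and obtain the union clause from Corollary~\ref{hukerprop}\ref{hukerprop4} together with Proposition~\ref{3hopro}\ref{3hopro1}. Your version is simply more explicit---you spell out that $k(\mathcal{F})$ and $k(\mathcal{G})$ are already filters so that $\mathscr{F}(k(\mathcal{F}))=k(\mathcal{F})$, and you highlight where primeness enters---but the argument is the same.
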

\begin{proof}
By Corollary \ref{hukerprop}\ref{hukerprop6} follows that $hk$ is a closure operator on $\Pi$. Also, we have $hk(\emptyset)=h(A)=\emptyset$. By Corollary \ref{hukerprop}\ref{hukerprop4} and Proposition \ref{3hopro}\ref{3hopro1} for any $\mathcal{F},\mathcal{G}\subseteq \Pi$ we have $hk(\mathcal{F}\cup \mathcal{G})=h(k(\mathcal{F})\cup k(\mathcal{G}))=hk(\mathcal{F})\cup hk(\mathcal{G})$.
\end{proof}

Theorem \ref{clotopo} ensure that for any collection of prime filters $\Pi$ in a residuated lattice $\mathfrak{A}$ the topological closure operator $hk$ induced a topology on $\Pi$. This topology is called \textit{the hull-kernel topology} (equivalently, \textit{Zariski topology} or \textit{Jacobson's topology} or \textit{Stone's topology}) and denoted by $\tau_{h}$. Applying Corollary \ref{hukerprop}\ref{hukerprop6}, it follows that any closed set of $(\Pi;\tau_{h})$ is a hull of some subset of $A$. It is easy to see that the collection $\{h(x)|x\in A\}$ is a base for the closed sets.

Applying Proposition \ref{1hopro}\ref{1hopro3} and \ref{22hopro}\ref{22hopro3}, it follows that $\{h(x)|x\in A\}$ is a base for a topological space. The generated topology by this base is called the \textit{dual hull-kernel topology} and denoted by $\tau_{d}$.

If $\Pi$ is a collection of prime filters of a residuated lattice $\mathfrak{A}$, let us denote $\Pi\setminus h(X)$ by $d(X)$ for any $X\subseteq A$. Hence, $d(X)=\{P\in \Pi|X\nsubseteq P\}$. Therefore, the family  $\{d(X)\}_{X\subseteq A}$ is the family of open sets of the space $(\Pi;\tau_{h})$. For any $x\in A$ we denote $d(\{x\})$ by $d(x)$.
\begin{proposition}\label{oprispeprop}
 Let $\mathfrak{A}$ be a residuated lattice and $\Pi$ be a collection of prime filters in $\mathfrak{A}$. The following assertions hold for any $X,Y\subseteq A$:
 \begin{enumerate}
   \item [(1) \namedlabel{oprispeprop1}{(1)}] $X\subseteq Y$ implies $d(X)\subseteq d(Y)$;
   \item [(2) \namedlabel{oprispeprop2}{(2)}] $d(X)=\emptyset$ if and only if $X\subseteq \bigcap \Pi$. In particular, $d(\emptyset)=d(1)=\emptyset$;
   \item [(3) \namedlabel{oprispeprop3}{(3)}] if $\mathscr{F}(X)=A$, then $d(X)=\Pi$. In particular, $d(A)=d(0)=\Pi$;
   \item [(4) \namedlabel{oprispeprop4}{(4)}] if $\Pi$ is full, then $d(X)=\Pi$ if and only if $\mathscr{F}(X)=A$;
   \item [(5) \namedlabel{oprispeprop5}{(5)}] $\cup_{X\in \mathcal{X}}d(X)=d(\cup \mathcal{X})$ for any $\mathcal{X}\subseteq \mathcal{P}(A)$;
   \item [(6) \namedlabel{oprispeprop6}{(6)}] $d(X)=d(\mathscr{F}(X))$;
   \item [(7) \namedlabel{oprispeprop7}{(7)}] $kd(X)=(\bigcap \Pi:X)$.
   \item [(8) \namedlabel{oprispeprop8}{(8)}] if $\bigcap \Pi=\{1\}$, then $kd(X)=X^{\perp}$. In particular, if $\Pi=Spec(\mathfrak{A})$ or $\Pi=Min(\mathfrak{A})$, then $kd(X)=X^{\perp}$.
   \item [(9) \namedlabel{oprispeprop9}{(9)}] $d(X)\cap d(Y)=d(\mathscr{F}(X)\cap \mathscr{F}(Y))$;
   \item [(10) \namedlabel{oprispeprop10}{(10)}] if $\Pi=Spec(\mathfrak{A})$, then $d(X)=d(Y)$ if and only if $\mathscr{F}(X)=\mathscr{F}(Y)$.
 \end{enumerate}
 \end{proposition}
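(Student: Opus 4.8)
The entire proposition is the \emph{complement-dual} of facts already established for the hull operator, since by construction $d(X)=\Pi\setminus h(X)$. Accordingly, the plan is to translate each assertion about the open sets $d(X)$ into a statement about the closed sets $h(X)$, invoke the relevant earlier result (drawn from Proposition \ref{1hopro}, Corollary \ref{hukerprop}, and Proposition \ref{3hopro}), and then pass back through De Morgan's laws inside $\Pi$. Only assertion \ref{oprispeprop7} requires an input that is not itself a dualized hull statement, namely Proposition \ref{ccapspecx}.

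For the order and triviality block I would proceed as follows. Assertion \ref{oprispeprop1} is immediate from the antitonicity of $h$ (Corollary \ref{hukerprop}\ref{hukerprop2}): $X\subseteq Y$ forces $h(Y)\subseteq h(X)$, hence $\Pi\setminus h(X)\subseteq\Pi\setminus h(Y)$. For \ref{oprispeprop2}, observe that $d(X)=\emptyset$ iff $h(X)=\Pi$, which by Proposition \ref{1hopro}\ref{1hopro3} is equivalent to $\mathscr{F}(X)\subseteq k(\Pi)=\bigcap\Pi$; since $\bigcap\Pi$ is itself a filter, this reduces to $X\subseteq\bigcap\Pi$, and the special cases use $\emptyset\subseteq\bigcap\Pi$ and $1\in\bigcap\Pi$. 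Assertion \ref{oprispeprop3} complements Proposition \ref{3hopro}\ref{3hopro3}, which already yields $h(X)=\emptyset$; assertion \ref{oprispeprop4} complements Proposition \ref{3hopro}\ref{3hopro4}; in both cases the special cases follow from $\mathscr{F}(A)=\mathscr{F}(0)=A$. Finally, \ref{oprispeprop6} is the direct complement of Proposition \ref{1hopro}\ref{1hopro2}.

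The lattice-operation statements confirm that the family $\{d(X)\}_{X\subseteq A}$ satisfies the topology axioms. Assertion \ref{oprispeprop5} is obtained by complementing Corollary \ref{hukerprop}\ref{hukerprop3}: the identity $\cap_{X}h(X)=h(\cup\mathcal{X})$ becomes $\cup_{X}d(X)=d(\cup\mathcal{X})$, giving closure under arbitrary unions. Dually, \ref{oprispeprop9} is the complement of Proposition \ref{3hopro}\ref{3hopro1}, via $\Pi\setminus(h(X)\cup h(Y))=d(X)\cap d(Y)$, giving closure under finite intersections. Assertion \ref{oprispeprop10} is the complement of Proposition \ref{3hopro}\ref{3hopro6}, valid precisely when $\Pi=Spec(\mathfrak{A})$.

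The genuinely new ingredient lives in \ref{oprispeprop7}, where rather than dualize I would compute directly: $kd(X)=\bigcap d(X)=\bigcap\{P\in\Pi\mid X\nsubseteq P\}$, which is exactly $(\bigcap\Pi:X)$ by Proposition \ref{ccapspecx}. Assertion \ref{oprispeprop8} is then the specialization $\bigcap\Pi=\{1\}$, under which the coannihilator $(\{1\}:X)$ is by definition $X^{\perp}$; the two listed special cases hold because $\bigcap Spec(\mathfrak{A})=\{1\}$ and $\bigcap Min(\mathfrak{A})=\{1\}$, each obtained by taking $X=\emptyset$ in Corollary \ref{intprimfilt}\ref{intprimfilt2} and Corollary \ref{mininters}\ref{mininters2} respectively. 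I expect no serious obstacle anywhere: the only points demanding a little care are the forward direction of \ref{oprispeprop2}, where one must promote $X\subseteq\bigcap\Pi$ to $\mathscr{F}(X)\subseteq\bigcap\Pi$ using that $\bigcap\Pi$ is a filter, and the bookkeeping in \ref{oprispeprop7}--\ref{oprispeprop8} of tracking which ambient intersection $\bigcap\Pi$ collapses to $\{1\}$.
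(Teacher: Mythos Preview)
Your proposal is correct and follows essentially the same approach as the paper: each part is obtained by complementing the corresponding hull identity (citing the same sources, namely Proposition \ref{1hopro}, Corollary \ref{hukerprop}, and Proposition \ref{3hopro}), with \ref{oprispeprop7} handled directly via Proposition \ref{ccapspecx} and \ref{oprispeprop8} as its specialization. If anything, your write-up is more detailed than the paper's, which simply records ``by duality, it follows by \ldots'' for each item.
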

\begin{proof}
\item [\ref{oprispeprop1}:] By duality, it follows by Corollary \ref{hukerprop}\ref{hukerprop2}.
\item [\ref{oprispeprop2}:] By duality, it follows by Proposition \ref{1hopro}\ref{1hopro3}.
\item [\ref{oprispeprop3}:] By duality, it follows by Proposition \ref{1hopro}\ref{1hopro4} and \ref{3hopro}\ref{3hopro3}.
\item [\ref{oprispeprop4}:] By duality, it follows by Proposition \ref{3hopro}\ref{3hopro4}.
\item [\ref{oprispeprop5}:] By duality, it follows by Corollary \ref{hukerprop}\ref{hukerprop3}.
\item [\ref{oprispeprop6}:] By duality, it follows by Proposition \ref{1hopro}\ref{1hopro2}.
\item [\ref{oprispeprop7}:] It follows by Proposition \ref{ccapspecx}.
\item [\ref{oprispeprop8}:] It is evident by \ref{oprispeprop7}.
\item [\ref{oprispeprop9}:] By duality, it follows by Proposition \ref{3hopro}\ref{3hopro1}.
\item [\ref{oprispeprop10}:] By duality, it follows by Proposition \ref{3hopro}\ref{3hopro6}.
\end{proof}

\begin{proposition}\label{1adprispeprop}
Let $\mathfrak{A}$ be a residuated lattice and $\Pi$ be a collection of prime filters in $\mathfrak{A}$. The following assertions hold:
 \begin{enumerate}
   \item [(1) \namedlabel{1adprispeprop1}{(1)}] If $x\leq y$, then $d(y)\subseteq d(x)$;
   \item [(2) \namedlabel{1adprispeprop2}{(2)}] $d(x)\cap d(y)=d(x\vee y)$;
   \item [(3) \namedlabel{1adprispeprop3}{(3)}] $d(x)\cup d(y)=d(x\odot y)$;
   \item [(4) \namedlabel{1adprispeprop4}{(4)}] $h(x)\subseteq d(\neg x)$.
 \end{enumerate}
 \end{proposition}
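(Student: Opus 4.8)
The plan is to obtain the first three identities by duality from the hull-side results already established, complementing everything inside $\Pi$, and to treat the fourth by a direct residuation argument together with the properness of prime filters. The underlying dictionary is $d(x)=\Pi\setminus h(x)$, so that complementation $S\mapsto \Pi\setminus S$ turns each statement about $d$ into the corresponding statement about $h$, reversing inclusions and interchanging unions with intersections.

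For \ref{1adprispeprop1}, from $x\leq y$ Proposition \ref{22hopro}\ref{22hopro1} gives $h(x)\subseteq h(y)$, and complementing reverses the inclusion to $d(y)=\Pi\setminus h(y)\subseteq \Pi\setminus h(x)=d(x)$. For \ref{1adprispeprop2} I would apply De Morgan: since $\Pi$ consists of \emph{prime} filters, Proposition \ref{3hopro}\ref{3hopro2} yields the equality $h(x)\cup h(y)=h(x\vee y)$, whence $d(x)\cap d(y)=\Pi\setminus(h(x)\cup h(y))=\Pi\setminus h(x\vee y)=d(x\vee y)$. Dually, \ref{1adprispeprop3} follows from $h(x)\cap h(y)=h(x\odot y)$ (Proposition \ref{22hopro}\ref{22hopro3}) via $d(x)\cup d(y)=\Pi\setminus(h(x)\cap h(y))=d(x\odot y)$.

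The only part requiring a genuine computation is \ref{1adprispeprop4}. Here I would take $P\in h(x)$, i.e. $x\in P$, and show $\neg x\notin P$. The key identity is $x\odot\neg x=0$: by the adjunction $(\odot,\rightarrow)$, from $\neg x=x\rightarrow 0\leq x\rightarrow 0$ one gets $x\odot\neg x\leq 0$, hence $x\odot\neg x=0$. If $\neg x$ were in $P$, then closure of the filter $P$ under $\odot$ would force $x\odot\neg x=0\in P$, contradicting that $P$ is proper, since a prime filter never contains $0$. Therefore $\neg x\notin P$, that is $P\in d(\neg x)$, giving $h(x)\subseteq d(\neg x)$.

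I expect no real obstacle here: the first three parts are formal consequences of already established equalities through complementation, and the fourth rests only on the elementary residuation fact $x\odot\neg x=0$ and the properness of prime filters. The one point to keep in mind is that \ref{1adprispeprop2} genuinely uses the equality $h(x)\cup h(y)=h(x\vee y)$ rather than the mere inclusion of Proposition \ref{22hopro}\ref{22hopro2}, so it is essential that $\Pi$ be a collection of prime filters and not an arbitrary collection of filters.
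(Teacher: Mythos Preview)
Your proposal is correct and follows essentially the same route as the paper: parts \ref{1adprispeprop1}--\ref{1adprispeprop3} are obtained by duality from the corresponding hull identities, and \ref{1adprispeprop4} by the direct argument that $x,\neg x\in P$ would force $0=x\odot\neg x\in P$. In fact you are slightly more precise than the paper, which cites only Proposition~\ref{22hopro} for all three duality steps, whereas you correctly observe that \ref{1adprispeprop2} requires the \emph{equality} $h(x)\cup h(y)=h(x\vee y)$ of Proposition~\ref{3hopro}\ref{3hopro2}, valid because $\Pi$ consists of prime filters, rather than the mere inclusion of Proposition~\ref{22hopro}\ref{22hopro2}.
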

\begin{proof}
\ref{1adprispeprop1}, \ref{1adprispeprop2} and \ref{1adprispeprop3}, by duality, follows by Proposition \ref{22hopro}.
\item [\ref{1adprispeprop4}:] Let $P\in h(a)$. So $a\in P$. If $\neg a\in P$, then $0=a\odot \neg a\in P$ and it is a contradiction. Hence, $\neg a\notin P$ and so $P\in d(\neg a)$.
\end{proof}

Since any closed set of $(\Pi;\tau_{h})$ is a hull of some subset of $A$ so we obtain that $\tau_{h}=\{d(X)|X\subseteq A\}$.
\begin{proposition}
Let $\mathfrak{A}$ be a residuated lattice and $\Pi$ be a collection of prime filters of $\mathfrak{A}$. The family $\{d(x)\}_{x\in A}$ is a basis for the topology space $(\Pi;\tau_{h})$.
\end{proposition}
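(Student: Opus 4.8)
The plan is to use the standard criterion recalled in the preliminaries: a subfamily of a topology is a base exactly when every open set is a union of its members. Immediately before this proposition it is observed that $\tau_{h}=\{d(X)\mid X\subseteq A\}$, so the open sets of $(\Pi;\tau_{h})$ are precisely the sets $d(X)$. Since each $d(x)=d(\{x\})$ is of this form, the family $\{d(x)\}_{x\in A}$ is contained in $\tau_{h}$, and it remains only to express an arbitrary open set $d(X)$ as a union of sets $d(x)$.

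The key step is to write $X=\bigcup_{x\in X}\{x\}$ and apply Proposition \ref{oprispeprop}\ref{oprispeprop5} (which asserts $\bigcup_{X\in\mathcal{X}}d(X)=d(\bigcup\mathcal{X})$) with $\mathcal{X}=\{\{x\}\mid x\in X\}$. Since $\bigcup\mathcal{X}=X$, this yields $d(X)=\bigcup_{x\in X}d(x)$. Hence every open set is a union of members of $\{d(x)\}_{x\in A}$, which is exactly what is required for the family to be a base.

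For completeness I would also verify the two conditions in the base criterion stated in Section \ref{sec2}. The covering condition holds because $\bigcup_{x\in A}d(x)=d(A)=\Pi$, combining Proposition \ref{oprispeprop}\ref{oprispeprop5} with Proposition \ref{oprispeprop}\ref{oprispeprop3}. The intersection condition is even more direct: Proposition \ref{1adprispeprop}\ref{1adprispeprop2} gives $d(x)\cap d(y)=d(x\vee y)$, so the family is closed under finite intersection, and any $P\in d(x)\cap d(y)$ trivially satisfies $P\in d(x\vee y)\subseteq d(x)\cap d(y)$.

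There is no real obstacle here: the whole argument reduces to the additivity of the operator $d$ over arbitrary unions, already established in Proposition \ref{oprispeprop}\ref{oprispeprop5}. The only point requiring a moment's attention is the identification $\tau_{h}=\{d(X)\mid X\subseteq A\}$ of the open sets, which legitimizes decomposing an arbitrary open set rather than a generic $d(X)$.
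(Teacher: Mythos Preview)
Your proof is correct and follows exactly the paper's approach: it uses the identification $\tau_h=\{d(X)\mid X\subseteq A\}$ and then applies Proposition~\ref{oprispeprop}\ref{oprispeprop5} to write $d(X)=\bigcup_{x\in X}d(x)$. The additional verification of the covering and intersection conditions is extra detail the paper omits, but it is harmless.
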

\begin{proof}
Let $d(X)$ be an arbitrary open subset of $\Pi$ for some $X\subseteq A$. By Proposition \ref{oprispeprop}\ref{oprispeprop5} follows that $d(X)=d(\cup_{x\in X} x)=\cup_{x\in X} d(x)$.
\end{proof}
The family $\{d(x)\}_{x\in A}$ is called a \textit{base} of the topological space $(\Pi;\tau_{h})$. In the next theorem we observe that any collection of prime filters in a residuated lattice is a $T_0$ space with the hull-kernel and the dual hull-kernel topology.
\begin{theorem}\label{t0spacecon}
Let $\mathfrak{A}$ be a residuated lattice and $\Pi$ be a collection of prime filters in $\mathfrak{A}$. Then $(\Pi;\tau_{h})$ and $(\Pi;\tau_{d})$ are $T_0$ spaces.
\end{theorem}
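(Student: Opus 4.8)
The plan is to exploit the fact that distinct points of $\Pi$ are literally distinct filters, i.e. distinct subsets of $A$, together with the explicit description of the basic open sets of the two topologies. For the hull-kernel topology I would work with the base $\{d(x)\}_{x\in A}$ established just before the statement, and for the dual hull-kernel topology with the base $\{h(x)\}_{x\in A}$ from the definition of $\tau_{d}$. By \textsc{Remark} \ref{t1pro}(2) it suffices to separate points using basic open sets, which keeps the argument purely set-theoretic.

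First I would fix two distinct elements $P,Q\in\Pi$. Since $P\neq Q$ as subsets of $A$, their symmetric difference is non-empty, so there is some $x\in A$ lying in exactly one of them; without loss of generality assume $x\in P\setminus Q$. This single witnessing element will do all the work for both topologies, since membership of $x$ in a prime filter is exactly the condition distinguishing the basic open sets $d(x)$ and $h(x)$.

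For $(\Pi;\tau_{h})$, recall $d(x)=\{R\in\Pi\mid x\notin R\}$. From $x\in P$ and $x\notin Q$ I get $P\notin d(x)$ and $Q\in d(x)$, so $d(x)$ is a basic open set containing exactly one of the two points; this proves the $T_0$ axiom for $\tau_{h}$. Dually, for $(\Pi;\tau_{d})$ the basic open set is $h(x)=\{R\in\Pi\mid x\in R\}$, and now $x\in P$, $x\notin Q$ give $P\in h(x)$ and $Q\notin h(x)$, again separating the points in the $T_0$ sense. Hence both spaces are $T_0$.

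There is essentially no obstacle here: the only thing to notice is that two distinct prime filters, even when incomparable, must differ on some element of $A$, and that the hull and kernel operators are built precisely from elementwise membership. The mild asymmetry (one may have $x\in P\setminus Q$ or $x\in Q\setminus P$) is harmless, because the $T_0$ condition only asks for a single open set isolating one of the two points, so either case is handled symmetrically by the same basic open set.
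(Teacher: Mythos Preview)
Your proof is correct and follows essentially the same approach as the paper: pick two distinct prime filters, choose an element $x$ in one but not the other, and observe that the basic open set $d(x)$ (respectively $h(x)$) contains exactly one of them. The only difference is that you spell out the $\tau_d$ case explicitly, whereas the paper handles it by the single word ``analogously.''
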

\begin{proof}
Let $P$ and $Q$ be two distinct element of $\Pi$. So $P\nsubseteq Q$ or $Q\nsubseteq P$. Suppose that $P\nsubseteq Q$. Hence there exists $a\in P\setminus Q$ and so $Q\in d(a)$ and $P\notin d(a)$. It follows that $(\Pi;\tau_{h})$ is a $T_0$ space. Analogously, we can show that $(\Pi;\tau_{d})$ is a $T_0$ space.
\end{proof}

In the next theorem we give a necessary and sufficient condition for a collection of prime filters in a residuated lattice be a $T_1$ space with the hull-kernel and the dual hull-kernel topology.
\begin{theorem}\label{t1spa}
Let $\mathfrak{A}$ be a residuated lattice and $\Pi$ be a collection of prime filters in $\mathfrak{A}$. The following assertions hold:
 \begin{enumerate}
   \item [(1) \namedlabel{t1spa1}{(1)}] $(\Pi;\tau_{d})$ is a $T_1$ space;
   \item [(2) \namedlabel{t1spa2}{(2)}] $(\Pi;\tau_{h})$ is a $T_1$ space;
   \item [(3) \namedlabel{t1spa3}{(3)}] $\Pi$ is an antichain.
 \end{enumerate}
In particular, $Max(\mathfrak{A})$ and $Min(\mathfrak{A})$ are $T_1$ space with both the hull-kernel topology and the dual hull-kernel topology.
\end{theorem}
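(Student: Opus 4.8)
The plan is to show that each of the two topologies is $T_1$ precisely when $\Pi$ is an antichain, working directly with the bases already exhibited: $\{d(x)\}_{x\in A}$ for $\tau_{h}$ and $\{h(x)\}_{x\in A}$ for $\tau_{d}$. By Remark \ref{t1pro}(2) it is enough to separate points by \emph{basic} open sets, so the whole argument reduces to translating membership in a basic open set into an inclusion between prime filters.

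First I would set up the two separation dictionaries for distinct $P,Q\in\Pi$. Since $d(x)=\{R\in\Pi\mid x\notin R\}$, we have $P\in d(x)$ and $Q\notin d(x)$ iff $x\in Q\setminus P$; and since $h(x)=\{R\in\Pi\mid x\in R\}$, we have $P\in h(x)$ and $Q\notin h(x)$ iff $x\in P\setminus Q$. Consequently a basic $\tau_{h}$-open containing $P$ but not $Q$ exists iff $Q\nsubseteq P$, while a basic $\tau_{d}$-open containing $P$ but not $Q$ exists iff $P\nsubseteq Q$.

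For \ref{t1spa3}$\Rightarrow$\ref{t1spa1} and \ref{t1spa3}$\Rightarrow$\ref{t1spa2}: if $\Pi$ is an antichain, any distinct $P,Q$ are incomparable, so both $P\setminus Q$ and $Q\setminus P$ are non-empty; picking $x\in P\setminus Q$ and $y\in Q\setminus P$ gives in $\tau_{h}$ the basic opens $d(y)$ (containing $P$, not $Q$) and $d(x)$ (containing $Q$, not $P$), and in $\tau_{d}$ the basic opens $h(x)$ and $h(y)$ with the analogous property, so both spaces are $T_1$. For the converses \ref{t1spa1}$\Rightarrow$\ref{t1spa3} and \ref{t1spa2}$\Rightarrow$\ref{t1spa3} I would argue contrapositively: if $\Pi$ fails to be an antichain, choose distinct $P,Q\in\Pi$ with $P\subsetneq Q$. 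Then every basic $\tau_{d}$-open $h(x)$ containing $P$ has $x\in P\subseteq Q$, hence also contains $Q$, so $P$ cannot be isolated from $Q$ and $\tau_{d}$ is not $T_1$; dually every basic $\tau_{h}$-open $d(y)$ containing $Q$ has $y\notin Q$, whence $y\notin P$ and it also contains $P$, so $\tau_{h}$ is not $T_1$. This establishes the equivalence of \ref{t1spa1}, \ref{t1spa2} and \ref{t1spa3}.

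Finally, for the ``in particular'' clause it suffices to observe that $Max(\mathfrak{A})$ and $Min(\mathfrak{A})$ are antichains: two comparable distinct maximal filters would contradict maximality (the larger being proper), and two comparable distinct minimal prime filters would contradict minimality, so in each case condition \ref{t1spa3} holds and the spaces are $T_1$ for both $\tau_{h}$ and $\tau_{d}$. I do not expect a genuine obstacle here; the only point needing care is tracking \emph{both} separation directions at once, but since the antichain hypothesis is symmetric in $P$ and $Q$ this is routine bookkeeping rather than a real difficulty.
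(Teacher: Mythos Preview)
Your proof is correct and follows essentially the same approach as the paper: both translate membership in the basic opens $d(x)$ and $h(x)$ into inclusion statements between filters, and then read off the equivalence with the antichain condition. The only cosmetic difference is that the paper dispatches \ref{t1spa1}$\Leftrightarrow$\ref{t1spa2} in one line via $d(a)=\Pi\setminus h(a)$ and then proves \ref{t1spa2}$\Rightarrow$\ref{t1spa3} directly rather than by contraposition, whereas you treat the two topologies in parallel; neither version adds or omits any real content.
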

\begin{proof}
\item \ref{t1spa1}$\Leftrightarrow$\ref{t1spa2}: It is straightforward by this fact that $d(a)=\Pi\setminus h(a)$ for any $a\in A$.
\item \ref{t1spa2}$\Rightarrow$\ref{t1spa3}:  Let $(\Pi;\tau_{h})$ be a $T_1$ space and $P,Q$ be two distinct element of $\Pi$. So there exists $a\in A$ such that $Q\in d(a)$ and $P\notin d(a)$. It follows that $a\in P\setminus Q$ and so $P\nsubseteq Q$. Analogously, we can show that $Q\nsubseteq P$ and it results that $\Pi$ is an antichain.
\item \ref{t1spa3}$\Rightarrow$\ref{t1spa2}: Let $\Pi$ be an antichain and $P,Q\in \Pi$ such that $P\neq Q$. So $P\nsubseteq Q$ and $Q\nsubseteq P$. Hence, there exist $a\in P\setminus Q$ and $b\in Q\setminus P$. It shows that $Q\in d(a)$, $P\notin d(a)$, $P\in d(b)$ and $Q\notin d(b)$. Thus $(\Pi;\tau_{h})$ is a $T_1$ space.
\end{proof}

In the next theorem we give a necessary and sufficient condition for a collection of prime filters in a residuated lattice be a Hausdorff space with the hull-kernel and the dual hull-kernel topology.
\begin{theorem}\label{hauspro}
Let $\mathfrak{A}$ be a residuated lattice and $\Pi$ be a collection of prime filters in $\mathfrak{A}$. The following assertions hold:
 \begin{enumerate}
   \item [(1) \namedlabel{hauspro1}{(1)}] $(\Pi;\tau_{d})$ is a Hausdorff space;
   \item [(2) \namedlabel{hauspro2}{(2)}] $(\Pi;\tau_{h})$ is a Hausdorff space;
   \item [(3) \namedlabel{hauspro3}{(3)}] $\Pi$ is $\bigcap \Pi$-closed;
 \end{enumerate}
\end{theorem}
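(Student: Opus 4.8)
The plan is to reduce each of the three assertions to the existence of separating \emph{basic} open sets and then to translate between the two bases using the arithmetic of $h$ and $d$. By Remark~\ref{t1pro} it suffices throughout to separate points by basic open sets, since every open set is a union of basic ones; for $(\Pi;\tau_h)$ the basic open sets are the $d(x)$, and for $(\Pi;\tau_d)$ they are the $h(x)$. I would organize the argument as $\ref{hauspro2}\Leftrightarrow\ref{hauspro3}$ together with $\ref{hauspro1}\Leftrightarrow\ref{hauspro2}$.

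The equivalence $\ref{hauspro2}\Leftrightarrow\ref{hauspro3}$ is the clean core. Let $P\neq Q$ in $\Pi$. A pair of basic neighbourhoods $d(a)\ni P$, $d(b)\ni Q$ is, by the very definition of $d$, a choice of $a\notin P$ and $b\notin Q$; and by Proposition~\ref{1adprispeprop}\ref{1adprispeprop2} together with Proposition~\ref{oprispeprop}\ref{oprispeprop2} their disjointness $d(a)\cap d(b)=d(a\vee b)=\emptyset$ is precisely the statement $a\vee b\in\bigcap\Pi$. Thus producing disjoint $\tau_h$-neighbourhoods of every pair of distinct points is word-for-word the assertion that $\Pi$ is $\bigcap\Pi$-closed, and reading the equalities in either direction yields both implications.

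For $\ref{hauspro1}\Leftrightarrow\ref{hauspro2}$ I would run the dual computation. Separating $P\neq Q$ in $(\Pi;\tau_d)$ by disjoint basic sets $h(c)\ni P$, $h(d)\ni Q$ means choosing $c\in P$, $d\in Q$ with $h(c)\cap h(d)=h(c\odot d)=\emptyset$ by Proposition~\ref{22hopro}\ref{22hopro3}, i.e.\ with $c\odot d$ lying in no member of $\Pi$. Hence $\ref{hauspro1}$ asks, for each pair of distinct points, for $c\in P$ and $d\in Q$ with $c\odot d\notin\bigcup\Pi$. To connect this with the $\bigcap\Pi$-closed condition I would exploit primeness: if $a\notin P$, $b\notin Q$ and $a\vee b\in\bigcap\Pi$, then $a\vee b\in P$ forces $b\in P$ and $a\vee b\in Q$ forces $a\in Q$, so the same two elements already satisfy $b\in P\setminus Q$, $a\in Q\setminus P$ and become the natural candidates $c=b$, $d=a$ for a $\tau_d$-separation; the converse passage would extract a join witness from a product witness in the same manner. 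The relation $h(x)\subseteq d(\neg x)$ of Proposition~\ref{1adprispeprop}\ref{1adprispeprop4} and the identity $x\odot\neg x=0$ are the devices I would use to manufacture, from a given witness, a companion of the opposite (product versus join) type.

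The step I expect to be the main obstacle is exactly this reconciliation of the two separation conditions. The $\tau_h$/$\bigcap\Pi$-closed condition is of \emph{covering} type --- it forces a join into the intersection $\bigcap\Pi$, so that every member of $\Pi$ captures $a$ or $b$ --- whereas the $\tau_d$ condition is of \emph{packing} type, forcing a product outside the union $\bigcup\Pi$, so that no member of $\Pi$ captures both $c$ and $d$. The candidate pair $c=b$, $d=a$ produced above is automatically separated by $P$ and $Q$ themselves, but verifying $a\odot b\notin\bigcup\Pi$ requires ruling out a \emph{third} member of $\Pi$ that swallows both $a$ and $b$; this is where the full force of the $\bigcap\Pi$-closed hypothesis, rather than mere antichain-ness, must be brought to bear, and where I would expect to have to refine the witnesses (for instance by descending to suitable residuals or meets of $a$ and $b$) rather than to use them verbatim. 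Once this refinement is carried out, both implications of $\ref{hauspro1}\Leftrightarrow\ref{hauspro2}$ follow, and with $\ref{hauspro2}\Leftrightarrow\ref{hauspro3}$ already in hand this establishes the full equivalence.
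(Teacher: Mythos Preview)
Your argument for $\ref{hauspro2}\Leftrightarrow\ref{hauspro3}$ is exactly the paper's: disjointness of basic $\tau_h$-neighbourhoods $d(a_1),d(a_2)$ is, via Proposition~\ref{1adprispeprop}\ref{1adprispeprop2} and Proposition~\ref{oprispeprop}\ref{oprispeprop2}, literally the condition $a_1\vee a_2\in\bigcap\Pi$, and reading this either way gives both implications.

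Where you and the paper part company is $\ref{hauspro1}\Leftrightarrow\ref{hauspro2}$. The paper dispatches it with the single sentence ``It is straightforward by this fact that $d(a)=\Pi\setminus h(a)$ for any $a\in A$,'' recycled verbatim from the $T_1$ theorem (Theorem~\ref{t1spa}). For $T_1$ that trick is sound, since a single separating set $d(a)$ complements to a single separating set $h(a)$ with the roles of the two points swapped. For Hausdorff it is not: complementing a \emph{disjoint} pair $d(a),d(b)$ yields a \emph{covering} pair $h(a)\cup h(b)=\Pi$, not a disjoint one. Your covering-versus-packing analysis identifies precisely this asymmetry. The paper offers no further argument to bridge the gap between ``$a\vee b\in\bigcap\Pi$'' (every $R\in\Pi$ contains $a$ or $b$) and the $\tau_d$-Hausdorff requirement ``some $c\in P$, $d\in Q$ with $c\odot d\notin\bigcup\Pi$'' (no $R\in\Pi$ contains both $c$ and $d$).

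Your proposal is therefore more honest than the paper's at this point: you see that the natural candidate $(c,d)=(b,a)$ fails exactly when a third member of $\Pi$ contains both $a$ and $b$, and you correctly flag this as the place where real work is needed. But you do not carry that work out, and the devices you mention (Proposition~\ref{1adprispeprop}\ref{1adprispeprop4}, the identity $x\odot\neg x=0$) do not by themselves resolve it, since $\neg x$ need not lie outside the relevant filters. In short, your proposal and the paper's proof share the same genuine gap at $\ref{hauspro1}\Leftrightarrow\ref{hauspro2}$; you have located and diagnosed it, the paper has simply asserted past it.
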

\begin{proof}
\item \ref{hauspro1}$\Leftrightarrow$\ref{hauspro2}: It is straightforward by this fact that $d(a)=\Pi\setminus h(a)$ for any $a\in A$.
\item \ref{hauspro2}$\Rightarrow$\ref{hauspro3}: Assume that $P_1$ and $P_2$ are two distinct elements of $\Pi$. So there exist two basic open neighbourhoods $d(a_1)$ and $d(a_2)$ of $P_1$ and $P_2$, respectively, such that $d(a_1)\cap d(a_2)=\emptyset$. It states that $a_1\notin P_1$ and $a_2\notin P_2$ and by applying Proposition \ref{oprispeprop}\ref{oprispeprop2} and \ref{1adprispeprop}\ref{1adprispeprop2}, it follows that $a_1\vee a_2\in \bigcap \Pi$.
\item \ref{hauspro3}$\Rightarrow$\ref{hauspro2}: Let $P_1$ and $P_2$ be two distinct elements of $\Pi$. So there exist $a_1\notin P_1$ and $a_2\notin P_2$ such that $a_1\vee a_2\in \bigcap \Pi$. therefore, $d(a_1)$ and $d(a_2)$ are two basic open neighbourhoods of $P_1$ and $P_2$, respectively such that $d(a_1)\cap d(a_2)=\emptyset$.
\end{proof}
\begin{corollary}\label{hausantichmtl}
Let $\mathfrak{A}$ be a MTL algebra and $\Pi$ be a collection of prime filters of $\mathfrak{A}$. $(\Pi;\tau_{h})$ is a hausdorff space if and only if $\Pi$ is an antichain.
\end{corollary}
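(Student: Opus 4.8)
The plan is to combine the Hausdorff characterization in Theorem~\ref{hauspro} with the implications recorded in Proposition~\ref{chauspro}, using the pre-linearity condition to discharge the side hypothesis. By Theorem~\ref{hauspro}, the space $(\Pi;\tau_{h})$ is Hausdorff if and only if $\Pi$ is $\bigcap \Pi$-closed, so it suffices to show that, for an MTL algebra, being $\bigcap \Pi$-closed is equivalent to being an antichain.

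First I would set $F=\bigcap \Pi$. Being an intersection of filters, $F$ is itself a filter, hence $1\in F$. Since $\mathfrak{A}$ is an MTL algebra, the pre-linearity condition \ref{prel} gives $(x\rightarrow y)\vee(y\rightarrow x)=1\in F$ for all $x,y\in A$. This is precisely the side hypothesis required for the final implication of Proposition~\ref{chauspro}.

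Applying Proposition~\ref{chauspro} with this $F$, the implication \ref{chauspro1}$\Rightarrow$\ref{chauspro3} (which holds unconditionally) shows that $F$-closed forces $\Pi$ to be an antichain, while the implication \ref{chauspro3}$\Rightarrow$\ref{chauspro1}, available because the side hypothesis holds, yields the converse. Hence $\Pi$ is $\bigcap \Pi$-closed if and only if $\Pi$ is an antichain, and combining this with Theorem~\ref{hauspro} completes the argument.

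The argument is essentially a bookkeeping exercise, and no new construction is needed. The only point that deserves care---and the closest thing to an obstacle---is recognising that the pre-linearity identity places $(x\rightarrow y)\vee(y\rightarrow x)$ in \emph{every} filter, and in particular in $F=\bigcap \Pi$, so that the side condition of Proposition~\ref{chauspro} is automatically satisfied no matter how small $\bigcap \Pi$ happens to be. I would also note in passing that only the direction ``antichain $\Rightarrow$ Hausdorff'' actually invokes the MTL hypothesis; the reverse direction holds in any residuated lattice.
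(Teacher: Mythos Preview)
Your proof is correct and follows essentially the same route as the paper, which simply cites Corollary~\ref{mtlcloant} together with Theorem~\ref{hauspro}. The only cosmetic difference is that the paper invokes Proposition~\ref{chauspro} through the packaged special case $F=\{1\}$ (Corollary~\ref{mtlcloant}), whereas you apply Proposition~\ref{chauspro} directly with $F=\bigcap\Pi$; your choice is arguably cleaner since it matches the filter appearing in Theorem~\ref{hauspro} without an extra step.
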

\begin{proof}
It is an immediate consequence of Corollary \ref{mtlcloant} and Theorem \ref{hauspro}.
\end{proof}

\begin{lemma}\label{retractlemma}
Let $\mathfrak{A}$ be a residuated lattice, $\Pi$ be a collection of prime filters of $\mathfrak{A}$ and $P,Q\in \Pi$. The following assertions are equivalent:
\begin{enumerate}
   \item [(1) \namedlabel{retractlemma1}{(1)}] $P\subseteq Q$;
   \item [(2) \namedlabel{retractlemma2}{(2)}] $Q\in cl_{\tau_{h}}(\{P\})$;
   \item [(3) \namedlabel{retractlemma3}{(3)}] $P\in cl_{\tau_{d}}(\{Q\})$.
 \end{enumerate}
\end{lemma}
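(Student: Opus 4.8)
The plan is to prove the two equivalences $\ref{retractlemma1}\Leftrightarrow\ref{retractlemma2}$ and $\ref{retractlemma1}\Leftrightarrow\ref{retractlemma3}$ separately, in each case by unwinding the definition of closure through the appropriate base of open sets. The essential tool is the standard fact recalled in the preliminaries that $a\in\overline{X}$ if and only if every open set containing $a$ meets $X$; since neighbourhoods may be tested on a base (cf. Remark \ref{t1pro}(2)), it suffices to check this on basic open sets. Throughout I would use the pointwise descriptions $h(x)=\{R\in\Pi\mid x\in R\}$ and $d(x)=\{R\in\Pi\mid x\notin R\}$.

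For $\ref{retractlemma1}\Leftrightarrow\ref{retractlemma2}$ I would work in $(\Pi;\tau_{h})$, whose open sets have the family $\{d(x)\}_{x\in A}$ as a base. By the closure criterion, $Q\in cl_{\tau_{h}}(\{P\})$ holds if and only if every basic open set $d(x)$ containing $Q$ also contains $P$; that is, for every $x\in A$, $x\notin Q$ forces $x\notin P$. Contraposing, this reads $x\in P\Rightarrow x\in Q$ for all $x\in A$, which is precisely $P\subseteq Q$.

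For $\ref{retractlemma1}\Leftrightarrow\ref{retractlemma3}$ I would switch to $(\Pi;\tau_{d})$, for which $\{h(x)\}_{x\in A}$ is a base of open sets by construction. Applying the same criterion, $P\in cl_{\tau_{d}}(\{Q\})$ holds if and only if every basic open set $h(x)$ containing $P$ also contains $Q$; that is, for every $x\in A$, $x\in P$ forces $x\in Q$, which again is exactly $P\subseteq Q$. Chaining the two equivalences through $\ref{retractlemma1}$ then yields the full cycle.

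Each equivalence reduces to a single quantifier manipulation, so no serious obstacle arises; the only point demanding care—and the likeliest source of a slip—is keeping track of the roles of the two bases. The topology $\tau_{h}$ is generated (as open sets) by the $d(x)$ while $\tau_{d}$ is generated by the $h(x)$, and statements $\ref{retractlemma2}$ and $\ref{retractlemma3}$ interchange the positions of $P$ and $Q$. Getting these correspondences right is exactly what makes both closure conditions collapse to the single inclusion $P\subseteq Q$.
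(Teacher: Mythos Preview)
Your proof is correct and follows essentially the same approach as the paper: both arguments reduce the closure conditions to the basic-neighbourhood criterion, using $\{d(x)\}_{x\in A}$ as a base for $\tau_h$ and $\{h(x)\}_{x\in A}$ as a base for $\tau_d$. The only difference is organizational---the paper runs the cycle $\ref{retractlemma1}\Rightarrow\ref{retractlemma2}\Rightarrow\ref{retractlemma3}\Rightarrow\ref{retractlemma1}$ while you prove the two bi-implications through \ref{retractlemma1}---but the content is identical.
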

\begin{proof}
\item \ref{retractlemma1}$\Rightarrow$\ref{retractlemma2}: If $d(a)$ is a basic neighborhood of $Q$, then $a\notin Q$ and it implies that $a\notin P$. It shows that $P\in d(a)$ and hence $Q\in cl_{\tau_{h}}(\{P\})$.
\item \ref{retractlemma2}$\Rightarrow$\ref{retractlemma3}: If $h(a)$ is a basic neighborhood of $P$, then $a\in P$. Let $a\notin Q$. Thus $Q\in d(a)$ and it implies that $P\in d(a)$; a contradiction.
\item \ref{retractlemma2}$\Rightarrow$\ref{retractlemma3}: Let $a\in P$. So $P\in h(a)$ and it implies that $Q\in h(a)$.
\end{proof}

Recalling that a \textit{retraction} is a continuous mapping from a topological space into a subspace which preserves the position of all points in that subspace.
\begin{proposition}\label{retrano}
Let $\mathfrak{A}$ be a residuated lattice and $\Pi$ be an antichain of prime filters of $\mathfrak{A}$. The following assertions hold:
\begin{enumerate}
   \item [(1) \namedlabel{retrano1}{(1)}] If $(\Pi;\tau_{h})$ is retract of $(S_{\Pi};\tau_{h})$, then $(\Pi;\tau_{h})$ is a Hausdorff space;
   \item [(2) \namedlabel{retrano2}{(2)}] if $(\Pi;\tau_{d})$ is retract of $(S_{\Pi};\tau_{d})$, then each element of $S_{\Pi}$ is contained in a unique element of $\Pi$.
 \end{enumerate}
\end{proposition}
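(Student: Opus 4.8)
The plan is to handle the two parts by parallel arguments built on Lemma \ref{retractlemma} and the continuity of the retraction, with part \ref{retrano1} requiring one extra step that exploits the fact that in $\tau_h$ the closed sets are precisely the hulls. Throughout I write $F=\bigcap\Pi$ (a filter contained in every member of $\Pi$) and let $r$ denote the retraction, so that $r$ is continuous and $r(P)=P$ for all $P\in\Pi$. Since $\Pi$ is an antichain, Theorem \ref{t1spa} tells us that both $(\Pi;\tau_h)$ and $(\Pi;\tau_d)$ are $T_1$, so every singleton $\{P\}$ with $P\in\Pi$ is closed in $\Pi$, i.e. $cl(\{P\})=\{P\}$ in either topology.

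The first thing I would establish, simultaneously for both topologies, is that $r(Q)$ is forced to equal any member of $\Pi$ lying above $Q$. Given $Q\in S_{\Pi}$ with $Q\subseteq P$ for some $P\in\Pi$, Lemma \ref{retractlemma} yields $P\in cl_{\tau_h}(\{Q\})$ in the setting of part \ref{retrano1} and $Q\in cl_{\tau_d}(\{P\})$ in the setting of part \ref{retrano2} (both amounting to $Q\subseteq P$). Pushing this through the continuity of $r$ in the form $r(\overline{X})\subseteq\overline{r(X)}$ (Proposition \ref{comcont}\ref{comcont2}), and using that the relevant singleton of $\Pi$ is closed, the image of the closure collapses to a single point and one reads off $r(Q)=P$. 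Since every $Q\in S_{\Pi}$ lies under some element of $\Pi$ by the definition of $S_{\Pi}$, this shows at once that the element of $\Pi$ above $Q$ is unique, which is exactly the conclusion of part \ref{retrano2}; so part \ref{retrano2} is complete at this stage.

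For part \ref{retrano1} I would then go further. By the previous paragraph $r^{-1}(\{P\})=\{Q\in S_{\Pi}\mid Q\subseteq P\}$ for each $P\in\Pi$. As $\{P\}$ is $\tau_h$-closed and $r$ is $\tau_h$-continuous, Proposition \ref{comcont}\ref{comcont1} makes $r^{-1}(\{P\})$ closed in $(S_{\Pi};\tau_h)$; hence, the closed sets being hulls (Corollary \ref{hukerprop}\ref{hukerprop6}), it equals $h_{S_{\Pi}}(k(r^{-1}(\{P\})))$. Its kernel is $k(r^{-1}(\{P\}))=\bigcap\{Q\in S_{\Pi}\mid Q\subseteq P\}=\bigcap\{Q\in Spec(\mathfrak{A})\mid F\subseteq Q\subseteq P\}$, which by Corollary \ref{dfppr} is $D_{F}(P)$. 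Thus $r^{-1}(\{P\})=h_{S_{\Pi}}(D_{F}(P))$, so for $P'\in\Pi$ we get $D_{F}(P)\subseteq P'$ iff $P'\in r^{-1}(\{P\})$ iff $P'=r(P')=P$. Hence $P$ is the unique element of $\Pi$ containing $D_{F}(P)$, which is assertion \ref{pmprop2} of Proposition \ref{pmprop} for the filter $F=\bigcap\Pi$; that proposition then gives that $\Pi$ is $\bigcap\Pi$-closed, and Theorem \ref{hauspro} finally delivers that $(\Pi;\tau_h)$ is Hausdorff.

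The main obstacle, and the reason for the asymmetry between the two parts, is precisely the step that turns $r^{-1}(\{P\})$ into a hull. In $\tau_d$ the downward set $\{Q\mid Q\subseteq P\}$ is automatically closed, so the retraction extracts nothing beyond uniqueness; whereas in $\tau_h$ the closed sets are the upward-saturated hulls, and forcing the downward set $\{Q\mid Q\subseteq P\}$ to be such a hull is exactly the $pm$-type condition captured by Proposition \ref{pmprop}. I expect the only routine care needed is to check that Corollary \ref{dfppr} applies, which requires merely that $F=\bigcap\Pi$ be a filter contained in $P$; both are immediate, the inclusion $D_{F}(P)\subseteq P$ being Proposition \ref{lemdivi}\ref{lemdivi3}.
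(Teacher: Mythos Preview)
Your proof is correct and follows the same overall strategy as the paper: for part \ref{retrano2} the two arguments are essentially identical (Lemma \ref{retractlemma} plus $r(\overline{X})\subseteq\overline{r(X)}$), and for part \ref{retrano1} both proofs aim at showing that $P$ is the unique member of $\Pi$ containing $D_{\bigcap\Pi}(P)$ and then invoke Proposition \ref{pmprop} and Theorem \ref{hauspro}.

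The one noteworthy organisational difference in part \ref{retrano1} is this: the paper does \emph{not} first prove $r(Q)=P$ whenever $Q\subseteq P$; instead it takes $F_P=r^{-1}(\{P\})$, argues directly via neighbourhoods that any $P'\in\Pi$ with $\bigcap F_P\subseteq P'$ lies in $\overline{F_P}=F_P$ and hence equals $P$, and only then cites Corollary \ref{dfppr}. Your route---first pinning down $r^{-1}(\{P\})=\{Q\in S_\Pi\mid Q\subseteq P\}$ via Lemma \ref{retractlemma}, then computing its kernel---makes the appeal to Corollary \ref{dfppr} completely transparent, whereas in the paper's version the identification of $\bigcap F_P$ with $D_{\bigcap\Pi}(P)$ is left somewhat implicit (it tacitly requires $\{Q\mid F\subseteq Q\subseteq P\}\subseteq F_P$, which is exactly the step you supply). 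So your argument is a modest but genuine tidying of the paper's, not a different method.
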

\begin{proof}
\item [\ref{retrano1}:] Let $f:(S_{\Pi};\tau_{h})\longrightarrow (\Pi;\tau_{h})$ be a retraction and $P\in \Pi$. Let us we denote the closed set $f^{-1}(P)$ by $F_{P}$. We claim that $P$ is the unique element of $\Pi$ containing $\bigcap F_{P}$. Suppose that $\bigcap F_{P}\subseteq P'$ for some $P'\in \Pi$. If $d(a)\in \mathscr{N}_{P'}$, then $a\notin P'$ and it implies that $a\notin \bigcap F_{P}$. Hence, there exits $Q\in F_{P}$ such that $a\notin Q$. So $Q\in d(a)\cap F_{P}$ and it shows that $P'\in \overline{F_{P}}=F_{P}$. Therefore, $P'\subseteq P$ and it implies that $P'=P$. By Corollary \ref{dfppr} follows that $D_{\bigcap \Pi}(P)=F_{P}$ and it means that $P$ is the unique element of $\Pi$ containing $D_{\bigcap \Pi}(P)$. Hence, $\Pi$ is a Hausdorff space due to Theorem \ref{hauspro}.

\item [\ref{retrano2}:] Let $f:(S_{\Pi};\tau_{d})\longrightarrow (\Pi;\tau_{d})$ be a retraction and $Q\in S_{\Pi}$. Assume that $Q\subseteq P$ for some $P\in \Pi$. By Lemma \ref{retractlemma}, we have $Q\in \overline{\{P\}}$ and by Proposition \ref{comcont}\ref{comcont6} we obtain that
    \[f(Q)\in f(\overline{\{P\}})\subseteq \overline{f(\{P\})}=\overline{\{P\}}.\]
    By Lemma \ref{retractlemma} we have $f(Q)\subseteq P$. Since $\Pi$ is an antichain so $f(Q)=P$. It holds the result.
\end{proof}
\begin{remark}\label{remarkretr}
By the above proposition if $\Pi$ is an antichain of prime filters of a residuated lattice $\mathfrak{A}$, then there is a unique retraction $f:(S_{\Pi};\tau_{h})\longrightarrow (\Pi;\tau_{h})$, defined by $f(Q)=P$ where $P$ is the unique element of $\Pi$ containing $Q$ for any $Q\in S_{\Pi}$.
\end{remark}

\begin{theorem}\label{hausnorm}
Let $\mathfrak{A}$ be a residuated lattice. The following assertions are equivalent:
\begin{enumerate}
   \item [(1) \namedlabel{hausnorm1}{(1)}] Each prime filter of $\mathfrak{A}$ is contained in a unique maximal filter;
   \item [(2) \namedlabel{hausnorm2}{(2)}] $(Max(\mathfrak{A});\tau_{h})$ is retract of $(Spec(\mathfrak{A});\tau_{h})$;
   \item [(3) \namedlabel{hausnorm3}{(3)}] $(Max(\mathfrak{A});\tau_{h})$ is a Hausdorff space.
 \end{enumerate}
\end{theorem}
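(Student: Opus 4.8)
The plan is to run the cycle (1)$\Rightarrow$(2)$\Rightarrow$(3)$\Rightarrow$(1), exploiting throughout that $Max(\mathfrak{A})$ is an antichain, that $Max(\mathfrak{A})\subseteq S_{Max(\mathfrak{A})}\subseteq Spec(\mathfrak{A})$, and that the subspace topology induced by $\tau_{h}$ on $Max(\mathfrak{A})$ coincides with its intrinsic hull--kernel topology (since $d(x)\cap Max(\mathfrak{A})$ is exactly the basic open $d(x)$ of $Max(\mathfrak{A})$).

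For (1)$\Rightarrow$(2) I would argue as follows. Under (1) every prime filter $Q$ lies in a unique maximal filter, which I denote $M_{Q}$; then $f\colon Spec(\mathfrak{A})\to Max(\mathfrak{A})$ given by $f(Q)=M_{Q}$ fixes $Max(\mathfrak{A})$ pointwise and is therefore a set-theoretic retraction, so the entire difficulty is its continuity. By Proposition \ref{comcont}\ref{comcont4} it suffices to show that $f^{-1}(d(x))=\{Q\in Spec(\mathfrak{A})\mid x\notin M_{Q}\}$ is open for each $x\in A$, equivalently that $C_{x}:=\{Q\mid x\in M_{Q}\}$ is closed. First I would record the algebraic translation $x\notin M_{Q}\Leftrightarrow\mathscr{F}(Q,x)=A\Leftrightarrow\neg(x^{n})\in Q$ for some $n$ (via Remark \ref{genfilprop}\ref{genfilprop1} and uniqueness of $M_{Q}$), together with the observation that $Q\subseteq Q'$ forces $M_{Q}=M_{Q'}$, so that $C_{x}=\bigcup\{\,\{Q\mid Q\subseteq M\}\mid M\in Max(\mathfrak{A}),\ x\in M\,\}$ is an up-set in the inclusion order. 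The main obstacle is to promote this order-theoretic fact to genuine topological closedness; I expect to do so using the compactness of $(Spec(\mathfrak{A});\tau_{h})$ --- every family of hulls with the finite intersection property has nonempty intersection, by Remark \ref{genfilprop}\ref{genfilprop6} --- together with condition (1), to show that $\bigcap C_{x}\subseteq Q$ already forces $x\in M_{Q}$, i.e. $C_{x}=h(\bigcap C_{x})$. This closedness/continuity step is the crux of the whole theorem.

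For (2)$\Rightarrow$(3): if $r\colon(Spec(\mathfrak{A});\tau_{h})\to(Max(\mathfrak{A});\tau_{h})$ is a retraction, then its restriction to the subspace $S_{Max(\mathfrak{A})}$ is still continuous and still fixes $Max(\mathfrak{A})$ pointwise, hence is a retraction of $(S_{Max(\mathfrak{A})};\tau_{h})$ onto $(Max(\mathfrak{A});\tau_{h})$. Since $Max(\mathfrak{A})$ is an antichain, Proposition \ref{retrano}\ref{retrano1} applies verbatim and delivers that $(Max(\mathfrak{A});\tau_{h})$ is Hausdorff. For (3)$\Rightarrow$(1): by Theorem \ref{hauspro} (with $\Pi=Max(\mathfrak{A})$) condition (3) is equivalent to $Max(\mathfrak{A})$ being $\bigcap Max(\mathfrak{A})$-closed, whereupon Proposition \ref{chauspro} (\ref{chauspro1}$\Rightarrow$\ref{chauspro2}) gives that every member of $S_{Max(\mathfrak{A})}$ sits in a unique maximal filter. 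It then remains to lift this uniqueness from $S_{Max(\mathfrak{A})}$ to all of $Spec(\mathfrak{A})$: given a prime $Q\subseteq M_{1}\cap M_{2}$ with $M_{1}\neq M_{2}$ maximal, I would radicalise to $Q\veebar\bigcap Max(\mathfrak{A})$, which lies in $M_{1}\cap M_{2}$ and contains $\bigcap Max(\mathfrak{A})$, and use Corollary \ref{dfppr} and Lemma \ref{minmdfp} (the $D_{\bigcap Max(\mathfrak{A})}$-machinery) to manufacture a member of $S_{Max(\mathfrak{A})}$ lying inside both $M_{1}$ and $M_{2}$, contradicting the uniqueness just obtained. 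I anticipate the delicate point to be keeping the extended prime inside $M_{1}\cap M_{2}$, since $(M_{1}\cap M_{2})^{c}$ need not be $\vee$-closed and the Prime filter theorem (Theorem \ref{prfilth}) does not apply to it directly.

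As a check that the arc closes correctly, note that (2)$\Rightarrow$(1) also holds cleanly and directly: for a retraction $r$, continuity together with Lemma \ref{retractlemma} gives $r(\overline{\{Q\}})\subseteq\overline{\{r(Q)\}}=\{r(Q)\}$, and since every maximal $M\supseteq Q$ satisfies $M\in\overline{\{Q\}}$ and $r(M)=M$, all such $M$ must equal $r(Q)$, forcing uniqueness. This independent route confirms the orientation of the equivalences and isolates the continuity of $f$ in (1)$\Rightarrow$(2) as the single genuinely hard ingredient, with the $S_{Max(\mathfrak{A})}$-to-$Spec(\mathfrak{A})$ lifting in (3)$\Rightarrow$(1) as a secondary technical point.
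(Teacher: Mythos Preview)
Your cycle and your handling of (2)$\Rightarrow$(3) match the paper, but the heart of (1)$\Rightarrow$(2)---the continuity of $f$---is only announced, not proved. Your ``algebraic translation'' $x\notin M_Q\Leftrightarrow \neg(x^n)\in Q$ for some $n$ yields $f^{-1}(d(x))=\bigcup_n h_{Spec}(\neg(x^n))$, an $F_\sigma$ set, which does \emph{not} show openness; and your fallback ``show $C_x=h(\bigcap C_x)$ via compactness'' is exactly the statement to be proved, with no mechanism supplied. The paper's argument is a concrete Prime-filter-theorem construction you are missing: for $\mathcal F=f^{-1}(h_{Max}(a))$, set $F=k(\mathcal F)$, $C=\bigcup h_{Max}(a)$, take $P\in h(F)$, form the $\vee$-closed set $\mathscr C=\{x\vee y:x\notin C,\ y\notin P\}$, verify $F\cap\mathscr C=\emptyset$ (if $x\vee y\in F$ with $y\notin P$ then $y\notin Q$ for some $Q\in\mathcal F$, whence $x\in Q\subseteq M_Q\subseteq C$), and apply Theorem~\ref{prfilth} to get a prime $Q$ with $F\subseteq Q\subseteq C\cap P$. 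Then $\mathscr F(Q,a)$ is proper (else $0=q\odot a^n$ lands in some $M\in h_{Max}(a)$), so $Q$ lies in some $M\in h_{Max}(a)$; by (1), $M=M_Q=M_P$, hence $P\in\mathcal F$. This $\vee$-closed-set step is the missing idea.

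On (3)$\Rightarrow$(1) you have put your finger on a real subtlety: Theorem~\ref{hauspro} gives only that $Max(\mathfrak A)$ is $\bigcap Max(\mathfrak A)$-closed, and Proposition~\ref{chauspro} (with $F=\bigcap Max(\mathfrak A)$) then yields uniqueness only for primes in $S_{Max(\mathfrak A)}=\{Q\in Spec(\mathfrak A):\bigcap Max(\mathfrak A)\subseteq Q\}$. The paper's one-line ``Applying Proposition~\ref{chauspro} and~\ref{hauspro}'' does not bridge this, and your proposed fix does not either: $Q\veebar\bigcap Max(\mathfrak A)$ is a filter but not a prime, and since $(M_1\cap M_2)^c$ is not $\vee$-closed you cannot invoke Theorem~\ref{prfilth} to produce a prime in $S_{Max(\mathfrak A)}$ inside $M_1\cap M_2$; Corollary~\ref{dfppr} and Lemma~\ref{minmdfp} do not manufacture such a prime either. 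So your diagnosis is sharper than the paper's, but the lift from $S_{Max(\mathfrak A)}$ to $Spec(\mathfrak A)$ is still an open step in your write-up (and, as written, in the paper's).
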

\begin{proof}
\item \ref{hausnorm1}$\Rightarrow$\ref{hausnorm2}: For any $P\in Spec(\mathfrak{A})$ suppose that $M_{P}$ is the unique maximal filter containing $P$. Define $f:Spec(\mathfrak{A})\longrightarrow Max(\mathfrak{A})$ by $f(P)=M_{P}$. Consider a basic closed set $\mathcal{H}=h_{Max(\mathfrak{A})}(a)$ for some $a\in A$. We claim that $\mathcal{F}=f^{-1}(\mathcal{H})$ is a closed set in $Spec(\mathfrak{A})$. Let $F=k(\mathcal{F})$ and $C=\bigcup \mathcal{H}$. Let $P\in \overline{\mathcal{F}}=hk(\mathcal{F})=h(F)$ and so $F\subseteq P$. We have $F\subseteq C\cap P$ and it means that $F\cap (C\cap P)^{c}=\emptyset$. Since $C^{c}$ and $P^{c}$ are $\vee$-closed subsets of $\mathfrak{A}$ so $\mathscr{C}(C^{c}\cup P^c)=\{x\vee y|x\notin C,~y\notin P\}$. Let us we denote $\mathscr{C}(C^{c}\cup P^c)$ by $\mathscr{C}$. Therefore, we have $(C\cap P)^{c}=C^{c}\cup P^c\subseteq \mathscr{C}$. Let $x\vee y\in F$ for some $x\in C^c$ and $y\in P^c$. Since $y\notin P$ so $y\notin F$. It follows that there exists $Q\in \mathcal{F}$ such that $y\notin Q$. On the other hand, $x\vee y\in F\subseteq Q$, which it implies that $x\in Q\subseteq C$; a contradiction. So $F\cap \mathscr{C}=\emptyset$. Applying Theorem \ref{prfilth}, it follows that there exists a prime filter $Q$ such that $Q\cap \mathscr{C}=\emptyset$ and $F\subseteq Q$. It results that $Q\subseteq \mathscr{C}^{c}\subseteq C\cap P$. Let $\mathscr{F}(Q,a)=A$. So $q\odot a^{n}=0$ for some $q\in Q$ and integer $n$. Since $Q\subseteq C$ so there exists some $M\in h(a)$ such that $q\in M$, but it implies that $0\in M$; a contradiction. Thus, $\mathscr{F}(Q,a)\in M$ for some $M\in \mathcal{H}$. Hence, $P\subseteq Q\subseteq \mathscr{F}(Q,a)\subseteq M$, it follows that $P\in \mathcal{F}$. It states that $\mathcal{F}$ is a closed set in $(Spec(\mathfrak{A});\tau_{h})$ and so $f$ is a continuous function. Also, it is obvious that $f(M)=M$ for any $M\in Max(\mathfrak{A})$. It shows that $f$ is retract.
\item \ref{hausnorm2}$\Rightarrow$\ref{hausnorm3}: It is evident by Proposition \ref{retrano}\ref{retrano1}.
\item \ref{hausnorm3}$\Rightarrow$\ref{hausnorm1}: Applying Proposition \ref{chauspro} and \ref{hauspro}, it follows that each prime filter is contained in a unique maximal filter.

\end{proof}
%
\begin{lemma}\label{hasnorm}
Let $\mathfrak{A}$ be a residuated lattice, $\Pi$ be a collection of prime filters in $\mathfrak{A}$ and $f:(S_{\Pi};\tau_{h})\longrightarrow (\Pi;\tau_{h})$ be a retraction. If $\Pi$ is a $T_4$ space and $S_{\Pi}$ is a compact space, then $S_{\Pi}$ is a normal space.
\end{lemma}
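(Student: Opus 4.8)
The plan is to separate disjoint closed subsets of $(S_{\Pi};\tau_{h})$ by transporting them down to $\Pi$ through $f$, separating them there by normality of $\Pi$, and pulling the separating neighbourhoods back. First I would record two easy facts about the hypotheses: since $\Pi$ is $T_{4}$ it is $T_{1}$, hence an antichain by Theorem \ref{t1spa}, and it is Hausdorff; moreover $\Pi\subseteq S_{\Pi}$, so the retraction is genuinely a map of $S_{\Pi}$ onto a subspace fixing $\Pi$ pointwise.

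The heart of the argument is the preliminary claim that $Q\subseteq f(Q)$ for every $Q\in S_{\Pi}$, equivalently (by Lemma \ref{retractlemma}) that $f(Q)\in cl_{\tau_{h}}(\{Q\})$. To prove it, fix $Q\in S_{\Pi}$ and pick $P_{0}\in\Pi$ with $Q\subseteq P_{0}$, which exists by the definition of $S_{\Pi}$. By Lemma \ref{retractlemma} this gives $P_{0}\in cl_{\tau_{h}}(\{Q\})$, and continuity of $f$ together with Proposition \ref{comcont}\ref{comcont2} yields $f(P_{0})\in f(cl_{\tau_{h}}(\{Q\}))\subseteq cl_{\tau_{h}}(\{f(Q)\})$. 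Since $f$ is a retraction and $P_{0}\in\Pi$, we have $f(P_{0})=P_{0}$, so $P_{0}\in cl_{\tau_{h}}(\{f(Q)\})$ and hence $f(Q)\subseteq P_{0}$ by Lemma \ref{retractlemma} again. As $f(Q),P_{0}\in\Pi$ and $\Pi$ is an antichain, this forces $f(Q)=P_{0}\supseteq Q$, as claimed.

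With this in hand I would finish by the standard push-down/pull-back scheme. Let $C_{1},C_{2}$ be disjoint closed subsets of $S_{\Pi}$. For $Q\in C_{i}$ the claim gives $f(Q)\in cl_{\tau_{h}}(\{Q\})\subseteq C_{i}$, the last inclusion because $C_{i}$ is closed and contains $Q$; thus $f(C_{i})\subseteq C_{i}$, and therefore $f(C_{1})\cap f(C_{2})\subseteq C_{1}\cap C_{2}=\emptyset$. Since $S_{\Pi}$ is compact and $\Pi$ is Hausdorff, $f$ is a closed map by Lemma \ref{norhau}, so $f(C_{1})$ and $f(C_{2})$ are disjoint closed subsets of $\Pi$. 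Normality of $\Pi$ supplies disjoint open sets $V_{1},V_{2}$ of $\Pi$ with $f(C_{i})\subseteq V_{i}$. Putting $U_{i}=f^{-1}(V_{i})$, continuity makes each $U_{i}$ open, while $C_{i}\subseteq f^{-1}(f(C_{i}))\subseteq U_{i}$ and $U_{1}\cap U_{2}=f^{-1}(V_{1}\cap V_{2})=\emptyset$; hence $C_{1}$ and $C_{2}$ are separated and $S_{\Pi}$ is normal.

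The main obstacle is the preliminary claim $Q\subseteq f(Q)$: an arbitrary retraction need not respect the inclusion order, and it is precisely the interplay of continuity (Proposition \ref{comcont}\ref{comcont2}), the fixed-point property of the retraction, and the antichain condition on $\Pi$ (Theorem \ref{t1spa}) that identifies $f(Q)$ as the member of $\Pi$ lying above $Q$. Once this order-compatibility is secured, the disjointness of the images $f(C_{i})$ and the closedness of $f$ (via compactness of $S_{\Pi}$, Hausdorffness of $\Pi$, and Lemma \ref{norhau}) make the remaining separation entirely routine.
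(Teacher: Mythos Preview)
Your proof is correct and follows essentially the same push-down/pull-back scheme as the paper: use that $f$ is closed (compact domain, Hausdorff codomain, Lemma~\ref{norhau}), separate the images in the normal space $\Pi$, and pull back. The one difference is that the paper obtains $Q\subseteq f(Q)$ by citing Remark~\ref{remarkretr} (the retraction is necessarily the map sending $Q$ to the unique member of $\Pi$ above it), whereas you prove this directly from continuity, Lemma~\ref{retractlemma}, and the antichain condition; your route has the advantage of making the disjointness of $f(C_{1})$ and $f(C_{2})$ explicit via $f(C_{i})\subseteq C_{i}$, a step the paper leaves to the reader.
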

\begin{proof}
Since $(\Pi;\tau_{h})$ is a $T_4$ space so $\Pi$ is an antichain and by \textsc{Remark} \ref{remarkretr} follows that $f(Q)=P$ where $P$ is the unique element of $\Pi$ containing $Q$ for any $Q\in S_{\Pi}$. By Lemma \ref{norhau}, $f$ is a closed map. Let $C_1$ and $C_2$ be two disjoint closed sets in $S_{\Pi}$, so $f(C_1)$ and $f(C_2)$ are disjoint closed set in $\Pi$. Since $\Pi$ is normal, there exist disjoint open neighbourhoods $N_1$ and $N_2$ of $f(C_1)$  and $f(C_2)$ in $\Pi$, respectively. One can see that  $f^{-1}(N_1)$ and $f^{-1}(N_2)$ are disjoint open neighbourhoods of $C_1$ and $C_2$, respectively.
\end{proof}
\begin{lemma}\label{haulemm}
Let $\mathfrak{A}$ be a residuated lattice and $\Pi$ be an antichain of prime filters in $\mathfrak{A}$. If $(S_{\Pi};\tau_{h})$ is a normal space, then $(\Pi;\tau_{h})$ is a Hausdorff space.
\end{lemma}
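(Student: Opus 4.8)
The plan is to separate two distinct points of $\Pi$ directly inside the ambient normal space $(S_{\Pi};\tau_{h})$ and then restrict the separating open sets back to $\Pi$. First I would record two preliminary facts. The first is that the hull-kernel topology $\tau_{h}$ on $\Pi$ coincides with the subspace topology inherited from $(S_{\Pi};\tau_{h})$: for every $X\subseteq A$ the basic open set of the larger space satisfies $d_{S_{\Pi}}(X)\cap\Pi=\{P\in\Pi\mid X\nsubseteq P\}=d_{\Pi}(X)$, where I write $d_{S_{\Pi}}$ and $d_{\Pi}$ to distinguish the two collections. Since every open set of $S_{\Pi}$ is a union of the $d_{S_{\Pi}}(x)$, intersecting it with $\Pi$ yields a union of the $d_{\Pi}(x)$; hence any set of the form $U\cap\Pi$, with $U$ open in $S_{\Pi}$, is open in $(\Pi;\tau_{h})$.

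The second fact, which is the crux, is that every point of $\Pi$ is closed in $(S_{\Pi};\tau_{h})$. Fix $P\in\Pi\subseteq S_{\Pi}$. Applying Lemma \ref{retractlemma} to the collection $S_{\Pi}$, a point $Q\in S_{\Pi}$ lies in $cl_{\tau_{h}}(\{P\})$ if and only if $P\subseteq Q$. But $Q\in S_{\Pi}$ means $Q\subseteq P'$ for some $P'\in\Pi$, whence $P\subseteq Q\subseteq P'$ forces $P\subseteq P'$; since $\Pi$ is an antichain this gives $P=P'$ and therefore $Q=P$. Thus $cl_{\tau_{h}}(\{P\})=\{P\}$, i.e. $\{P\}$ is closed in $S_{\Pi}$. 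The antichain hypothesis is used in an essential way here.

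With these in hand the conclusion is immediate. Let $P_{1}\neq P_{2}$ be elements of $\Pi$. By the second fact $\{P_{1}\}$ and $\{P_{2}\}$ are disjoint, nonempty, closed subsets of the normal space $(S_{\Pi};\tau_{h})$, so there exist disjoint open sets $U_{1},U_{2}$ of $S_{\Pi}$ with $P_{1}\in U_{1}$ and $P_{2}\in U_{2}$. By the first fact $U_{1}\cap\Pi$ and $U_{2}\cap\Pi$ are open in $(\Pi;\tau_{h})$; they contain $P_{1}$ and $P_{2}$ respectively and satisfy $(U_{1}\cap\Pi)\cap(U_{2}\cap\Pi)=(U_{1}\cap U_{2})\cap\Pi=\emptyset$. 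Hence $(\Pi;\tau_{h})$ is a Hausdorff space.

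I expect the only genuine obstacle to be the closed-points claim: one must invoke Lemma \ref{retractlemma} for the \emph{larger} collection $S_{\Pi}$ and then use that every $Q\in S_{\Pi}$ sits below some member of $\Pi$, so that the antichain property collapses the closure of a $\Pi$-point to the point itself. I would also note that one cannot shortcut the argument by constructing a retraction $(S_{\Pi};\tau_{h})\longrightarrow(\Pi;\tau_{h})$ and appealing to Proposition \ref{retrano}\ref{retrano1}, because the existence of such a retraction is, via Propositions \ref{chauspro} and \ref{hauspro}, equivalent to the very Hausdorffness we are trying to establish; the direct separation argument above sidesteps this circularity.
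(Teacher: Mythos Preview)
Your proof is correct and follows essentially the same approach as the paper's: show that each point of $\Pi$ is closed in $(S_{\Pi};\tau_{h})$ via Lemma~\ref{retractlemma} together with the antichain hypothesis, apply normality to separate the two closed singletons, and restrict the resulting open sets to $\Pi$. Your write-up is in fact more careful than the paper's, which leaves the step ``$P\subseteq Q$ implies $P=Q$'' and the subspace-topology identification implicit.
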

\begin{proof}
Let $P\in \Pi$ and $Q\in cl_{S_{\Pi}}(\{P\})$. By Lemma \ref{retractlemma} we have $P\subseteq Q$ and it implies that $p=Q$ and so $cl_{S_{\Pi}}(\{P\})=\{P\}$. It shows that $\{P\}$ is a closed set in $S_{\Pi}$. Let $P_1$ and $P_2$ be distinct elements of $\Pi$. Since $S_{\Pi}$ is normal so there exist disjoint neighborhoods $N_1$ and $N_2$ for $P_1$ and $P_2$ in $S_{\Pi}$, respectively. Therefore, $N_1\cap \Pi$ and $N_2\cap \Pi$ are disjoint neighborhoods for $P_1$ and $P_2$ in $\Pi$, respectively.
\end{proof}
\begin{proposition}\label{compapropo}
Let $\mathfrak{A}$ be a residuated lattice and $\Pi$ be a collection of prime filters in $\mathfrak{A}$. The following assertions hold:
\begin{enumerate}
   \item [(1) \namedlabel{compapropo1}{(1)}] $(\Pi;\tau_{h})$ is a compact space, provided that $\Pi$ is full. In particular, $(Spec(\mathfrak{A});\tau_{h})$ and $(Max(\mathfrak{A});\tau_{h})$ are compact;
   \item [(2) \namedlabel{compapropo2}{(2)}] $(\Pi;\tau_{d})$ is a compact space, provided that $\Pi$ contains $Min_{\bigcap \Pi}(\mathfrak{A})$. In particular, $(Spec(\mathfrak{A});\tau_{d})$ is compact;
 \end{enumerate}
\end{proposition}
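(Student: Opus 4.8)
The plan is to verify compactness through its equivalent open-cover formulation, checking that every cover by \emph{basic} open sets admits a finite subcover. For $(\Pi;\tau_{h})$ the relevant base is $\{d(x)\}_{x\in A}$, while for $(\Pi;\tau_{d})$ it is $\{h(x)\}_{x\in A}$. I expect the two parts to behave quite differently, because the family $\{d(X)\}$ is closed under arbitrary unions (Proposition \ref{oprispeprop}\ref{oprispeprop5}) whereas $\{h(x)\}$ has no such additivity; so \ref{compapropo1} should collapse directly, while \ref{compapropo2} will need a genuine existence argument.

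For \ref{compapropo1} I would start from a basic cover $\{d(x)\}_{x\in X}$ of $\Pi$. By Proposition \ref{oprispeprop}\ref{oprispeprop5} this union equals $d(X)$, whence $d(X)=\Pi$. Fullness of $\Pi$ together with Proposition \ref{oprispeprop}\ref{oprispeprop4} then forces $\mathscr{F}(X)=A$, and Remark \ref{genfilprop}\ref{genfilprop6} supplies a finite $Y\subseteq X$ with $\mathscr{F}(Y)=A$. Reading Proposition \ref{oprispeprop}\ref{oprispeprop4} in the reverse direction gives $d(Y)=\Pi$, so $\{d(y)\}_{y\in Y}$ is the sought finite subcover. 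The ``in particular'' clause is immediate, since $Max(\mathfrak{A})$ and $Spec(\mathfrak{A})$ are full.

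For \ref{compapropo2} I would write $F=\bigcap\Pi$ and argue by contradiction: assume $\{h(x)\}_{x\in X}$ covers $\Pi$ but no finite subfamily does. Taking complements in $\Pi$ and applying Proposition \ref{1adprispeprop}\ref{1adprispeprop2} repeatedly, the failure of a finite subset $Y\subseteq X$ to cover becomes $d\!\left(\bigvee Y\right)\neq\emptyset$, which by Proposition \ref{oprispeprop}\ref{oprispeprop2} says $\bigvee Y\notin F$. Hence the $\vee$-closure $\mathscr{C}=\{\bigvee Y\mid Y\subseteq X\text{ finite and non-empty}\}$ is a $\vee$-closed set disjoint from $F$. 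By Lemma \ref{0prfiltth} it extends to a $\vee$-closed set maximal with respect to not meeting $F$, and by Theorem \ref{1mineq} the complement $P$ of that maximal set is an $F$-minimal prime filter. The hypothesis $Min_{\bigcap\Pi}(\mathfrak{A})\subseteq\Pi$ places $P$ in $\Pi$, while $X\subseteq\mathscr{C}\subseteq P^{c}$ gives $x\notin P$ for every $x\in X$, so $P$ lies in no $h(x)$ --- contradicting that the $h(x)$ cover $\Pi$. For the ``in particular'' case $\Pi=Spec(\mathfrak{A})$ one has $\bigcap Spec(\mathfrak{A})=\{1\}$ by Corollary \ref{intprimfilt}\ref{intprimfilt2}, so $Min_{\bigcap\Pi}(\mathfrak{A})=Min(\mathfrak{A})\subseteq Spec(\mathfrak{A})$ and the hypothesis is satisfied.

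I expect the reductions in \ref{compapropo1} to be routine; the crux is the existence step in \ref{compapropo2}. The difficulty is that, lacking any union formula for the sets $h(x)$, one cannot simply shrink the cover and must instead manufacture a prime filter escaping it. The precise role of the hypothesis $Min_{\bigcap\Pi}(\mathfrak{A})\subseteq\Pi$ is to guarantee that the minimal prime filter produced by the maximal $\vee$-closed set belongs to $\Pi$; without it the constructed $P$ need not lie in $\Pi$ and the contradiction would collapse.
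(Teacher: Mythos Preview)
Your proposal is correct and follows essentially the same approach as the paper. Part \ref{compapropo1} is identical; for part \ref{compapropo2} the paper phrases the argument via the finite-intersection property of the basic closed sets $d(x)$ in $(\Pi;\tau_d)$ rather than by contradiction with an open cover by the $h(x)$, but the substantive steps---reducing to $\bigvee Y\notin\bigcap\Pi$ via Propositions \ref{1adprispeprop}\ref{1adprispeprop2} and \ref{oprispeprop}\ref{oprispeprop2}, extending the $\vee$-closure of $X$ to a maximal $\vee$-closed set via Lemma \ref{0prfiltth}, and invoking Theorem \ref{1mineq} to obtain a $\bigcap\Pi$-minimal prime filter in $\Pi$---are the same.
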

\begin{proof}
\item [\ref{compapropo1}:] Let $\Pi=\bigcup_{x\in X}d(x)$. By Proposition \ref{oprispeprop}\ref{oprispeprop5} follows that $\Pi=d(X)$ and so by Proposition \ref{oprispeprop}\ref{oprispeprop4} we get that $\mathscr{F}(X)=A$. So by \textsc{Remark} \ref{genfilprop}\ref{genfilprop6}, $A=\mathscr{F}(Y)$ for a finite subset $Y\subseteq X$. Therefore, $\Pi=d(Y)=\bigcup_{y\in Y}d(y)$. It holds the result. The remind is evident, since $Spec(\mathfrak{A})$ and $Max(\mathfrak{A})$ are full.
\item [\ref{compapropo2}:] Let $X$ be a subset of $A$ such that for any finite subset $Y\subseteq X$ we have $\bigcap_{y\in Y}d(y)\neq \emptyset$. Using Proposition \ref{1adprispeprop}\ref{1adprispeprop2}, it results that $d(\bigvee Y)\neq\emptyset$ and so by Proposition \ref{oprispeprop}\ref{oprispeprop2} we obtain that $\bigvee Y\notin \bigcap \Pi$. It shows that $\mathscr{C}(X)\cap \bigcap \Pi=\emptyset$. By Lemma \ref{0prfiltth}, there exists a maximal $\vee$-closed subset of $\mathfrak{A}$, named $C_{X}$, such that not meeting $\bigcap \Pi$. By Minimal prime filter theorem, it follows that $\mathscr{M}=A\setminus C_{X}$ is a $\bigcap \Pi$-minimal prime filter and so $\mathscr{M}\in \Pi$. Thus $\mathscr{M}\in d(x)$ for any $x\in X$. It concludes that $\bigcap_{x\in X}d(x)\neq\emptyset$. Hence the result holds. The remind is evident.
\end{proof}
\begin{corollary}\label{hausnorm}
Let $\mathfrak{A}$ be a residuated lattice. Then $(Max(\mathfrak{A});\tau_{h})$ is a Hausdorff space if and only if $(Spec(\mathfrak{A});\tau_{h})$ is a normal space.
\end{corollary}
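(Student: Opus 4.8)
The plan is to prove the two implications separately, in each case exploiting that $\Pi=Max(\mathfrak{A})$ is an antichain (hence $T_1$ by Theorem \ref{t1spa}) and compact (Proposition \ref{compapropo}\ref{compapropo1}), together with the corresponding properties of $Spec(\mathfrak{A})$. The bridge between the two spaces is the set $S_{Max(\mathfrak{A})}$, and the first thing I would record is its concrete description: since every proper (hence every prime) filter is contained in a maximal filter, the clause ``$Q\subseteq P$ for some $P\in Max(\mathfrak{A})$'' in the definition of $S_{\Pi}$ is automatic, so $S_{Max(\mathfrak{A})}=\{Q\in Spec(\mathfrak{A})\mid \bigcap Max(\mathfrak{A})\subseteq Q\}=h_{Spec(\mathfrak{A})}(\bigcap Max(\mathfrak{A}))$. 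In particular $S_{Max(\mathfrak{A})}$ is a \emph{closed} subspace of $(Spec(\mathfrak{A});\tau_h)$.

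For the implication ``$(Spec(\mathfrak{A});\tau_h)$ normal $\Rightarrow$ $(Max(\mathfrak{A});\tau_h)$ Hausdorff'' I would argue as follows. By the previous remark $S_{Max(\mathfrak{A})}$ is closed in $Spec(\mathfrak{A})$, and a closed subspace of a normal space is normal; hence $(S_{Max(\mathfrak{A})};\tau_h)$ is normal. Since $Max(\mathfrak{A})$ is an antichain, Lemma \ref{haulemm} then yields at once that $(Max(\mathfrak{A});\tau_h)$ is a Hausdorff space. This direction is the routine one.

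For the converse ``$(Max(\mathfrak{A});\tau_h)$ Hausdorff $\Rightarrow$ $(Spec(\mathfrak{A});\tau_h)$ normal'' I would first upgrade the separation of $Max(\mathfrak{A})$: being simultaneously compact (Proposition \ref{compapropo}\ref{compapropo1}), Hausdorff and $T_1$, the space $Max(\mathfrak{A})$ is normal (every compact Hausdorff space is normal, \citep{eng}), and therefore $T_4$. By Theorem \ref{hausnorm} (the equivalence \ref{hausnorm3}$\Leftrightarrow$\ref{hausnorm2}) the Hausdorffness of $Max(\mathfrak{A})$ provides a retraction $f:(Spec(\mathfrak{A});\tau_h)\longrightarrow (Max(\mathfrak{A});\tau_h)$, and $Spec(\mathfrak{A})$ is compact by Proposition \ref{compapropo}\ref{compapropo1}. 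I would then invoke Lemma \ref{hasnorm} to conclude that the compact total space carrying a retraction onto the $T_4$ space $Max(\mathfrak{A})$ is normal, i.e. that $(Spec(\mathfrak{A});\tau_h)$ is normal.

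The delicate point, and the one I would take care to spell out, is exactly this last invocation. Lemma \ref{hasnorm} is phrased for the canonical total space $S_{\Pi}$, whereas here the retraction furnished by Theorem \ref{hausnorm} lives on all of $Spec(\mathfrak{A})$, and $S_{Max(\mathfrak{A})}=h(\bigcap Max(\mathfrak{A}))$ may be a \emph{proper} closed subset of $Spec(\mathfrak{A})$ (equality holds precisely when $\bigcap Max(\mathfrak{A})=\{1\}$). What makes the argument go through is that the proof of Lemma \ref{hasnorm} uses only that $f$ is a retraction of a \emph{compact} space onto a $T_4$ space: the description $f(Q)=$ the unique maximal filter above $Q$ (Remark \ref{remarkretr} together with Theorem \ref{hausnorm}), the closedness of $f$ (Lemma \ref{norhau}, via compactness of $Spec(\mathfrak{A})$ and Hausdorffness of $Max(\mathfrak{A})$), and the pullback of a separating pair of open sets. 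Hence the lemma applies verbatim with $Spec(\mathfrak{A})$ in the role of $S_{\Pi}$, which is the only step where one must look inside the cited proof rather than merely quote its statement.
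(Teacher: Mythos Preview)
Your proof is correct and follows essentially the same route as the paper: for the forward implication you combine Theorem \ref{hausnorm} (retraction), Proposition \ref{compapropo}\ref{compapropo1} (compactness, to upgrade Hausdorff to $T_4$), and Lemma \ref{hasnorm}; for the converse you invoke Lemma \ref{haulemm}. The paper does exactly this, only more tersely. Your added care about the discrepancy between $S_{Max(\mathfrak{A})}=h_{Spec(\mathfrak{A})}(\bigcap Max(\mathfrak{A}))$ and $Spec(\mathfrak{A})$---noting that the former is a closed (hence normal) subspace for the backward direction, and that the proof of Lemma \ref{hasnorm} goes through verbatim with $Spec(\mathfrak{A})$ in place of $S_{\Pi}$ for the forward direction---is a point the paper itself passes over in silence, so your version is in fact the more complete of the two.
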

\begin{proof}
If $Max(\mathfrak{A})$ is a Hausdorff space, then $Max(\mathfrak{A})$ is retract of $Spec(\mathfrak{A})$ by Theorem \ref{hausnorm}, and $Max(\mathfrak{A})$ is a $T_4$ space by Proposition \ref{compapropo}. It states that $Spec(\mathfrak{A})$ is a normal space by Lemma \ref{hasnorm}. The converse is evident by Lemma \ref{haulemm}.
\end{proof}
\begin{corollary}
Let $\mathfrak{A}$ be a MTL algebra. Then, $(Max(\mathfrak{A});\tau_{h})$ is a Hausdorff space and $Spec(\mathfrak{A})$ is a normal space.
\end{corollary}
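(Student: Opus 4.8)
The plan is to derive both statements directly from the two corollaries immediately preceding it, the only genuinely new input being an elementary observation about maximal filters. First I would note that $Max(\mathfrak{A})$ is always an antichain: if $M_1,M_2\in Max(\mathfrak{A})$ with $M_1\subseteq M_2$, then since both are proper, maximality of $M_1$ forces $M_1=M_2$, so no two distinct maximal filters are comparable. Because $Max(\mathfrak{A})\subseteq Spec(\mathfrak{A})$, this is a collection of prime filters forming an antichain.

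For the Hausdorff claim, I would then invoke Corollary \ref{hausantichmtl}: since $\mathfrak{A}$ is an MTL algebra and $Max(\mathfrak{A})$ is an antichain of prime filters, that corollary gives at once that $(Max(\mathfrak{A});\tau_{h})$ is a Hausdorff space.

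For the normality claim, I would feed this conclusion into Corollary \ref{hausnorm}, which asserts that $(Max(\mathfrak{A});\tau_{h})$ is Hausdorff if and only if $(Spec(\mathfrak{A});\tau_{h})$ is normal. Since the left-hand side has just been established, $(Spec(\mathfrak{A});\tau_{h})$ is normal, completing the argument.

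There is essentially no obstacle here: the assertion is a formal consequence of the machinery already assembled, and the only thing to check by hand is the antichain property of $Max(\mathfrak{A})$, which is immediate from the definition of a maximal filter. The substantive content has already been absorbed into Corollary \ref{hausantichmtl} (which rests on the pre-linearity condition via Corollary \ref{mtlcloant} and Theorem \ref{hauspro}) and into Corollary \ref{hausnorm} (which rests on the retraction and compactness results), so the present statement merely specializes and splices them together.
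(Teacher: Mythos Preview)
Your proposal is correct and matches the paper's own proof, which simply cites Corollary \ref{hausantichmtl} and Corollary \ref{hausnorm}. The only difference is that you make explicit the trivial antichain property of $Max(\mathfrak{A})$ needed to invoke Corollary \ref{hausantichmtl}, which the paper leaves implicit.
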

\begin{proof}
It is a direct result of Corollary \ref{hausantichmtl} and \ref{hausnorm}.
\end{proof}

\section{The space of minimal prime filters}\label{sec4}

In this section, we focused on the space of minimal prime filters in a residuated lattice. From now on, all hulls and kernels refer to $Min(\mathfrak{A})$.

\begin{proposition}\label{hausantich}
Let $\mathfrak{A}$ be a residuated lattice and $F$ be a filter of $\mathfrak{A}$. Then $Min_{F}(\mathfrak{A})$ is a Hausdorff space.
\end{proposition}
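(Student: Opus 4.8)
The plan is to invoke the Hausdorff criterion of Theorem \ref{hauspro}: with $\Pi:=Min_{F}(\mathfrak{A})$, the space $(\Pi;\tau_{h})$ is Hausdorff precisely when $\Pi$ is $\bigcap\Pi$-closed. So the whole argument reduces to two tasks: identifying $\bigcap\Pi$ and verifying the closedness condition relative to it. For the first, I would apply Corollary \ref{mininters}\ref{mininters2} with $X=F$; since $F$ is already a filter we have $\mathscr{F}(F)=F$, whence $\bigcap Min_{F}(\mathfrak{A})=F$. Thus the target condition is exactly $F$-closedness, and the ambient filter of the definition sits as $F=\bigcap\Pi$, so the requirement $F\subseteq\bigcap\Pi$ is met with equality.

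Next I would verify that $\Pi=Min_{F}(\mathfrak{A})$ is $F$-closed. Fix distinct $P_1,P_2\in\Pi$. Being distinct minimal elements of the poset of prime filters containing $F$, they are incomparable; in particular I may choose $a\in P_1\setminus P_2$. The $F$-minimality of $P_1$ now enters through Theorem \ref{mincor}: since $P_1=D_{F}(P_1)$, the membership $a\in P_1$ forces $(F:a)\nsubseteq P_1$. Hence there is some $b\in(F:a)\setminus P_1$, and by the definition of the coannihilator, $b\in(F:a)$ means precisely $a\vee b\in F$.

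The key observation, and the only subtle point, is that the separating element $a$ does double duty: we already have $a\notin P_2$, while $b\notin P_1$ comes from the minimality of $P_1$, and together they satisfy $a\vee b\in F$. Setting $a_1:=b$ and $a_2:=a$ yields $a_1\notin P_1$, $a_2\notin P_2$ and $a_1\vee a_2\in F$, which is exactly the $F$-closedness requirement. With $\bigcap\Pi=F$ already established, Theorem \ref{hauspro} then delivers that $(Min_{F}(\mathfrak{A});\tau_{h})$ is a Hausdorff space. The main obstacle is conceptual rather than computational: recognising that the coannihilator characterisation of $F$-minimal primes in Theorem \ref{mincor} produces, for a given $a\in P_1$, a partner $b\notin P_1$ with $a\vee b\in F$, and that this partner together with $a$ itself furnishes exactly the two elements demanded by the $F$-closed condition.
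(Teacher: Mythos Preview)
Your proposal is correct and follows essentially the same approach as the paper: both reduce to Theorem~\ref{hauspro}, identify $\bigcap Min_{F}(\mathfrak{A})=F$ via Corollary~\ref{mininters}, and use the characterisation $\mathfrak{m}=D_{F}(\mathfrak{m})$ from Theorem~\ref{mincor} to establish $F$-closedness. The paper's proof is terser, simply citing these two facts and leaving the reader to unpack how $\mathfrak{m}=D_{F}(\mathfrak{m})$ yields $F$-closedness (implicitly via Proposition~\ref{pmprop}), whereas you spell out the extraction of the separating pair $(b,a)$ explicitly; the underlying argument is the same.
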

\begin{proof}
It follows by Theorem \ref{hauspro} since $\bigcap Min_{F}(\mathfrak{A})=F$ and for any $\mathfrak{m}\in Min_{F}(\mathfrak{A})$ we have $\mathfrak{m}=D_{F}(\mathfrak{m})$.
\end{proof}
\begin{theorem}\label{dmincompa}
Let $\mathfrak{A}$ be a residuated lattice. Then $(Min(\mathfrak{A});\tau_{d})$ is compact.
\end{theorem}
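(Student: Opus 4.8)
The plan is to derive the statement from the compactness result already established for the dual hull-kernel topology in Proposition \ref{compapropo}\ref{compapropo2}. That proposition guarantees that $(\Pi;\tau_{d})$ is compact whenever $\Pi$ contains $Min_{\bigcap \Pi}(\mathfrak{A})$, so the whole task reduces to verifying this hypothesis for $\Pi=Min(\mathfrak{A})$. First I would compute $\bigcap Min(\mathfrak{A})$: applying Corollary \ref{mininters}\ref{mininters2} with $X=\{1\}$ gives $\bigcap Min(\mathfrak{A})=\bigcap Min_{\{1\}}(\mathfrak{A})=\mathscr{F}(1)=\{1\}$. Consequently $Min_{\bigcap Min(\mathfrak{A})}(\mathfrak{A})=Min_{\{1\}}(\mathfrak{A})=Min(\mathfrak{A})$, so the containment required by Proposition \ref{compapropo}\ref{compapropo2} holds (indeed with equality), and compactness follows at once.

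For completeness I would also spell out the self-contained argument, which is precisely the content hidden inside Proposition \ref{compapropo}\ref{compapropo2}. Since $\{h(x)\mid x\in A\}$ is a base for $\tau_{d}$, it suffices to extract a finite subcover from an arbitrary basic cover $Min(\mathfrak{A})=\bigcup_{i\in I}h(x_{i})$. Passing to complements, the absence of a finite subcover means exactly that $\bigcap_{i\in J}d(x_{i})\neq\emptyset$ for every finite $J\subseteq I$. Using $d(x)\cap d(y)=d(x\vee y)$ (Proposition \ref{1adprispeprop}\ref{1adprispeprop2}) together with the fact that $d(z)=\emptyset$ if and only if $z\in\bigcap Min(\mathfrak{A})=\{1\}$ (Proposition \ref{oprispeprop}\ref{oprispeprop2}), this translates into the purely algebraic condition that $\bigvee_{i\in J}x_{i}\neq 1$ for every finite $J$.

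Now I would construct a point omitted by the cover. The set $\mathscr{C}$ of all finite joins $\bigvee_{i\in J}x_{i}$ (over nonempty finite $J\subseteq I$) is $\vee$-closed and, by the previous step, does not contain $1$; that is, $\mathscr{C}$ does not meet the filter $\{1\}$. By Lemma \ref{0prfiltth} it extends to a $\vee$-closed set $\textsf{C}$ maximal with respect to not meeting $\{1\}$, and by the Minimal prime filter theorem (Theorem \ref{1mineq}) the complement $P=\textsf{C}^{c}$ is a $\{1\}$-minimal prime filter, so $P\in Min(\mathfrak{A})$. Since each $x_{i}\in\mathscr{C}\subseteq\textsf{C}=P^{c}$, no $x_{i}$ lies in $P$, whence $P\notin\bigcup_{i\in I}h(x_{i})$, contradicting that the $h(x_{i})$ cover $Min(\mathfrak{A})$. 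Therefore a finite subcover must exist.

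The single substantive step is this last construction of an uncovered minimal prime; everything else is bookkeeping. In the unfolded form the main obstacle is recognizing that ``no finite subcover'' is equivalent to the condition that all finite joins of the $x_{i}$ avoid $1$, after which the Minimal prime filter theorem does the real work. In the quick form the only thing to check is $\bigcap Min(\mathfrak{A})=\{1\}$, so that the hypothesis of Proposition \ref{compapropo}\ref{compapropo2} is satisfied.
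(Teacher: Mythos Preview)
Your proposal is correct and follows exactly the paper's approach: the paper's proof is simply ``It is a direct consequence of Proposition \ref{compapropo}\ref{compapropo2},'' and you have supplied precisely the verification (via Corollary \ref{mininters}\ref{mininters2}) that $\bigcap Min(\mathfrak{A})=\{1\}$ needed to invoke that proposition. The additional unfolded argument you give is just the proof of Proposition \ref{compapropo}\ref{compapropo2} specialized to $\Pi=Min(\mathfrak{A})$, so there is nothing new or divergent there either.
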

\begin{proof}
It is a direct consequence of Proposition \ref{compapropo}\ref{compapropo2}.
\end{proof}
\begin{proposition}\label{minpro}
Let $\mathfrak{A}$ be a residuated lattice.
\begin{enumerate}
   \item [(1) \namedlabel{minpro1}{(1)}] $kd(X)=X^{\perp}$;
   \item [(2) \namedlabel{minpro2}{(2)}] $h(x)\cap h(x^{\perp})=\emptyset$;
   \item [(3) \namedlabel{minpro3}{(3)}] $d(x)=h(x^{\perp})$ and $d(x^{\perp})=h(x)$. In particular, $hkd(x)=d(x)$;
   \item [(4) \namedlabel{minpro4}{(4)}] $kh(X^{\perp})=X^{\perp}$;
   \item [(5) \namedlabel{minpro5}{(5)}] $h(x)=h(x^{\perp\perp})$;
   \item [(6) \namedlabel{minpro6}{(6)}] $h(x^{\perp})=h(y)$ if and only if $x^{\perp\perp}=y^{\perp}$.
 \end{enumerate}
\end{proposition}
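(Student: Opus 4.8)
The plan is to treat the six items in dependency order, bootstrapping from the general machinery of Section \ref{sec3} specialised to $\Pi=Min(\mathfrak{A})$, where $\bigcap\Pi=\bigcap Min(\mathfrak{A})=\{1\}$ and every $P\in Min(\mathfrak{A})$ is a $\{1\}$-minimal prime filter with $(\{1\}:x)=x^{\perp}$. This last fact lets me invoke Theorem \ref{mincor}\ref{mincor3} in the form: a minimal prime filter contains precisely one of $x$ or $x^{\perp}$.

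Items \ref{minpro1}--\ref{minpro4} I would dispatch quickly. Item \ref{minpro1} is literally Proposition \ref{oprispeprop}\ref{oprispeprop8} once we record $\bigcap Min(\mathfrak{A})=\{1\}$. For \ref{minpro2}, if $P\in h(x)\cap h(x^{\perp})$ then $x\in P$ and $x^{\perp}\subseteq P$, contradicting Theorem \ref{mincor}\ref{mincor3}. For \ref{minpro3} I combine \ref{minpro2} with the covering identity $h(x)\cup h(x^{\perp})=\Pi$ (Proposition \ref{3hopro}\ref{3hopro7}): the two hulls partition $\Pi$, so $h(x^{\perp})=\Pi\setminus h(x)=d(x)$ and dually $h(x)=\Pi\setminus h(x^{\perp})=d(x^{\perp})$; the ``in particular'' then follows since $hkd(x)=hkh(x^{\perp})=h(x^{\perp})=d(x)$ by Corollary \ref{hukerprop}\ref{hukerprop5}. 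Item \ref{minpro4} reduces to \ref{minpro1}: writing $X^{\perp}=kd(X)$ and using $khk=k$ (Corollary \ref{hukerprop}\ref{hukerprop5}) gives $kh(X^{\perp})=kh(kd(X))=khk(d(X))=kd(X)=X^{\perp}$.

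The substantive new computation is \ref{minpro5}, $h(x)=h(x^{\perp\perp})$. The inclusion $h(x^{\perp\perp})\subseteq h(x)$ is immediate because $x\in x^{\perp\perp}$ (every $a\in x^{\perp}$ satisfies $x\vee a=1$), so $x^{\perp\perp}\subseteq P$ forces $x\in P$. For $h(x)\subseteq h(x^{\perp\perp})$, take $P\in h(x)$, so $x\in P$; by Theorem \ref{mincor}\ref{mincor3} then $x^{\perp}\nsubseteq P$, i.e.\ there is $a\in x^{\perp}\setminus P$. For any $b\in x^{\perp\perp}$ we have $a\vee b=1\in P$, and primeness of $P$ with $a\notin P$ yields $b\in P$; hence $x^{\perp\perp}\subseteq P$, as desired. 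This primeness argument is the crux of the item.

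Finally, for \ref{minpro6} I would first record the clean intermediate equivalence $h(x^{\perp})=h(y)\iff x^{\perp}=y^{\perp\perp}$. The backward direction is \ref{minpro5}: $x^{\perp}=y^{\perp\perp}$ gives $h(x^{\perp})=h(y^{\perp\perp})=h(y)$. The forward direction applies $k$ and uses $kh(x^{\perp})=x^{\perp}$ (item \ref{minpro4}) together with $kh(y)=kd(y^{\perp})=y^{\perp\perp}$ (items \ref{minpro3} and \ref{minpro1}). It then remains to pass between $x^{\perp}=y^{\perp\perp}$ and the \emph{stated} form $x^{\perp\perp}=y^{\perp}$, which is where I expect the main obstacle: it requires the involutivity identity $Z^{\perp\perp\perp}=Z^{\perp}$. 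I would justify this by noting that $X\subseteq Y^{\perp}\iff Y\subseteq X^{\perp}$ (both assert $x\vee y=1$ for all $x\in X$, $y\in Y$), so $(\perp,\perp)$ is a symmetric Galois connection on $\mathcal{P}(A)$ and hence $\perp\perp\perp=\perp$ by Proposition \ref{fgpropep}\ref{fgpropep1}; applying $\perp$ to $x^{\perp}=y^{\perp\perp}$ then gives $x^{\perp\perp}=y^{\perp}$, and conversely, completing the equivalence.
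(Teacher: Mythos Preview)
Your proof is correct throughout, and items \ref{minpro1}--\ref{minpro3} match the paper exactly. The interesting divergences are in \ref{minpro4}--\ref{minpro6}.

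For \ref{minpro4} your argument is genuinely different and considerably shorter than the paper's. The paper proves $kh(X^{\perp})=X^{\perp}$ by a direct construction: given $a\notin X^{\perp}$ it picks $x\in X$ with $a\vee x\neq 1$, builds an $X^{\perp}$-minimal prime filter $\mathfrak{m}$ avoiding $a\vee x$ (via the prime filter theorem and Corollary \ref{primeminimal}), and then argues that $\mathfrak{m}$ is in fact a $\{1\}$-minimal prime filter by showing $\mathfrak{m}=D(\mathfrak{m})$. Your one-line reduction $kh(X^{\perp})=khk(d(X))=kd(X)=X^{\perp}$, using only item \ref{minpro1} and the Galois identity $khk=k$, bypasses all of that and is a real simplification.

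For \ref{minpro5} the paper instead chains the already-established identities: $h(x^{\perp\perp})=h(kd(x^{\perp}))=hk(h(x))=h(x)$, using \ref{minpro1}, \ref{minpro3}, and $hkh=h$. Your direct primeness argument is perfectly valid but slightly longer; the paper's route is what your own slick proof of \ref{minpro4} would suggest.

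For \ref{minpro6} the paper records only ``evident by \ref{minpro1} and \ref{minpro5}''. The intended forward direction is a touch more direct than yours: from $h(x^{\perp})=h(y)$ pass to complements $d(x^{\perp})=d(y)$ and apply $k$ with \ref{minpro1} to get $x^{\perp\perp}=y^{\perp}$ immediately, without going through \ref{minpro4} or the $\perp\!\perp\!\perp=\perp$ identity. Your route via the intermediate form $x^{\perp}=y^{\perp\perp}$ and the symmetric Galois connection $(\perp,\perp)$ is correct and makes the involutivity step explicit, which the paper leaves tacit.
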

\begin{proof}
\begin{enumerate}
  \item [\ref{minpro1}:] It follows by Proposition \ref{oprispeprop}\ref{oprispeprop8}.
  \item [\ref{minpro2}:] It follows by Theorem \ref{mincor}\ref{mincor3}.
  \item [\ref{minpro3}:] By Proposition \ref{3hopro}\ref{3hopro7} we have $h(x)\cup h(x^{\perp})=Min(\mathfrak{A})$ and by \ref{minpro2} we have $h(x)\cap h(x^{\perp})=\emptyset$. It shows the result.
  \item [\ref{minpro4}:] It is obvious that $X^{\perp}\subseteq kh(X^{\perp})$. Conversely, let $a\notin X^{\perp}$. So there exists $x\in X$  so that $a\vee x\neq 1$. It implies that $a\vee x\notin X^{\perp}$. By Theorem \ref{prfilth} and Corollary \ref{primeminimal} there exists a $X^{\perp}$-minimal prime filter $\mathfrak{m}$ which not containing $a\vee x$. Let $m\in \mathfrak{m}=D_{X^{\perp}}(\mathfrak{m})$. So there exists $b\notin \mathfrak{m}$ such that $m\vee b\in X^{\perp}$. It means that $m\vee(b\vee x)=1$, but $b\vee x\notin \mathfrak{m}$ since $b,x\notin \mathfrak{m}$. So $m\in D(\mathfrak{m})$ and it shows that $\mathfrak{m}$ is a minimal prime filter. Consequently, $a\notin kh(X^{\perp})$. So the result holds.
  \item [\ref{minpro5}:] By Proposition \ref{hukerprop}\ref{hukerprop5}, \ref{minpro1} and \ref{minpro3} we have $h(x^{\perp\perp})=hkd(x^{\perp})=hkh(x)=h(x)$.
  \item [\ref{minpro6}:] It is evident by \ref{minpro1} and \ref{minpro5}.
\end{enumerate}
\end{proof}

Recalling that a topological space $(A;\tau)$ is called \textit{zero-dimensional} if it has a base for open sets consisting of clopen sets. Also, $(A;\tau)$ is called \textit{totally disconnected} if for any distinct points $a,b\in A$, there exists a
clopen subset $U$ such that $x\in U$ and $y\notin U$. It is well-known that any $T_1$ zero-dimensional space is totally disconnected \citep[Theorem 6.2.1]{eng}. Let $\tau$ and $\zeta$ be two topologies on $A$. We say that $\tau$ is finer than $\zeta$, if $\zeta\subseteq \tau$.
\begin{corollary}\label{minspapro}
Let $\mathfrak{A}$ be a residuated lattice. The following assertions hold:
 \begin{enumerate}
   \item [(1) \namedlabel{minspapro1}{(1)}] $(Min(\mathfrak{A});\tau_{h})$ is zero-dimensional and consequently totally disconnected;
   \item [(2) \namedlabel{minspapro2}{(2)}] $\tau_{d}$ is finer than $\tau_{h}$ on $Min(\mathfrak{A})$.
 \end{enumerate}
 \end{corollary}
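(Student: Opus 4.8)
The plan is to read both parts off the self-dual identities $d(x)=h(x^{\perp})$ and $hkd(x)=d(x)$ of Proposition \ref{minpro}\ref{minpro3}, all hulls and kernels being taken in $Min(\mathfrak{A})$.

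For \ref{minspapro1}, recall that $\{d(x)\mid x\in A\}$ is a base of $(Min(\mathfrak{A});\tau_{h})$. Each such basic open set is in fact clopen: $d(x)$ is open by definition, while the identity $hkd(x)=d(x)$ of Proposition \ref{minpro}\ref{minpro3} states exactly that $d(x)$ equals its own $\tau_{h}$-closure, hence is $\tau_{h}$-closed. Thus $\tau_{h}$ admits a base of clopen sets and $(Min(\mathfrak{A});\tau_{h})$ is zero-dimensional. Since $Min(\mathfrak{A})$ is an antichain it is a $T_{1}$ space by Theorem \ref{t1spa}, and a $T_{1}$ zero-dimensional space is totally disconnected by \citep[Theorem 6.2.1]{eng}; this gives \ref{minspapro1}.

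For \ref{minspapro2} I must show $\tau_{h}\subseteq\tau_{d}$, and since $\{d(x)\}_{x\in A}$ is a base of $\tau_{h}$ it suffices to prove that every $d(x)$ is $\tau_{d}$-open, i.e. a union of basic $\tau_{d}$-open sets $h(a)$. The first step is a separation criterion: since $d(x)=Min(\mathfrak{A})\setminus h(x)$, we have $h(a)\subseteq d(x)$ iff $h(a)\cap h(x)=\emptyset$. By Proposition \ref{22hopro}\ref{22hopro3}, $h(a)\cap h(x)=h(a\odot x)$; on $Min(\mathfrak{A})$ one has $h(y)=\emptyset$ iff $y^{\perp}=\{1\}$, combining $h(y)=d(y^{\perp})$ (Proposition \ref{minpro}\ref{minpro3}) with $d(Z)=\emptyset\Leftrightarrow Z\subseteq\bigcap Min(\mathfrak{A})=\{1\}$ (Proposition \ref{oprispeprop}\ref{oprispeprop2} and Corollary \ref{mininters}\ref{mininters2}); and $(a\odot x)^{\perp}=a^{\perp}\cap x^{\perp}$ since $a\odot x\le a,x$. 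Together these give
\[h(a)\subseteq d(x)\iff a^{\perp}\cap x^{\perp}=\{1\}.\]
Consequently $U_{x}=\bigcup\{h(a)\mid a^{\perp}\cap x^{\perp}=\{1\}\}$ is $\tau_{d}$-open with $U_{x}\subseteq d(x)$.

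The remaining inclusion $d(x)\subseteq U_{x}$ is the heart of the matter: for each $P\in d(x)$, that is, each minimal prime with $x\notin P$, I must exhibit $a\in P$ with $a^{\perp}\cap x^{\perp}=\{1\}$. Here minimality is essential and enters through Theorem \ref{mincor}: $x\notin P$ forces $x^{\perp}\subseteq P$ and $P=D(P)$. The model case is when $x^{\perp}$ has a least element $m$, for then $a=m\in x^{\perp}\subseteq P$ works, since $s\in m^{\perp}\cap x^{\perp}$ forces $s\ge m$ and $s\vee m=1$, whence $s=s\vee m=1$. In general one must use the maximality of $P^{c}$ as a $\vee$-closed set missing $\{1\}$ to manufacture, locally at $P$, an element $a\in P$ playing this role. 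I expect precisely this existence step — equivalently, that the $\tau_{h}$-clopen set $d(x)$ is covered by the basic $\tau_{d}$-open sets it contains — to be the main obstacle, since it is the only place where the minimal-prime hypothesis (rather than mere primeness) is indispensable; once it is settled, $d(x)=U_{x}\in\tau_{d}$ and hence $\tau_{h}\subseteq\tau_{d}$.
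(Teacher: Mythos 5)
Your proof of part \ref{minspapro1} is correct and is essentially the paper's own argument: $d(x)$ is open by definition and closed because $hkd(x)=d(x)$ (equivalently $d(x)=h(x^{\perp})$ is a hull) by Proposition \ref{minpro}\ref{minpro3}, and $T_1$ plus zero-dimensionality gives total disconnectedness.

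For part \ref{minspapro2} your reduction is sound --- $h(a)\subseteq d(x)$ iff $h(a\odot x)=\emptyset$ iff $(a\odot x)^{\perp}=a^{\perp}\cap x^{\perp}=\{1\}$ (note the inclusion $a^{\perp}\cap x^{\perp}\subseteq(a\odot x)^{\perp}$ needs \ref{res2}, not merely $a\odot x\leq a,x$) --- but the existence step you flag as ``the main obstacle'' is not merely hard: it is false in general, so no choice of $a$ manufactured from the maximality of $P^{c}$ can close the gap. Indeed, Proposition \ref{minpro}\ref{minpro3} unconditionally gives $h(x)=d(x^{\perp})=\bigcup_{t\in x^{\perp}}d(t)\in\tau_{h}$, so $\tau_{d}\subseteq\tau_{h}$ always holds on $Min(\mathfrak{A})$; if your inclusion $d(x)\subseteq U_{x}$ also held in every residuated lattice, the two topologies would always coincide, which by Theorem \ref{compmin} is equivalent to every residuated lattice being a $\star$-residuated lattice. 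This fails: in the frame $\mathcal{O}(\mathbb{R})$ of open subsets of $\mathbb{R}$ (a residuated lattice with $\odot=\cap$), for $x=\mathbb{R}\setminus\{0\}$ one has $x^{\perp}=\{V\mid 0\in V\}$ and $x^{\perp\perp}=\{x,\mathbb{R}\}$, which is not of the form $y^{\perp}$ for any open $y$. So in such an algebra some minimal prime $P\in d(x)$ admits no $a\in P$ with $a^{\perp}\cap x^{\perp}=\{1\}$, and $(Min(\mathfrak{A});\tau_{h})$ is not compact even though $(Min(\mathfrak{A});\tau_{d})$ is (Theorem \ref{dmincompa}), which alone rules out $\tau_{h}\subseteq\tau_{d}$.

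You should know that the paper's own one-line proof of \ref{minspapro2} commits exactly the slip your careful analysis avoids: it reads $d(x)=h(x^{\perp})$ as membership in $\tau_{d}$, but $h(x^{\perp})=\bigcap_{t\in x^{\perp}}h(t)$ is an (in general infinite) \emph{intersection} of basic $\tau_{d}$-open sets, not a union, so it is not $\tau_{d}$-open. The inclusion that is actually provable --- and the only one the rest of the paper uses coherently (e.g.\ in Theorem \ref{compmin}, where $\tau_{h}\subseteq\tau_{d}$ is derived from the $\star$-hypothesis via $d(x)=h(y)$) --- is the reverse one, $\tau_{d}\subseteq\tau_{h}$, i.e.\ $\tau_{h}$ is finer than $\tau_{d}$, which follows in one line from $h(x)=d(x^{\perp})$. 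Your ``model case'' in which $x^{\perp}$ has a least element is precisely a principality condition of $\star$-type under which your existence step does go through, consistent with Theorem \ref{compmin}; unconditionally, the statement as printed should be regarded as having the inclusion reversed, and your proposal, read as evidence, correctly locates the irreparable point.
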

 \begin{proof}
 \item [\ref{minspapro1}:] Applying Proposition \ref{minpro}\ref{minpro3}, it follows that $d(x)$ is a clopen set for any $x\in A$.
 \item [\ref{minspapro2}:] Applying Proposition \ref{minpro}\ref{minpro3}, it follows that $\{d(x)\}_{x\in A}\subseteq \tau_{d}$ and so $\tau_{h}\subseteq \tau_{d}$.
 \end{proof}

 The notion of $\star$-lattices is introduced by \cite{spe0} as a generalization of distributive pseudo-complemented lattices. This class of distributive lattices are studied extensively by \cite{spe0,spe}.
\begin{definition}\label{qucomresdef}
A residuated lattice $\mathfrak{A}$ is called a $\star$-\textit{residuated lattice} if for any $x\in A$ there exists $y\in A$ such that $x^{\perp\perp}=y^{\perp}$.
\end{definition}

Recalling that for a given topological space $(A;\tau)$ and a subset $X$ of $A$, the subspace topology on $X$ is defined by $\tau _{X}=\{ X\cap U| U\in \tau \}$. It is well-known that a subset of $X$ is open (close) in the subspace topology if and only if it is the intersection of $X$ with an open (close) set in $A$.

\begin{theorem}\label{compmin}
Let $\mathfrak{A}$ be a residuated lattice. The following assertions are equivalent:
\begin{enumerate}
   \item [(1) \namedlabel{compmin1}{(1)}] $\mathfrak{A}$ is a $\star$-residuated lattice;
   \item [(2) \namedlabel{compmin2}{(2)}] $\tau_{h}$ and $\tau_{d}$ coincide on $Min(\mathfrak{A})$;
   \item [(3) \namedlabel{compmin3}{(3)}] $(Min(\mathfrak{A});\tau_{h})$ is compact.
 \end{enumerate}
\end{theorem}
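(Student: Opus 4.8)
The plan is to establish the two equivalences \ref{compmin2}$\Leftrightarrow$\ref{compmin3} and \ref{compmin1}$\Leftrightarrow$\ref{compmin2}, treating the former as a purely topological comparison of two \emph{comparable} topologies and the latter through the clopen base $\{d(x)\}_{x\in A}$ together with the dictionary of Proposition \ref{minpro}. The key preliminary observation is that on $Min(\mathfrak{A})$ one inclusion is automatic: by Proposition \ref{minpro}\ref{minpro3} every basic $\tau_d$-open set equals $h(y)=d(y^{\perp})=Min(\mathfrak{A})\setminus h(y^{\perp})$, the complement of a hull, and is therefore $\tau_h$-open; hence $\tau_d\subseteq\tau_h$ always, and the entire content of \ref{compmin2} lies in the reverse inclusion. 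I would also record two ambient facts for repeated use: $(Min(\mathfrak{A});\tau_d)$ is compact by Theorem \ref{dmincompa}, and $Min(\mathfrak{A})$ is Hausdorff in both topologies by Proposition \ref{hausantich} (with $F=\{1\}$) together with Theorem \ref{hauspro}.

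For \ref{compmin1}$\Rightarrow$\ref{compmin2}, fix $x\in A$; the $\star$-condition supplies $y$ with $x^{\perp\perp}=y^{\perp}$, so Proposition \ref{minpro}\ref{minpro3} and \ref{minpro}\ref{minpro6} give $d(x)=h(x^{\perp})=h(y)$. Thus each basic $\tau_h$-open set $d(x)$ is a basic $\tau_d$-open set, whence $\tau_h\subseteq\tau_d$; combined with the automatic inclusion this yields $\tau_h=\tau_d$. The implication \ref{compmin2}$\Rightarrow$\ref{compmin3} is then immediate, since the equality $\tau_h=\tau_d$ transports the compactness of $(Min(\mathfrak{A});\tau_d)$ from Theorem \ref{dmincompa} to $(Min(\mathfrak{A});\tau_h)$.

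For \ref{compmin3}$\Rightarrow$\ref{compmin2} I would run the standard ``compact-to-Hausdorff'' argument: since $\tau_d\subseteq\tau_h$, the identity map $(Min(\mathfrak{A});\tau_h)\longrightarrow(Min(\mathfrak{A});\tau_d)$ is continuous; its domain is compact by hypothesis and its codomain is Hausdorff, so by Lemma \ref{norhau} it is a closed map, hence (being a continuous closed bijection) a homeomorphism, giving $\tau_h=\tau_d$. The genuinely substantive direction is \ref{compmin2}$\Rightarrow$\ref{compmin1}, and this is where I expect the main work. Assuming $\tau_h=\tau_d$, fix $x\in A$; then $d(x)=h(x^{\perp})$ is $\tau_d$-open, so it is a union of basic sets $h(y_i)$, while as a hull it is $\tau_h$-closed, hence $\tau_d$-closed, hence compact inside the compact space $(Min(\mathfrak{A});\tau_d)$. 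Extracting a finite subcover $d(x)=h(y_1)\cup\cdots\cup h(y_n)$ and collapsing it by Proposition \ref{3hopro}\ref{3hopro2} to a single hull $h(y)$ with $y=y_1\vee\cdots\vee y_n$, I obtain $h(x^{\perp})=h(y)$, and Proposition \ref{minpro}\ref{minpro6} converts this into $x^{\perp\perp}=y^{\perp}$, i.e. the $\star$-condition for $x$. The main obstacle is precisely this last passage: turning the mere $\tau_d$-openness of $d(x)$ into a single algebraic witness $y$. It hinges on two special features available here, namely the compactness of $\tau_d$ (so that an open cover reduces to finitely many pieces) and the fact, peculiar to collections of \emph{prime} filters, that a finite union of hulls is again a hull (Proposition \ref{3hopro}\ref{3hopro2}); without both one would only recover $x^{\perp\perp}$ as an intersection of annihilators rather than as a single $y^{\perp}$.
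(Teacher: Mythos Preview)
Your proof is correct and takes a somewhat different route from the paper's. The paper argues the cycle $(1)\Rightarrow(2)\Rightarrow(3)\Rightarrow(1)$, with the substantive step $(3)\Rightarrow(1)$ done directly in $\tau_h$: since $h(x)$ is closed in the compact space $(Min(\mathfrak{A});\tau_h)$ it is compact, and from $h(x)\cap h(x^{\perp})=\bigcap_{t\in x^{\perp}}(h(x)\cap h(t))=\emptyset$ one extracts $t_1,\dots,t_n\in x^{\perp}$ with $h(x)\cap h(t_1\odot\cdots\odot t_n)=\emptyset$; setting $y=t_1\odot\cdots\odot t_n\in x^{\perp}$ then forces $d(x)=h(y)$ and hence $x^{\perp\perp}=y^{\perp}$. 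You instead interpose (2): first $(3)\Rightarrow(2)$ via the general fact that a continuous bijection from a compact space to a Hausdorff one is a homeomorphism (Lemma \ref{norhau}), and then $(2)\Rightarrow(1)$ by covering the $\tau_d$-compact clopen set $d(x)=h(x^{\perp})$ by basic $\tau_d$-opens $h(y_i)$ and collapsing a finite subcover with $\vee$ (Proposition \ref{3hopro}\ref{3hopro2}) rather than $\odot$. Your detour is more conceptual and highlights the comparison of the two topologies; the paper's direct $(3)\Rightarrow(1)$ is a little more economical. One point worth noting: your identification of the \emph{automatic} inclusion as $\tau_d\subseteq\tau_h$ (via $h(y)=d(y^{\perp})\in\tau_h$) is the one that actually holds and is the one your argument needs, whereas Corollary \ref{minspapro}\ref{minspapro2} records the opposite inclusion; indeed $d(x)=h(x^{\perp})$ is merely an \emph{intersection} of basic $\tau_d$-open sets and need not lie in $\tau_d$ without the $\star$-hypothesis.
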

\begin{proof}
\item \ref{compmin1}$\Rightarrow$\ref{compmin2}: Let $x\in A$. By Proposition \ref{minpro}(\ref{minpro3} and \ref{minpro5}) and hypothesis, it follows that $h(x)=h(x^{\perp\perp})=h(y^{\perp})=d(y)$ for some $y\in A$. It shows that $\tau_{h}$ is finer than $\tau_{d}$. The converse follows by Corollary \ref{minspapro}\ref{minspapro2}.
\item \ref{compmin2}$\Rightarrow$\ref{compmin3}: It is straightforward by Proposition \ref{compapropo}\ref{compapropo2}.
\item \ref{compmin3}$\Rightarrow$\ref{compmin1}: Let $x\in A$. Then $h(x)$ is a closed subset of $(Min(\mathfrak{A});\tau_{h})$ and so it is compact in the subspace topology. By Proposition \ref{minpro}\ref{minpro2}, we have $h(x)\cap h(x^{\perp})=\emptyset$. By Proposition \ref{hukerprop}\ref{hukerprop3} we have
    \[\emptyset=h(x)\cap (\bigcap_{t\in x^{\perp}} h(t))=\bigcap_{t\in x^{\perp}} (h(x)\cap h(t))).\]
    Since for any $t\in x^{\perp}$, $h(x)\cap h(t)$ is a closed subset of $h(x)$ in the subspace topology so for some $t_1,\cdots,t_n\in x^{\perp}$ we have
    \[\emptyset=\bigcap_{i=1}^{n} (h(x)\cap h(t_i)))=h(x)\cap (\bigcap_{i=1}^{n} h(t_i))=h(x)\cap h(\odot_{i=1}^{n} t_i).\]
    Take $y=\odot_{i=1}^{n} t_i$. Since $h(x)\cap h(y)=\emptyset$ so $d(x)\cup d(y)=Min(\mathfrak{A})$. On the other hand, $y\in x^{\perp}$ and it shows that $x\vee y=1$. By Proposition \ref{1adprispeprop}\ref{1adprispeprop2}, it follows that $d(x)\cap d(y)=\emptyset$ and so $d(x)=h(y)$. Using Proposition \ref{minpro}\ref{minpro3}, $h(x^{\perp})=h(y)$ and by Proposition \ref{minpro}\ref{minpro6} follows the result.
\end{proof}

\begin{lemma}\label{cloudis}
Let $\mathfrak{A}$ be a residuated lattice and $X\subseteq A$. If $S=\bigcup_{x\in X} h(x^{\perp})$, then the closure of $S$ is $h(X^{\perp})$.
\end{lemma}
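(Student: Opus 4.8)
The plan is to recognise that, since the ambient topology here is the hull-kernel topology $\tau_{h}$ and $hk$ is its associated topological closure operator (Theorem \ref{clotopo}), the closure of $S$ coincides with $hk(S)=h(k(S))$. Thus the whole problem reduces to computing the kernel $k(S)$ and then applying the hull operator $h$.

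To compute $k(S)$, I would first regard $S=\bigcup_{x\in X}h(x^{\perp})$ as the union of the family $\{h(x^{\perp})\}_{x\in X}$ and use the fact that $k$ converts unions into intersections (Corollary \ref{hukerprop}\ref{hukerprop4}), which gives
\[k(S)=k\Bigl(\bigcup_{x\in X}h(x^{\perp})\Bigr)=\bigcap_{x\in X}kh(x^{\perp}).\]
Next, for each fixed $x\in X$, I would invoke $kh(x^{\perp})=x^{\perp}$, the single-element instance ($X=\{x\}$) of Proposition \ref{minpro}\ref{minpro4}. Finally, since $a\in X^{\perp}$ means exactly that $x\vee a=1$ for every $x\in X$, we have $X^{\perp}=\bigcap_{x\in X}x^{\perp}$, whence $k(S)=\bigcap_{x\in X}x^{\perp}=X^{\perp}$.

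Combining these identities, the closure of $S$ equals $h(k(S))=h(X^{\perp})$, which is the asserted formula.

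As each step is a direct appeal to an already-established identity, I do not anticipate a serious obstacle; the only points demanding a little care are applying the union-to-intersection rule for $k$ to the correct indexing family and reading off $kh(x^{\perp})=x^{\perp}$ as the special case of $kh(X^{\perp})=X^{\perp}$. It is worth emphasising that the argument is genuinely tied to the minimal-prime setting, since Proposition \ref{minpro}\ref{minpro4} (indeed the identity $kd(X)=X^{\perp}$ underlying it) is available precisely because all hulls and kernels here refer to $Min(\mathfrak{A})$.
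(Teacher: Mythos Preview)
Your argument is correct and follows exactly the paper's route: compute $k(S)=\bigcap_{x\in X}kh(x^{\perp})=\bigcap_{x\in X}x^{\perp}=X^{\perp}$ via Proposition \ref{minpro}\ref{minpro4}, and then conclude $\overline{S}=hk(S)=h(X^{\perp})$. The only difference is that you make explicit the appeals to Theorem \ref{clotopo} and Corollary \ref{hukerprop}\ref{hukerprop4}, which the paper leaves implicit.
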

\begin{proof}
By Proposition \ref{minpro}\ref{minpro4} follows that $k(S)=\bigcap_{x\in X} kh(x^{\perp})=\bigcap_{x\in X} x^{\perp}=X^{\perp}$ and so $\overline{S}=hk(S)=h(X^{\perp})$.
\end{proof}

Recalling that a topological space is termed \textit{extremally disconnected} if the closure of every open set in it is open \citep[p. 368]{eng}. An extremally disconnected space that is also compact and Hausdorff is sometimes called a \textit{Stonean space} \citep{stra}.
\begin{proposition}\label{propomohem}
Let $\mathfrak{A}$ be a residuated lattice. $(Min(\mathfrak{A});\tau_{h})$ is an extremally disconnected space if and only if $h(X^{\perp})$ is open for any $X\subseteq A$.
\end{proposition}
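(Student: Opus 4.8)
The plan is to reduce the statement to a direct application of Lemma~\ref{cloudis}, which already computes the closure of a distinguished family of open sets. First I would recall that the open sets of $(Min(\mathfrak{A});\tau_{h})$ are precisely the sets $d(X)$ for $X\subseteq A$, and that each such set decomposes as $d(X)=\bigcup_{x\in X}d(x)$. Invoking Proposition~\ref{minpro}\ref{minpro3}, every basic open set satisfies $d(x)=h(x^{\perp})$, so $d(X)=\bigcup_{x\in X}h(x^{\perp})$, which is exactly the set $S$ appearing in Lemma~\ref{cloudis}.

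The key step is then to read off from Lemma~\ref{cloudis} that $\overline{d(X)}=h(X^{\perp})$ for every $X\subseteq A$; that is, the closure of an arbitrary open set of $Min(\mathfrak{A})$ is always of the form $h(X^{\perp})$. With this identity in hand the equivalence is immediate from the definition of extremal disconnectedness. For the forward implication, if $(Min(\mathfrak{A});\tau_{h})$ is extremally disconnected then for each $X\subseteq A$ the open set $d(X)$ has open closure, and this closure equals $h(X^{\perp})$, whence $h(X^{\perp})$ is open. For the converse, suppose $h(X^{\perp})$ is open for every $X$; given an arbitrary open set $U$, write $U=d(X)$ and note that $\overline{U}=\overline{d(X)}=h(X^{\perp})$ is open by hypothesis, so the closure of every open set is open and the space is extremally disconnected.

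I do not expect a genuine obstacle here: the substantive content---that the closure of the open set $\bigcup_{x\in X}h(x^{\perp})$ equals $h(X^{\perp})$---is already isolated in Lemma~\ref{cloudis}, and the remaining work is simply matching the two families $\{d(X)\}_{X\subseteq A}$ and $\{h(X^{\perp})\}_{X\subseteq A}$ against the definition. The only point requiring a moment's care is to confirm that \emph{every} open set, not merely every basic open set, of $(Min(\mathfrak{A});\tau_{h})$ arises as some $d(X)$; this is guaranteed by the earlier identification $\tau_{h}=\{d(X)\mid X\subseteq A\}$, and it ensures that the stated characterization quantifies over all open sets, exactly as extremal disconnectedness requires.
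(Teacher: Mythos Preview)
Your proposal is correct and follows exactly the paper's approach: the paper's proof is the single line ``It is an immediate consequence of Lemma~\ref{cloudis},'' and you have simply unpacked what that immediacy means by identifying every open set $d(X)$ with the set $S$ of Lemma~\ref{cloudis} (via Proposition~\ref{minpro}\ref{minpro3}) and reading off $\overline{d(X)}=h(X^{\perp})$. There is nothing to add.
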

\begin{proof}
It is an immediate consequence of Lemma \ref{cloudis}.
\end{proof}
\begin{definition}\label{qucomresdef}
A residuated lattice $\mathfrak{A}$ is called a \textit{(countable) $\bigstar$-residuated lattice} if for any (countable) subset $X$ of $A$ there exists $y\in A$ such that $X^{\perp}=y^{\perp}$.
\end{definition}

It is obvious that each $\bigstar$-residuated lattice is a $\star$-residuated lattice, but not the converse.
\begin{proposition}
Let $\mathfrak{A}$ be a residuated lattice. $(Min(\mathfrak{A});\tau_{h})$ is a Stonean space if and only if $\mathfrak{A}$ is a $\bigstar$-residuated lattice.
\end{proposition}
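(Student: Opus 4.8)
The plan is to reduce the Stonean condition to two independent requirements and treat each via the machinery already assembled for $Min(\mathfrak{A})$. Since $Min(\mathfrak{A})=Min_{\{1\}}(\mathfrak{A})$ is always Hausdorff by Proposition \ref{hausantich}, a Stonean space here means precisely a compact, extremally disconnected space. I would control extremal disconnectedness through Proposition \ref{propomohem}, which says that $(Min(\mathfrak{A});\tau_h)$ is extremally disconnected exactly when $h(X^{\perp})$ is open for every $X\subseteq A$, and control compactness through Theorem \ref{compmin}. The bridge between the algebra and the topology will be the identities $d(x)=h(x^{\perp})$ from Proposition \ref{minpro}\ref{minpro3} and $kh(X^{\perp})=X^{\perp}$ from Proposition \ref{minpro}\ref{minpro4}, together with the fact that $\{d(x)\}_{x\in A}$ is a base of $(Min(\mathfrak{A});\tau_h)$.

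For the forward implication, suppose $\mathfrak{A}$ is a $\bigstar$-residuated lattice. Then it is in particular a $\star$-residuated lattice, so $(Min(\mathfrak{A});\tau_h)$ is compact by Theorem \ref{compmin}. To obtain extremal disconnectedness I would fix $X\subseteq A$ and choose $y\in A$ with $X^{\perp}=y^{\perp}$; then $h(X^{\perp})=h(y^{\perp})=d(y)$ by Proposition \ref{minpro}\ref{minpro3}, so $h(X^{\perp})$ is open. Proposition \ref{propomohem} then yields extremal disconnectedness, and combined with compactness and the Hausdorff property we conclude that the space is Stonean.

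For the converse, assume $(Min(\mathfrak{A});\tau_h)$ is Stonean and fix a subset $X$. The set $h(X^{\perp})$ is closed (being a hull) and, by extremal disconnectedness and Proposition \ref{propomohem}, also open; hence it is a clopen, and therefore compact, subset of the compact space $Min(\mathfrak{A})$. I would then cover it by basic open sets $d(z)=h(z^{\perp})$ (using that $\{d(z)\}_{z\in A}$ is a base), extract a finite subcover $d(z_1),\dots,d(z_n)$, and collapse it to a single basic set by repeated use of Proposition \ref{1adprispeprop}\ref{1adprispeprop3}: with $y=\odot_{i=1}^{n}z_i$ one gets $h(X^{\perp})=\bigcup_{i=1}^{n}d(z_i)=d(y)=h(y^{\perp})$. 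Applying the kernel operator and invoking $kh(X^{\perp})=X^{\perp}$ and $kh(y^{\perp})=y^{\perp}$ from Proposition \ref{minpro}\ref{minpro4} gives $X^{\perp}=y^{\perp}$, so $\mathfrak{A}$ is a $\bigstar$-residuated lattice.

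I expect the finite-subcover step in the converse to be the crux: it is where compactness (guaranteed by the Stonean hypothesis) is genuinely used, and it is what turns the merely set-theoretic equality $X^{\perp}=kh(X^{\perp})$ into the existence of a single witness $y$. The remaining manipulations are bookkeeping with the hull/kernel identities already established; the empty-cover case ($h(X^{\perp})=\emptyset$) is handled by taking $y=1$, for which $d(1)=\emptyset=h(1^{\perp})$.
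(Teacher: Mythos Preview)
Your proposal is correct and follows essentially the same strategy as the paper: reduce Stonean to compact plus extremally disconnected, handle the forward direction via Theorem~\ref{compmin} and Proposition~\ref{propomohem}, and for the converse use that $h(X^{\perp})$ is clopen, hence compact, extract a finite basic subcover, collapse it to a single element, and apply Proposition~\ref{minpro}\ref{minpro4}. The only difference is cosmetic: the paper passes to the complement $d(X^{\perp})$ and invokes Theorem~\ref{compmin} to use $\{h(x)\}_{x\in A}$ as a base (since compactness forces $\tau_h=\tau_d$), whereas you work directly with $h(X^{\perp})$ and the standard base $\{d(x)\}_{x\in A}$; your route is marginally more direct and avoids that detour.
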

\begin{proof}
Assume that $(Min(\mathfrak{A});\tau_{h})$ is a Stonean space and $X\subseteq A$. By Lemma \ref{cloudis} and Proposition \ref{propomohem}, $h(X^{\perp})$ is clopen and so $d(X^{\perp})$ is clopen. By Theorem \ref{compmin}, $\{h(x)\}_{x\in A}$ is a base for the topological space $(Min(\mathfrak{A});\tau_{h})$; hence, the open set $d(X^{\perp})$ is a union of such basic open set. Since $d(X^{\perp})$ is a closed set of the compact topological space $(Min(\mathfrak{A});\tau_{h})$ so it is also compact; thus, it is a union of a finite number of such sets. So there exist $x_1,\cdots,x_n\in A$ such that $d(X^{\perp})=\bigcup_{i=1}^{n}h(x_i)=h(\odot_{i=1}^{n}x_i)$. Take, $y=\odot_{i=1}^{n}x_i$; therefore, $d(X^{\perp})=h(y)=d(y^{\perp})$. Thus $h(X^{\perp})=h(y^{\perp})$ and by using Proposition \ref{minpro}\ref{minpro4} we get that $X^{\perp}=y^{\perp}$. The converse is obvious by Theorem \ref{compmin}, Lemma \ref{cloudis} and Proposition \ref{propomohem}.
\end{proof}

Recalling that a maximal filter of the poset $\mathcal{P}(\mathds{N})$ is called an ultrafilter of $\mathds{N}$.
\begin{lemma}\label{lemminperp}
Let $\{\mathfrak{m}_{n}\}$ be a sequence of minimal prime filters of a countable $\bigstar$-residuated lattice, $U$ be an ultrafilter on $\mathds{N}$ and $E(x)=\{n\in \mathds{N}|x\in \mathfrak{m}_{n}\}$. Then the set $\mathfrak{m}_{U}=\{x\in A|E(x)\in U\}$ is a minimal prime filter of $\mathfrak{A}$.
\end{lemma}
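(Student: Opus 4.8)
The plan is to verify, in order, that $\mathfrak{m}_U$ is a proper filter, that it is prime, and that it is minimal, the last step being where the $\bigstar$-hypothesis is genuinely needed. The whole argument rests on transporting the lattice operations through the assignment $x\mapsto E(x)$. Because each $\mathfrak{m}_n$ is an upward-closed, $\odot$-closed filter one has $E(x)\cap E(y)\subseteq E(x\odot y)$ and $E(x)\subseteq E(x\vee y)$, while primeness of each $\mathfrak{m}_n$ together with upward closure upgrades the latter to the equality $E(x\vee y)=E(x)\cup E(y)$. With these in hand, that $\mathfrak{m}_U$ is a proper filter follows directly from $U$ being a proper filter on $\mathcal{P}(\mathds{N})$: $E(1)=\mathds{N}\in U$; if $x,y\in\mathfrak{m}_U$ then $E(x)\cap E(y)\in U$ sits inside $E(x\odot y)$, so $E(x\odot y)\in U$; upward closure of $\mathfrak{m}_U$ is identical via $E(x)\subseteq E(x\vee y)$; and $E(0)=\emptyset\notin U$ gives $0\notin\mathfrak{m}_U$.

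For primeness I would apply Proposition \ref{preqpro}\ref{preqpro3}. If $x\vee y\in\mathfrak{m}_U$ then $E(x)\cup E(y)=E(x\vee y)\in U$, and since $U$ is an ultrafilter, hence a prime filter on the Boolean algebra $\mathcal{P}(\mathds{N})$, either $E(x)\in U$ or $E(y)\in U$; that is, $x\in\mathfrak{m}_U$ or $y\in\mathfrak{m}_U$.

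The substantive step is minimality, which I would establish through the criterion $\mathfrak{m}_U=D(\mathfrak{m}_U)$ of Theorem \ref{mincor}\ref{mincor2} applied with $F=\{1\}$ (legitimate since $\mathfrak{m}_U$ is prime and $1\in\mathfrak{m}_U$). The inclusion $D(\mathfrak{m}_U)\subseteq\mathfrak{m}_U$ is immediate from Proposition \ref{lemdivi}\ref{lemdivi3}, so everything reduces to proving $x\in\mathfrak{m}_U\Rightarrow x^{\perp}\nsubseteq\mathfrak{m}_U$. This is precisely where a naive ultrafilter argument breaks down: one can show $E(x)\cap\bigcap_{t\in x^{\perp}}E(t)=\emptyset$, but since $x^{\perp}$ is infinite an ultrafilter need not respect this intersection, so membership of each $E(t)$ in $U$ yields no contradiction. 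I expect this to be the main obstacle, and the $\star$-property entailed by the $\bigstar$-hypothesis is what overcomes it: pick $y$ with $x^{\perp\perp}=y^{\perp}$. By Proposition \ref{minpro}(\ref{minpro3} and \ref{minpro5}), $h(x)=h(x^{\perp\perp})=h(y^{\perp})=d(y)$, so $h(x)$ and $h(y)$ partition $Min(\mathfrak{A})$; evaluating this partition along the sequence gives $E(y)=E(x)^{c}$. Furthermore $x\in x^{\perp\perp}=y^{\perp}$ forces $x\vee y=1$, i.e. $y\in x^{\perp}$. Hence if $x\in\mathfrak{m}_U$, so $E(x)\in U$, then $E(y)=E(x)^{c}\notin U$ and therefore $y\notin\mathfrak{m}_U$; as $y\in x^{\perp}$ this shows $x^{\perp}\nsubseteq\mathfrak{m}_U$, completing $\mathfrak{m}_U\subseteq D(\mathfrak{m}_U)$ and with it the minimality of $\mathfrak{m}_U$.
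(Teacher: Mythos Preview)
Your argument that $\mathfrak{m}_U$ is a proper prime filter is fine and matches the paper. The gap is in the minimality step: you invoke the $\star$-property (pick $y$ with $x^{\perp\perp}=y^\perp$) and say it is ``entailed by the $\bigstar$-hypothesis'', but the lemma only assumes the \emph{countable} $\bigstar$-property. Full $\bigstar$ does imply $\star$ (apply it to $X=x^\perp$), but countable $\bigstar$ guarantees $X^\perp=y^\perp$ only for \emph{countable} $X$, and $x^\perp$ need not be countable. So the single-element trick that hands you a global complement $y$ with $E(y)=E(x)^c$ is not available under the stated hypothesis, and the clean partition $h(x)=d(y)$ you rely on is unjustified.

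The paper's proof is designed around exactly this obstruction. Given $a\in\mathfrak{m}_U$, minimality of each $\mathfrak{m}_n$ with $n\in E(a)$ produces a witness $x_n\notin\mathfrak{m}_n$ with $a\vee x_n=1$; the set $X=\{x_n\}_{n\in E(a)}$ is countable because it is indexed by a subset of $\mathds{N}$, so countable $\bigstar$ legitimately yields $y$ with $X^\perp=y^\perp$. From $a\in\bigcap_n x_n^\perp=X^\perp=y^\perp$ one gets $a\vee y=1$, and from $y^\perp=X^\perp\subseteq x_n^\perp\subseteq\mathfrak{m}_n$ (Theorem~\ref{mincor} again, applied to $\mathfrak{m}_n$) one gets $y\notin\mathfrak{m}_n$ for every $n\in E(a)$, i.e.\ $E(y)\cap E(a)=\emptyset$, hence $E(y)\notin U$ and $y\in a^\perp\setminus\mathfrak{m}_U$. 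Note the paper only obtains $E(y)\cap E(a)=\emptyset$, not the stronger $E(y)=E(a)^c$ you aim for---but that is all one needs, and it is precisely what countable $\bigstar$ can deliver. Your argument becomes correct once you replace the appeal to $\star$ by this construction of the countable witness set.
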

\begin{proof}
Since $E(x\vee y)=E(x)\cup E(y)$ and $E(x\odot y)\supseteq E(x)\cap E(y)$ so $\mathfrak{m}_{U}$ is a prime filter of $\mathfrak{A}$. Let $a\in \mathfrak{m}_{U}$. So $E(a)\in U$. Thus there exits $x_n\notin \mathfrak{m}_{U}$ such that $a\vee x_{n}=1$ for any $n\in E(a)$. Take $X=\{x_n\}_{n\in E(a)}$; therefore there exists $y\in A$ so that $X^{\perp}=y^{\perp}$. Since $a\in x^{\perp}_{n}$ for any $n\in E(a)$ so $a\in X^{\perp}=y^{\perp}$. It implies that $a\vee y=1$. On the other hand, $y^{\perp}\subseteq x^{\perp}_{n}\subseteq \mathfrak{m}_{n}$ for any $n\in E(a)$, and so $y\notin \mathfrak{m}_{n}$. It follows that $E(y)\cap E(a)=\emptyset$ and it states that $E(y)\notin U$. Consequently, $y\notin \mathfrak{m}_{U}$ and it shows that $a\in D(\mathfrak{m}_{U})$. It holds the result.
\end{proof}

Recalling that a topological space $(A;\tau)$ is called \textit{countably compact} if any countable open cover of $A$ has a finite subcover \citep[p. 202]{eng}. It is well-known that a topological space $(A;\tau)$ is countably compact if and only if every sequence in $A$ has a cluster point \citep[Theorem 3.10.3]{eng}.
\begin{theorem}
Let $\mathfrak{A}$ be a countable $\bigstar$-residuated lattice. then $(Min(\mathfrak{A});\tau_{h})$ is a countably compact space.
\end{theorem}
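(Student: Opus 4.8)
The plan is to use the sequential characterization of countable compactness recalled immediately above: $(Min(\mathfrak{A});\tau_{h})$ is countably compact if and only if every sequence in $Min(\mathfrak{A})$ has a cluster point. So I would fix an arbitrary sequence $\{\mathfrak{m}_{n}\}$ of minimal prime filters and produce a cluster point for it directly, letting Lemma \ref{lemminperp} do the heavy lifting of converting an ultrafilter on $\mathds{N}$ into a genuine minimal prime filter.

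First I would pick a \emph{free} (non-principal) ultrafilter $U$ on $\mathds{N}$; such a $U$ exists since the filter of cofinite subsets extends to an ultrafilter by Zorn's lemma, and the feature I will exploit is that every member of a free ultrafilter is an \emph{infinite} subset of $\mathds{N}$. Writing $E(x)=\{n\in\mathds{N}\mid x\in\mathfrak{m}_{n}\}$ as in Lemma \ref{lemminperp}, that lemma guarantees that $\mathfrak{m}_{U}=\{x\in A\mid E(x)\in U\}$ is a minimal prime filter, hence a point of $Min(\mathfrak{A})$. I claim that $\mathfrak{m}_{U}$ is a cluster point of $\{\mathfrak{m}_{n}\}$.

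Since $\{d(x)\}_{x\in A}$ is a base for $\tau_{h}$, it suffices to test basic neighbourhoods. Suppose $d(x)$ is a basic neighbourhood of $\mathfrak{m}_{U}$, that is, $x\notin\mathfrak{m}_{U}$; by the definition of $\mathfrak{m}_{U}$ this means $E(x)\notin U$, whence $\mathds{N}\setminus E(x)\in U$ because $U$ is an ultrafilter, and freeness of $U$ then forces $\mathds{N}\setminus E(x)$ to be infinite. But $n\in\mathds{N}\setminus E(x)$ says precisely that $x\notin\mathfrak{m}_{n}$, i.e. $\mathfrak{m}_{n}\in d(x)$; thus $d(x)$ contains $\mathfrak{m}_{n}$ for infinitely many indices $n$. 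As this holds for every basic neighbourhood of $\mathfrak{m}_{U}$, the point $\mathfrak{m}_{U}$ is a cluster point of the sequence, and the sequential criterion delivers countable compactness.

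The content here is essentially Lemma \ref{lemminperp}, which is already available, so I anticipate no serious obstacle. The one step that genuinely matters and must not be slurred over is the insistence on a \emph{free} ultrafilter: a principal ultrafilter concentrated at $n_{0}$ would merely return $\mathfrak{m}_{U}=\mathfrak{m}_{n_{0}}$, and then $\mathds{N}\setminus E(x)$ could be finite, so the index set witnessing the cluster-point property might collapse and the argument would break down.
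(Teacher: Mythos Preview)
Your proposal is correct and follows essentially the same approach as the paper: both pick a free ultrafilter $U$ on $\mathds{N}$, invoke Lemma~\ref{lemminperp} to obtain $\mathfrak{m}_{U}\in Min(\mathfrak{A})$, and then verify that $\mathfrak{m}_{U}$ is a cluster point of $\{\mathfrak{m}_{n}\}$. The only cosmetic difference is in the cluster-point check: the paper shows $k(\{\mathfrak{m}_{n}\}_{n\geq m})\subseteq\mathfrak{m}_{U}$ (using that every tail $\{n\geq m\}$ lies in the free ultrafilter) and hence $\mathfrak{m}_{U}\in hk(\{\mathfrak{m}_{n}\}_{n\geq m})=\overline{\{\mathfrak{m}_{n}\}_{n\geq m}}$, whereas you test basic neighbourhoods $d(x)$ directly and use that $\mathds{N}\setminus E(x)\in U$ is infinite; these are dual formulations of the same computation.
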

\begin{proof}
Let $\{\mathfrak{m}_{n}\}$ be a sequence of minimal prime filters of $\mathfrak{A}$ and $U$ be a free ultrafilter on $\mathds{N}$. We show that $\mathfrak{m}_{U}$ is the cluster point of the sequence $\{\mathfrak{m}_{n}\}$. Since $U$ is a free ultrafilter so for any $m\in \mathds{N}$, $\{n\in \mathds{N}\}_{n\geq m}$. Moreover, $x\in k(\{\mathfrak{m}_{n}\}_{n\geq m})=\bigcap_{n\geq m} \mathfrak{m}_{n}$ implies that $\{n\in \mathds{N}\}_{n\geq m}\subseteq E(x)$ and so $E(x)\in U$. Therefore, $k(\{\mathfrak{m}_{n}\}_{n\geq m})\subseteq \mathfrak{m}_{U}$. Consequently, $\mathfrak{m}_{U}\in h(\mathfrak{m}_{U})\subseteq h(k(\{\mathfrak{m}_{n}\}_{n\geq m}))=\overline{\{\mathfrak{m}_{n}\}_{n\geq m}}$. Since it is true for any integer $m$ so $\mathfrak{m}_{U}$ is a cluster point of $\{\mathfrak{m}_{n}\}$.
\end{proof}
\section*{Notes on contributors}

\begin{itemize}
  \item Saeed Rasouli is an assistant professor in the Department of Mathematics at Persian Gulf University, Bushehr, Iran. He obtained his Ph. D from Yazd University, School of Sciences, Department of Mathematics in 2010. His research interests are logical algebras focused on residuated lattices, and hyperstructures.  He has published more than $25$ research papers, especially on algebraic hyperstructures and residuated lattices, in National and International reputed journals.  He has $160$ citations (Google Scholar) with H-index $6$ (Google Scholar).
  \item Amin Dehghani  received the M. Sc degree in the pure mathematics from Iran University of Science and Technology, Tehran, Iran, in 2011. He has received his Ph. D degree in the area of topology from Persian Gulf University, Iran, in 2018.
\end{itemize}


\end{document}